\documentclass[10pt]{article}

\usepackage[utf8]{inputenc}

\usepackage{url}

\usepackage[hidelinks,breaklinks]{hyperref}

\usepackage{geometry}
\usepackage[english]{babel}
\usepackage[backend=biber,style=alphabetic]{biblatex}
\usepackage{diagbox}
\usepackage{mathtools}
\usepackage{amsmath}
\usepackage{amssymb}
\usepackage{mathrsfs}
\usepackage{dsfont}
\usepackage{graphicx}
\usepackage{booktabs}
\usepackage{array}
\usepackage{paralist}
\usepackage{verbatim}
\usepackage{subfig}
\usepackage{tikz}
\usepackage{tikz-cd}
\usetikzlibrary{arrows}
\tikzset{node distance=1.8cm, auto}
\usepackage{csquotes}

\usepackage{lmodern}
\usepackage{sectsty}
\usepackage{amsthm}
\theoremstyle{definition}
\allsectionsfont{\sffamily\mdseries\upshape} 
\newtheorem{theorem}{Theorem}[section]
\newtheorem{proposition}[theorem]{Proposition}
\newtheorem{corollary}[theorem]{Corollary}
\newtheorem{definition}[theorem]{Definition}

\newtheorem{example}[theorem]{Example}

\newtheorem{lemma}[theorem]{Lemma}
\newtheorem{remark}[theorem]{Remark}

\newtheorem{question}[theorem]{Question}

\usepackage{rotating}
\usepackage{graphicx}

\let\phi\varphi
\let\theta\vartheta
\DeclareMathOperator{\Norm}{\operatorname{N}}
\DeclareMathOperator{\Iso}{Iso}

\DeclareMathOperator\Aut{Aut}

\DeclareMathOperator\Tr{Tr}

\DeclareMathOperator\Sym{Sym}

\renewcommand{\:}{\colon}

\renewcommand{\leq}{\leqslant}
\renewcommand{\geq}{\geqslant}

\newcommand{\T}{\mathbf{T}}
\newcommand{\U}{\mathbf{U}}
\renewcommand{\S}{\operatorname{S}}

\newcommand{\bF}{\mathbb{F}}

\newcommand{\F}{\mathbb{F}}
\newcommand{\Z}{\mathbb{Z}}

\renewcommand{\k}{\Bbbk}

\newcommand{\p}{\mathfrak{p}}
\renewcommand{\l}{\mathfrak{l}}

\title{\textsc{The strong Ramanujan bound \\ for Drinfeld modular forms}}

\author{Sjoerd de Vries}
\date{}

\begin{document}
\maketitle
\begin{abstract}
    \noindent

    The Ramanujan bound states that the degree of the trace of the $\mathfrak{p}$-Hecke operator on a space of weight~$k$ Drinfeld cusp forms is at most $d(k-2)/2$, where $\mathfrak{p}$ is a prime of degree~$d$ in $\bF_q[T]$. We prove the strong Ramanujan bound, which states that at level~1, this bound can be improved to $d(k-(q+1))/2$. In addition, we classify when the strong Ramanujan bound is sharp; surprisingly, this turns out to depend on $\mathfrak{p}$ only through the parity of its degree. We also prove results concerning the smallest slope of the Atkin operator and mod~$p$ distributions of Frobenius polynomials of Drinfeld modules over finite fields.
\end{abstract}


\tableofcontents

\section{Introduction}
Drinfeld modular forms of rank~2 are the function field analogues of elliptic modular forms, and can be described as rigid-analytic functions on the Drinfeld upper half-plane with values in a complete algebraically closed field of characteristic~$p$. The theory shares features with that of elliptic modular forms, but is also strikingly different in various aspects.

In previous work of the author, a Ramanujan bound on traces of Hecke operators on Drinfeld modular forms was established. As in the elliptic case, this general (level-agnostic) Ramanujan bound is sharp (see Remark~\ref{rmk:ram-sharp}). However, computations in the case of level $\mathrm{GL}_2(\bF_q[T])$ suggested an improved bound, the \emph{strong Ramanujan bound}, which does not seem to have a classical counterpart. The author previously proved this stronger bound when the Hecke operator is associated to an ideal of degree at most~3, see \cite[Sec.~4]{devries_traces}.

The main objective of the present article is to present a general proof of the strong Ramanujan bound, conjectured in \cite{devries_traces}. At its core, the proof is a statement about the distribution of Frobenius polynomials of rank~2 Drinfeld modules over finite fields, and we prove some additional results in this direction that are of independent interest. In particular, since the size of an isogeny class of Drinfeld modules is described by a class number, the strong Ramanujan bound gives an infinitude of class number relations mod~$p$.




Our main result is as follows. For precise definitions, see Section~\ref{sec:notation}.

\begin{theorem}\label{thm:main_SRB}
    The strong Ramanujan bound holds for all $q$ and all~$\p^n$. In other words, for any prime power $q$, prime ideal $\mathfrak p \trianglelefteq \bF_q[T]$ and $n \geq 1$, we have
    \[
        \deg \Tr(\T^n_{\p} \, | \, \S_{k,l}) \leq  \frac{nd(k-(q+1))}{2} \qquad \text{for all }k,\,l \in \mathbb Z.
    \]
\end{theorem}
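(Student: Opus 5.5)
We take as input the Eichler--Shimura type trace formula from \cite{devries_traces}. It expresses $\Tr(\T^n_{\p}\,|\,\S_{k,l})$, up to an explicit correction term of known degree, as a sum over isomorphism classes of rank~2 Drinfeld modules $\phi$ over $\bF_{\p^n}:=\bF_{q^{dn}}$. Each class is weighted by $1/\#\Aut(\phi)$ together with a character factor in $l$, and by the polynomial
\[
G_{k-2}(a_\phi,b_\phi)=\sum_{i}\binom{k-2-i}{i}(-b_\phi)^i a_\phi^{k-2-2i},
\]
where $X^2-a_\phi X+b_\phi$ is the characteristic polynomial of Frobenius. We have $\deg a_\phi\le nd/2$ (Weil/Riemann hypothesis) and $b_\phi=\epsilon\,\p^n$ with $\epsilon\in\bF_q^\times$. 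Every monomial of $G_{k-2}$ then has degree $\le nd(k-2)/2$, which gives the ordinary Ramanujan bound.

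To improve this by $nd(q-1)/2$, we regroup the sum by the Frobenius trace $a$. Write $G_{k-2}(a,b)$ as a homogeneous form of weight $k-2$ in $(a,\sqrt b)$ and expand it around the \enquote{top} part. The contribution of all $\phi$ with trace $a$ is
\[
H(a)\,G_{k-2}(a,\epsilon\p^n),
\]
where $H(a)\in\bF_p$ is a weighted count, i.e.\ a Hurwitz class number mod~$p$ of the imaginary quadratic order of discriminant $a^2-4\epsilon\p^n$. The key claim is a family of mod~$p$ \enquote{moment vanishing} identities:
\[
\sum_{a}\ \sum_{\phi:\,a_\phi=a} \frac{\chi_l(\phi)}{\#\Aut\phi}\, a^{j}\,\p^{n i}=0 \quad\text{whenever } 2i+j=k-2 \text{ and } i+j>\tfrac{k-(q+1)}{2}\cdot\ldots,
\]
and more precisely that all monomials of degree exceeding $nd(k-(q+1))/2$ cancel in the total sum.

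To prove this we would use that the multiset of pairs $(a_\phi,\text{weight})$ is invariant under $a\mapsto ca$ for $c\in\bF_q^\times$ (twisting by $\bF_q^\times$, which scales $a$ by $c$ and $b$ by $c^2$). Hence $\sum_{c\in\bF_q^\times}c^{j}$ kills all moments with $j\not\equiv 0 \pmod{q-1}$. The remaining top-degree terms, where $\deg a=nd/2$ and $j$ is large, are controlled by counting the Drinfeld modules whose Frobenius has maximal-degree trace. These are the ordinary ones with prescribed leading coefficient, and their weighted number should be $\equiv0 \pmod p$ unless the degree drops by the required $(q-1)$ amount. This step would be proved by expressing the count as a sum over $j$-invariants in $\bF_{\p^n}$ and using that $\sum_{x\in\bF_{\p^n}} x^m=0$ for $0<m$ not divisible by $q^{dn}-1$. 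Concretely, we would rewrite $\sum_\phi a_\phi^m$ as a sum over $(g,\Delta)\in\bF_{\p^n}\times\bF_{\p^n}^\times$ of a polynomial in $g,\Delta$ (the trace of Frobenius being a polynomial expression in the coefficients via the Gekeler/Hsia--Yu formula). Then a degree count of that polynomial in $g,\Delta$, together with the vanishing of power sums, yields the vanishing of the high-degree coefficients.

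The main obstacle is this last step: making the degree bookkeeping exact. Frobenius traces are norms of Frobenius-type expressions, so their polynomial dependence on $(g,\Delta)$ has degree roughly $(q^{dn}-1)/(q-1)$ in each variable. We need to show that in $\sum_{g,\Delta} a(g,\Delta)^j\,\p^{n i}$ every surviving monomial lowers the $T$-degree by at least $nd(q-1)/2$. We would attack it first for $n=1$ by writing $a(g,\Delta)$ as the $T$-coefficient sequence of $\Norm$ of a $\tau$-polynomial and tracking the top $T$-degree coefficients explicitly. We would then pass to general $n$ via $\T_{\p^n}$ compatibility, replacing $\bF_\p$ by $\bF_{\p^n}$ throughout, since the argument only uses the base field $\bF_{q^{dn}}$ and $\deg\p^n=nd$.
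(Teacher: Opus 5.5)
\textbf{There is a genuine gap.} The step you flag as the ``main obstacle'' is essentially the whole proof, and the tools you point toward would not resolve it.

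Your outer framework is the paper's: reduce to vanishing mod $p$ of moments of the Frobenius distribution. Then replace the sum over isomorphism classes by a sum over all $(g,\Delta)\in\bF_{\p^n}\times\bF_{\p^n}^\times$. This is legitimate because each orbit has size $(q^{nd}-1)/\#\Aut(\phi)\equiv 1 \pmod p$, which you should state. Finally, write $a(g,\Delta)$ explicitly and use $\sum_{g}g^E=0$.

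Your intermediate steps do not contribute to the bound.
\begin{itemize}
\item The $\bF_q^\times$-twist only yields the congruence $k\equiv -2t \pmod{q-1}$.
\item Counting modules with maximal-degree trace only sees the leading coefficient $a_N$. Divisibility of that count by $p$ is a weak consequence of the theorem, not an input to it.
\item Your stated vanishing condition is itself incomplete (it trails off with ``$\cdot\ldots$'').
\end{itemize}
To get $\deg W_{k,t}\le nd(k-(q-1))/2$ you must kill all of the top $T$-coefficients of $a^k$ in a range of length about $nd(q-1)/2$. These involve $a_{N-1},a_{N-2},\dots$ as well. In the paper this is organized as the vanishing of all mixed moments $\mu(\mathbf v,t)$ with $2\|\mathbf v\|>nd(|\mathbf v|-(q-1))$ (Proposition~\ref{prop:sufficient}).

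More importantly, ordinary degree in $g$ is the wrong measure. As you note, $a(g,\Delta)$ contains $g^{(q^{nd}-1)/(q-1)}$, so a product of a few copies already exceeds $q^{nd}-1$, and no naive degree count can work. Two ideas are missing.
\begin{itemize}
\item[(i)] Write $a=\Tr\bigl(F\sigma(F)\cdots\sigma^{nd-1}(F)\bigr)$ with $F=\begin{pmatrix}0&\Delta^{-1}(T-\theta)\\ 1&-\Delta^{-1}g\end{pmatrix}$, obtained from the Anderson motive. Expanding this trace over disconnected subsets $S\subset\Z/nd$ shows that $a_h(g,\Delta)$ is a combination of monomials $g^{E(S)}$, where $E(S)=\sum_{j\in U(S)}q^j$ has $q$-digit sum $\#U(S)=nd-2\#S\le nd-2h$. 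This is the precise trade-off between $T$-degree and complexity in $g$: each factor of $T$ uses up two Frobenius positions, while each factor of $g$ uses one.
\item[(ii)] The digit-sum lemma: if $E>0$ and $q^{nd}-1\mid E$, then $s_q(E)\ge nd(q-1)$.
\end{itemize}
Since $s_q$ is subadditive, every $g$-exponent in $\prod_i a_i^{v_i}$ has digit sum at most $\sum_i v_i(nd-2i)$. Under the hypothesis this is $<nd(q-1)$, so every $\sum_g g^E$ vanishes. The $\Delta$-sum plays no role in the degree count. Equivalently, you could apply (i) and (ii) directly to the expansion of $a^k$, as in Proposition~\ref{prop:exact}: each surviving tuple has $\sum_\alpha\#U(S_\alpha)\ge nd(q-1)$, which bounds the $T$-degree.

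One smaller point: general $n$ cannot be obtained from $n=1$ by Hecke compatibility, since the trace form of the bound is weaker than the eigenvalue form. The argument simply has to be run over $\bF_{\p^n}$ directly, which it can be.
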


It is natural to ask whether the strong Ramanujan bound is the best one can do. In Section~\ref{sec:sharpness}, we show that it is: for every $q$ and $\p^n$, it is sharp infinitely often. Since Hecke operators for different primes~$\p$ tend to behave quite differently in general, the following result came as a surprise.

\begin{theorem}
    For fixed $k$ and $l$, whether the strong Ramanujan bound is sharp on $\S_{k,l}$ depends only on the parity of $\deg \p^n$.
    Furthermore, for every $\T_{\p^n}$, the strong Ramanujan bound is sharp in a set of weights with positive density.
\end{theorem}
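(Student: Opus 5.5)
The plan is to use the trace formula from \cite{devries_traces}, which expresses $\Tr(\T_{\p^n}\,|\,\S_{k,l})$ as a sum over isogeny classes of rank~$2$ Drinfeld modules over $\bF_{\p^n}$. Such a class is determined by its Frobenius polynomial $X^2 - aX + b$, with $b = \epsilon\,\p^n$ for $\epsilon\in\bF_q^\times$ and $\deg a \leq nd/2$. Each class contributes a weighted class number $h(a,b)$ times a Hall--Littlewood-type polynomial in the Frobenius eigenvalues, of the shape $\sum_{i} \alpha^{i}\beta^{k-2-i}$ twisted by $b^{l}$, plus a few Eisenstein/degenerate correction terms. Expanding in $a$ and $b$, the term that could have degree $nd(k-2)/2$ is governed by the classes with $\deg a$ maximal. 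The proof of Theorem~\ref{thm:main_SRB} shows that the coefficients of all degrees above $nd(k-(q+1))/2$ vanish. It does this through congruences mod~$p$ for sums $\sum_{\deg a = m} h(a,b)\,a^{j}$: these sums vanish unless $j$ is large relative to $q$. I take both the trace formula and the vanishing statements proved for Theorem~\ref{thm:main_SRB} as given, and extract from them the leading coefficient at degree exactly $nd(k-(q+1))/2$.

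First, write the coefficient of $T^{nd(k-(q+1))/2}$ in the trace as a finite explicit expression. It should depend on the leading coefficients of $a$ (in $\bF_q$), on $\epsilon$, and on the leading coefficient of $\p^n$, and the class numbers enter only through the mod~$p$ sums $\sum_{\deg a = nd/2,\,\mathrm{lc}(a)=c} h(a,\epsilon\p^n)$. The key step is to show that these top-degree class number sums mod~$p$ depend on $\p^n$ only through $\deg \p^n \bmod 2$. When $nd$ is odd, there is no $a$ of degree exactly $nd/2$, so the extremal contribution comes from $\deg a = (nd-1)/2$ together with supersingular-type classes. When $nd$ is even, the relevant sum is a count of Drinfeld modules over $\bF_{\p^n}$ whose Frobenius trace has maximal degree. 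I would compute this count mod~$p$ by counting $j$-invariants, or equivalently $\bF_{\p^n}$-points of the moduli stack with weights $1/\#\Aut$. That count is a polynomial in $Q=q^{nd}$ reduced mod~$p$, hence a constant that may depend on parity. Once this is done, the leading coefficient becomes a universal expression $C_{\pm}(k,l)\in\bF_p$ depending only on $k$, $l$ and the parity. The dependence on $\mathrm{lc}(\p^n)$ should factor out as a nonzero scalar, so sharpness is equivalent to $C_{\pm}(k,l)\neq 0$. This gives the first statement.

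For positive density, compute $C_{\pm}(k,l)$ explicitly. I expect it to be a sum over $\epsilon,c\in\bF_q$ of monomials $c^{i}\epsilon^{l-i}$ times binomial coefficients $\binom{k-2-i}{i}$ mod~$p$. Character sums over $\bF_q^\times$ reduce it to a condition like $k\equiv 2l \pmod{q-1}$ together with a Lucas-theorem condition on a binomial coefficient. The weights satisfying a fixed digit pattern in base~$p$ under a congruence mod $q-1$ have positive density, for example the weights $k$ for which the relevant binomial is $\binom{p^m-1}{\cdot}$-like, which is nonzero by Lucas. The main obstacle is the mod~$p$ evaluation of the top-degree class number sums and the proof that they are independent of $\p$ beyond parity. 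I would try the point-counting route first, showing that $\sum_a h(a,b)$ over maximal-degree $a$ equals $Q$ minus lower-degree contributions that vanish mod~$p$ by the lemmas behind Theorem~\ref{thm:main_SRB}.
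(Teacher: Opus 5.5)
\textbf{There is a genuine gap.} Your key reduction is false. You assert that the coefficient of $T^{nd(k-(q+1))/2}$ sees class numbers only through the leading-coefficient sums $\sum_{\deg a=nd/2,\,\mathrm{lc}(a)=c}h(a,\epsilon\wp^n)$. But the strong bound sits $nd(q-1)/2$ below the naive top degree. By \eqref{eq:W-mu}, the relevant coefficient of $W_{k,t}$ is $\sum\binom{k}{\mathbf v}\mu(\mathbf v,t)$ over all $\mathbf v$ with $|\mathbf v|=k$ and $\sum_i(nd-2i)v_i=nd(q-1)$. For $nd$ even this includes, e.g., $v_0=q-1$, $v_N=k-(q-1)$. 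So mixed moments of the whole coefficient vector $(a_0,\dots,a_N)$ enter, not just $\mathrm{lc}(a)$.

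Worse, the sums you single out carry no information mod~$p$. For $nd$ even, a pure moment $\mu((0,\dots,0,v_N),t)$ has $2\|\mathbf v\|=nd\,v_N>nd(v_N-(q-1))$, so it vanishes by Theorem~\ref{thm:SRB_upgrade}. In fact $g\mapsto a_N(g,\Delta)$ is constant, and the number of classes with prescribed $a_N$ and $b$ is divisible by $q$ (Theorem~\ref{thm:top-invisibility} with $S=\{N\}$, cf.\ \eqref{eq:gekeler_counts}). So if your reduction were right, the bound would never be sharp for $nd$ even, contradicting the positive-density half of the statement. Similarly, a point count that is a polynomial in $Q=q^{nd}$ reduces mod~$p$ to its constant term, so that route does not by itself explain any parity dependence.

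\textbf{What is missing} is a way to compute the full boundary coefficient, including the alternating sum over $j$ in the trace formula. The paper does this in the following steps.
\begin{itemize}
\item It sums over $(g,\Delta)\in\k\times\k^\times$ rather than isogeny classes (Lemma~\ref{lem:domain}) and expands $a(g,\Delta)$ over disconnected subsets of $\Z/nd$ (Corollary~\ref{cor:rewrite}).
\item Orthogonality plus the digit-sum lemma show that at the boundary degree only tuples whose coverage function $f$ is $R(k)$-balanced survive. Each contributes $\omega_k(f)$ times the monic polynomial $\prod_i(T-\theta^{q^i})^{f(i)}$, which is exactly how $\p$ drops out (Proposition~\ref{prop:W-top-coeff}).
\item The alternating sum over $j$ collapses via $\Tr\Sym^k\Phi$ (Proposition~\ref{prop:walk-identity}) to $\prod_j\binom{f(j)+q-1}{q-1}=\prod_jB(f(j))$.
\item Parity enters because balanced functions on $\Z/nd$ are $2$-periodic: constant when $nd$ is odd, and of the form $f^{(c)}$ when $nd$ is even. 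In the even case $(B(c)B(R(k)-c))^{nd/2}\in\{0,1\}$, independent of $nd/2$.
\end{itemize}
This yields the explicit criterion of Theorem~\ref{thm:sharpness-criterion}, from which both assertions are read off. Your guessed shape for $C_\pm(k,l)$, a character sum in $(c,\epsilon)$ with binomial weights, is not derived from anything. So the positive-density claim is unsupported as well.
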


We can in fact precisely classify the weights in which the bound is sharp; see Theorem~\ref{thm:sharpness-criterion} for the exact statement. The proof is a computation of the top degree coefficient of the trace, and uncovers a connection between Drinfeld modular forms and closed walks in Kneser graphs (see Remark~\ref{rmk:kneser}).

When $\p$ is a prime of degree~1, our formulas take a particularly nice shape, allowing us to also compute a dual bound through which we can investigate the smallest $\p$-adic slope of the Hecke operator. 

The ideas going into these proofs are applicable more broadly to the study of the distribution mod~$p$ of isogeny classes of Drinfeld modules, which we study in Section~\ref{sec:distributions}, complementing earlier results of Gekeler \cite{gek_frobdist}. The main results in this direction are Theorems~\ref{thm:bottom-uniformity} and~\ref{thm:top-invisibility}.
They can be summarized as follows: 

\begin{theorem}
    The number of isomorphism classes of rank 2 Drinfeld modules over $\bF_{\p^n}$ whose Frobenius polynomial $X^2-aX+b\wp^n$ has $a_0=\alpha$ and $b=\beta$ is $\equiv 1 \pmod p$. The number of those whose top $j$ coefficients $a_N,\dots,a_{N-j+1}$ and $b$ are prescribed is divisible by $q$ provided $j^2<nd$; for $nd$ odd, it is divisible by an explicit higher power $q^{\mu}$. More general sets of prescribed coefficients are also allowed.
\end{theorem}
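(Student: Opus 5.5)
We reduce both statements to counting problems via the description of isomorphism classes of rank~2 Drinfeld modules over $\bF_{\p^n}$ by their Frobenius polynomials. By Drinfeld--Honda--Tate theory, the isogeny classes correspond to Weil polynomials $X^2-aX+b\wp^n$ with $b\in\bF_q^\times$, $\deg a\le nd/2$, subject to the usual local conditions. Within one isogeny class the number of isomorphism classes, weighted suitably by automorphisms, is a Hurwitz-type class number $H(a^2-4b\wp^n)$ of the imaginary quadratic order. Summing over the isogeny classes with prescribed coefficients gives a sum of class numbers. The plan is to evaluate this sum mod~$p$, and mod powers of~$q$, without computing individual class numbers. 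We count the Drinfeld modules directly: over $\bF_{\p^n}$ a rank~2 module is $\phi_T=\theta+g\tau+\Delta\tau^2$, and isomorphism classes correspond to orbits of $(g,\Delta)$ under $\bF_{\p^n}^\times$ acting by $(u^{q-1}g,u^{q^2-1}\Delta)$. We then express the Frobenius coefficients $a,b$ as polynomial functions of $(g,\Delta)$.

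For the first assertion, fixing $a_0=\alpha$ and $b=\beta$, we proceed as follows.
\begin{enumerate}
\item Compute $b$ explicitly. It is determined by the norm of $\Delta$: $b=(-1)^{nd}N(\Delta)^{-1}$ up to a fixed normalisation. So prescribing $b$ fixes the coset of $\Delta$ modulo norm-one elements.
\item Compute $a_0=a(0)$. It is a polynomial identity in $g,\Delta$ obtained by reducing Frobenius modulo the ideal $(T)$, that is, from the action of $\phi_T$ near $T=0$, and we make it explicit.
\item Count solutions $(g,\Delta)$ of the resulting equations over $\bF_{\p^n}$ using Chevalley--Warning. The count is $\equiv 0\pmod p$ in the ``affine'' part, with the correction coming from the point $g=0$ or from orbits with extra automorphisms.
\item Divide by the orbit sizes. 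Generic orbits have size $|\bF_{\p^n}^\times|/(q-1)\equiv 1\pmod p$ up to units, so the count is determined mod~$p$ by the special orbits (those with $j=0$). These contribute exactly~$1$ after the automorphism weighting.
\end{enumerate}
The delicate bookkeeping is the automorphism weighting, since the statement counts isomorphism classes rather than the weighted sum.

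For the second assertion, prescribing the top coefficients $a_N,\dots,a_{N-j+1}$, we use instead that the top coefficients of $a$ are controlled by the coefficients of the characteristic polynomial of the Frobenius $\tau^{nd}$ acting on the $\p$-adic or $T$-adic Tate module. Equivalently, they are controlled by the coefficients of $\phi_{\wp^n}$ of highest $\tau$-degree, viewed as functions of $(g,\Delta)$ after normalising $\Delta$. Prescribing $j$ top coefficients imposes $j$ polynomial equations whose degrees grow roughly linearly in the index, with total degree about $j^2$ relative to the $nd$ variables obtained by Weil restriction of $g$ from $\bF_{\p^n}$ to $\bF_q$. The Ax--Katz theorem then gives divisibility of the solution count by $q^{\lceil (nd-\sum\deg)/\max\deg\rceil}$, which is at least $q$ when $j^2<nd$. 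The refined power $q^\mu$ for $nd$ odd should come from an extra symmetry: with $nd$ odd there are no supersingular-type degenerate classes, and the middle coefficient is forced, so one fewer equation is effectively imposed. The same Weil-restriction argument handles more general prescribed coefficient sets, with degrees summed accordingly.

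The main obstacle will be the degree analysis in the second part. We must show that the $i$-th top coefficient of $a$, as a function of the $\bF_q$-coordinates of $g$, has degree at most about $2i-1$, and that the orbit quotient by $\bF_{\p^n}^\times$ does not destroy the $q$-divisibility. To get the degree bound, I would first express $a$ via the recursion for $\phi_{T^m}$ and track $q$-adic digit sums, since Ax--Katz is sensitive to $\deg$ in the Weil-restricted variables. For the quotient, the orbit sizes are prime to~$p$, so dividing by them preserves the $q$-power divisibility.
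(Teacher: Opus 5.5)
Both halves of your proposal have a genuine gap.

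\textbf{The congruence $\equiv 1 \pmod p$.} The mechanism in your steps 3--4 is wrong.

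First, Chevalley--Warning does not apply. In the $nd$ coordinates of $g$ over $\bF_q$, the function $g\mapsto a_0(g,\Delta)$ has degree exactly $nd$. The reason is that the $S=\emptyset$ term of the expansion of $a=\Tr(F\sigma(F)\cdots\sigma^{nd-1}(F))$ in Corollary~\ref{cor:rewrite} is $(-1)^{nd}\Norm(g)\Norm(\Delta)^{-1}$.

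Second, the ``$1$'' does not come from the $j=0$ classes. Both possible orbit sizes, $\frac{q^{nd}-1}{q-1}$ and $\frac{q^{nd}-1}{q^2-1}$, are $\equiv 1 \pmod p$, so by Lemma~\ref{lem:domain} the class count is simply congruent to the number of pairs $(g,\Delta)$. Moreover, for $nd$ odd, $g=0$ forces $a=0$, so for $\alpha\neq0$ there are no $j=0$ classes with $a_0=\alpha$ at all.

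The paper avoids counting altogether. By Lemma~\ref{lem:determine}, the function $(\alpha,\beta)\mapsto(\text{count}\bmod p)$ is determined by the moments $\mu((k,0,\dots,0),t)$. Lemma~\ref{lem:low_moments}, which comes from low-weight Hecke data, shows these moments agree with those of the constant function~$1$.

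A direct count can be salvaged, but by a different mechanism from yours. The function $a_0(\cdot,\Delta)-\alpha$ equals $(-1)^{nd}\Norm(\Delta)^{-1}\Norm(g)$ plus terms of degree $\leq nd-2$. Hence in $\sum_g(a_0(g,\Delta)-\alpha)^{q-1}$ all cross terms have degree $<nd(q-1)$ and sum to $0$, leaving $\sum_{g\neq0}1=-1$. So for \emph{each} $\Delta$, the number of $g$ with $a_0(g,\Delta)=\alpha$ is $\equiv 1\pmod p$. Summing over the $\frac{q^{nd}-1}{q-1}$ values of $\Delta$ with $b(\Delta)=\beta$ and applying Lemma~\ref{lem:domain} finishes the argument. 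This route needs the explicit formula for $a(g,\Delta)$, which your step~2 leaves unspecified.

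\textbf{The $q$-divisibility.} Your skeleton (Weil restriction of $g$, then Ax--Katz, then passage to classes) is the paper's. However, the two load-bearing steps are missing or wrong.

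The degree bound, which you flag as the main obstacle, is exactly what the Anderson motive provides. Set $u_i=\Delta^{-q^i}(T-\theta^{q^i})$ and $w_i=-\Delta^{-q^i}g^{q^i}$. Then $a$ is a sum over disconnected $S\subset\Z/nd$ of $\prod_{i\in S}u_i\prod_{j\in U(S)}w_j$. So $a_h$ involves only $g^{\sum_{j\in U(S)}q^j}$ with $\#U(S)=nd-2\#S\leq nd-2h$. Since each $g\mapsto g^{q^j}$ is $\bF_q$-linear, $\deg a_h\leq nd-2h$ (Proposition~\ref{prop:coeff-is-pol}).

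Your quotient step (``orbit sizes are prime to $p$, so dividing preserves $q$-power divisibility'') is false for $nd$ even. There the pair count is $\frac{q^{nd}-1}{q^2-1}h_0+\frac{q^{nd}-1}{q-1}(h-h_0)$, with two \emph{different} unit factors. Knowing that $q^{\mu}$ divides it only yields $q\mid h$, because both factors are $\equiv 1\pmod q$. This cannot be improved. For $n=1$, $d$ even and $S=\{N\}$, the pair count is divisible by $q^d$, since $a_{d/2}$ is constant in $g$. Yet $\#H_-(\p)$ in \eqref{eq:gekeler_counts} is divisible by exactly $q$.

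So the parity distinction comes solely from $\#\Aut(\phi)$. For $nd$ odd, all orbits have size $1+q+\dots+q^{nd-1}$, whence $d(\boldsymbol\alpha,\beta)=(1+q+\dots+q^{nd-1})\,h(\boldsymbol\alpha,\beta)$. It does not come from a ``forced middle coefficient'': it is for $nd$ \emph{even} that $a_{nd/2}$ is constant in $g$. Nor does it come from an absence of supersingular classes: Example~\ref{ex:class-number-rels} has $nd=3$ and the supersingular class $a=0$.
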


\subsection{Notation and conventions}\label{sec:notation}
We fix a prime power $q$ and set $A = \bF_q[T]$, with fraction field $K = \bF_q(T)$ of characteristic~$p$. Throughout, $\wp \in A$ will denote a monic irreducible polynomial of degree~$d$ and $n \geq 1$ will be an integer. We write $\p = (\wp)$, $\bF_{\p} := A/\p$, and $\bF_{\p^n}$ for the degree~$n$ field extension of $\bF_{\p}$ (so that $|\bF_{\p^n}| = q^{nd}$). We write $0^0 = 1$, $\deg 0 = - \infty$, and denote by $[X^D]f$ the coefficient of~$X^D$ in a polynomial~$f = f(X)$.

\subsection{Background}

\subsubsection*{Drinfeld modules}

Let $\k$ be a finite field together with an $\bF_q$-algebra homomorphism $\gamma\: A \to \k$.  If $\k = \bF_{\p^n}$, we will assume that $\gamma$ is of the form $A \twoheadrightarrow \bF_{\p} \hookrightarrow \bF_{\p^n}$. Write $\theta := \gamma(T) \in \k$. A Drinfeld module of rank~2 over~$\k$ is then an $\bF_q$-algebra homomorphism
\[
\phi: A \longrightarrow \k\{\tau\}, \qquad T \longmapsto \phi_T := \phi(T) = \theta + g\tau + \Delta\tau^2,
\]
with $\Delta \neq 0$. For fixed $(g,\Delta) \in \k \times \k^\times$, we define\footnote{Note that $\k$ is not determined by $g$ and $\Delta$, so it may be ambiguous over which field $\phi^{(g,\Delta)}$ is considered. In this paper, the base field is always of the form $\bF_{\p^n} \cong \bF_{q^m}$, so this should not cause confusion.} the Drinfeld module $\phi^{(g,\Delta)}$ through the formula $\phi_T = \theta + g\tau + \Delta\tau^2$. If $\phi$ is a Drinfeld module over~$\k$, we denote its $\k$-isomorphism class by $[\phi]$.

Each $\phi$ as above has an associated \emph{Frobenius polynomial} $c_\phi(X) \in A[X]$. If $\k = \bF_{\p^n}$, then $c_\phi(X) = X^2 - a_\phi X + b_\phi \wp^n$ for some $a_\phi \in A$ and $b_\phi \in \bF_q^\times$. If $\phi = \phi^{(g,\Delta)}$, then we also write $a(g,\Delta) := a_\phi$, $b(g,\Delta) := b_\phi$. Moreover, we write $a = \sum_{i=0}^{\deg(a)} a_iT^i$, $a_\phi = \sum_{i=0}^{\deg(a_\phi)} a_{i,\phi}T^i$, and
\[
a(g,\Delta) = \sum_{i \geq 0} a_i(g,\Delta)T^i.
\]
Given $a \in A$ and $b \in \bF_q^\times$, we denote by $\Iso_{\p^n}(a,b)$ the \emph{isogeny class} associated to $(a,b)$, defined as the set of isomorphism classes of Drinfeld modules $\phi$ with $c_\phi(X) = X^2 - aX + b\wp^n$.

\begin{remark}\label{rmk:class-numbers}
    The numbers $\# \Iso_{\p^n}(a,b)$ are given by class numbers of orders in quadratic extensions of~$K$ or quaternion algebras over~$K$. For instance, in the ordinary case, $\# \Iso_{\p^n}(a,b)$ is the Hurwitz class number of the order $A[\pi]$ in the imaginary quadratic extension $K(\pi) = K[X]/(X^2-aX+b\wp^n)$, cf.~\cite[Appendix~A]{devries_traces}.
\end{remark}

\subsubsection*{Drinfeld modular forms}
Given integers $k$ and $l$, we write $\S_{k,l}$ for the space of Drinfeld cusp forms of level $\mathrm{GL}_2(\bF_q[T])$, weight $k$ and type $l$. It comes equipped with a Hecke action; the Hecke operator associated to $\p$ is denoted $\T_{\p}$, weight and type being implicit.

The Hecke trace formula then states that
\[
\Tr(\T_{\p}^n \, | \, \S_{k+2,l}) = \sum_{j = 0}^{\lceil k/2 \rceil - 1} (-1)^j \binom{k-j}{j} \sum_{a,b} \# \Iso_{\p^n}(a,b) a^{k-2j}b^{j+l-k-1}\wp^{nj}.
\]
The strong Ramanujan bound is defined and studied in \cite[Sec.~4]{devries_traces}. Given $\mathfrak p^n$, it states that
\[
\deg \Tr(\T^n_{\p} \, | \, \S_{k,l}) \leq  \frac{nd(k-(q+1))}{2} \qquad \text{for all }k,\,l.
\]
We will also consider the following related quantities: for integers $k \geq 0$ and $t \in \Z$, write
\[
W_{k,t} := \sum_{a,b} \#\Iso_{\p^n}(a,b)a^kb^t \in \bF_q[T],
\]
and note that $W_{k,t} \neq 0$ only if $k \equiv -2t \pmod{q-1}$ by \cite[Rmk.~2.22]{devries_traces}.
For $N = \lfloor nd/2 \rfloor$ and $\mathbf{v} \in \Z_{\geq 0}^{N+1}$, write
\[
\mu(\mathbf{v},t) = \sum_{a,b} \#\Iso_{\p^n}(a,b) a_0^{v_0}a_1^{v_1}\cdots a_N^{v_N}b^t \in \bF_q.
\]
We also write $|\mathbf{v}| := \sum v_i$ and $||\mathbf{v}|| := \sum i v_i$. Note that $W_{k,t}$ and $\mu(\mathbf{v},t)$ can be viewed as moments of the distribution defined by $\Iso_{\p^n}$.

Since traces of Hecke operators are explicit $\Z[\wp^n]$-linear combinations of the $W_{k,t}$, the strong Ramanujan bound is equivalent to the statement that
\[
\deg W_{k,t} \leq \frac{nd(k-(q-1))}{2} \qquad \text{for all } k,\,t.
\]

The relationship with the $\mu(\mathbf{v},t)$ is as follows:
\begin{equation}\label{eq:W-mu}
W_{k,t}\;=\;\sum_{m=0}^{kN}T^{m}\!\!\sum_{\substack{\mathbf{v}\,:\,|\mathbf{v}|=k\\ \|\mathbf{v}\|=m}}\!\!
\binom{k}{v_0,v_1,\dots,v_N}\,\mu(\mathbf{v},t).
\end{equation}


This yields the following sufficient condition for the strong Ramanujan bound to hold.

\begin{proposition}[{\cite[Prop.~4.7]{devries_traces}}]\label{prop:sufficient}
    Suppose $\mu(\mathbf{v},t) = 0$ 
    whenever $2 ||\mathbf{v}|| > nd(|\mathbf{v}| -(q-1))$. Then the strong Ramanujan bound holds for $\p^n$.
\end{proposition}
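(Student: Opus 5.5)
This follows directly from the expansion \eqref{eq:W-mu} together with the reduction of the strong Ramanujan bound to a degree bound on the moments $W_{k,t}$. Recall from the preceding discussion that $\Tr(\T_{\p}^n \mid \S_{k+2,l})$ is a $\Z[\wp^n]$-linear combination of the terms $W_{k-2j,\,t}\,\wp^{nj}$. It therefore suffices to show
\[
\deg W_{k,t} \le \frac{nd(k-(q-1))}{2} \qquad\text{for all } k\ge 0,\ t\in\Z.
\]
To see that this suffices, take the summand with index $j$. Its degree is at most
\[
\frac{nd(k-2j-(q-1))}{2}+ndj=\frac{nd((k+2)-(q+1))}{2},
\]
and this is exactly the strong Ramanujan bound in weight $k+2$. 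Since a sum of polynomials has degree at most the maximum of the degrees of its summands, the trace bound follows.

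For the bound on $W_{k,t}$, fix $k$ and $t$ and expand using \eqref{eq:W-mu}. The coefficient of $T^m$ in $W_{k,t}$ is
\[
\sum_{|\mathbf v|=k,\ \|\mathbf v\|=m}\binom{k}{v_0,\dots,v_N}\mu(\mathbf v,t).
\]
This expansion is valid for the following reason. Every $a$ appearing with $\#\Iso_{\p^n}(a,b)\neq 0$ satisfies $\deg a\le nd/2$ by the Riemann hypothesis for Drinfeld modules, so $\deg a \le N$. Hence $a=\sum_{i=0}^N a_iT^i$. Raising this to the $k$-th power by the multinomial theorem and collecting the powers of $T$ gives exactly the formula above.

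Now suppose $m>nd(k-(q-1))/2$. Then every $\mathbf v$ appearing in the sum for $T^m$ satisfies $2\|\mathbf v\|=2m>nd(|\mathbf v|-(q-1))$. By hypothesis $\mu(\mathbf v,t)=0$ for each such $\mathbf v$, so the whole coefficient vanishes. Therefore every nonzero coefficient of $W_{k,t}$ sits in degree $m\le nd(k-(q-1))/2$, which is the required bound.

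No step here is a genuine obstacle. The only point needing care is the equivalence between the trace bound and the $W_{k,t}$ bound, which amounts to the shift of weight by $2$ in the trace formula together with the factor $\wp^{nj}$. The case $k<q-1$ needs no separate treatment: there the hypothesis forces every $\mu(\mathbf v,t)$ with $|\mathbf v|=k$ to vanish (since $\|\mathbf v\|\ge 0$), so $W_{k,t}=0$ and its degree is $-\infty$.
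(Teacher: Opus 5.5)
Your proposal is correct and is essentially the argument the paper has in mind. You bound each summand $W_{k-2j,\,j+l-k-1}\wp^{nj}$ of the trace formula using $\deg W_{k,t}\le nd(k-(q-1))/2$, with the factor $\wp^{nj}$ exactly compensating the drop from $k$ to $k-2j$, and you obtain that degree bound by reading off coefficients in \eqref{eq:W-mu}, which is valid because $\deg a\le N$ (your separate remark on $k<q-1$ is harmless but already covered by the general coefficient argument).
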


We will prove the strong Ramanujan bound by showing that the condition in Proposition~\ref{prop:sufficient} is always satisfied. 

\subsection{Acknowledgments}
I am grateful to Bjorn Poonen for his suggestion to look into the Chevalley--Warning theorem and its generalizations, which improved Theorem~\ref{thm:top-invisibility} from a mod~$p$ congruence to a $p$-power congruence.

\subsubsection*{Generative AI statement}
The author acknowledges discussions with the AI models Claude Opus~4.8 and Claude Fable~5 in the preparation of this article, especially for developing proof strategies and proofreading. None of the text in this article is written by AI and the author takes sole responsibility for the contents.

\section{Proving the strong Ramanujan bound}\label{sec:srb-proof}
In this section, we will deduce Theorem~\ref{thm:main_SRB} from the following result.

\begin{theorem}\label{thm:SRB_upgrade}
    Let $N = \lfloor nd/2 \rfloor$. Let $\mathbf{v} =(v_0,\ldots,v_N)$ be a tuple of nonnegative integers and let $t \in \mathbb Z$. Assume that $2 ||\mathbf{v}|| > nd \left( |\mathbf{v}| - q + 1 \right)$. Then we have
    \[
     \mu(\mathbf{v},t) = \sum_{a \in A} \sum_{b \in \bF_{q}^\times} \#\text{Iso}_{\mathfrak p^n}(a,b) a_0^{v_0}a_1^{v_1}\cdots a_N^{v_N} b^t = 0.
    \]
\end{theorem}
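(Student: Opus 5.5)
Rather than sum over isogeny classes, I will rewrite $\mu(\mathbf{v},t)$ as a sum over all Drinfeld modules $\phi^{(g,\Delta)}$ with $(g,\Delta)\in \bF_{\p^n}\times\bF_{\p^n}^\times$. An isomorphism class $[\phi]$ contains exactly $|\bF_{\p^n}^\times|/|\Aut(\phi)|$ pairs $(g,\Delta)$. This number is $\equiv -1/|\Aut\phi| \pmod p$, and $|\Aut\phi|$ is $q-1$ or $q^2-1$, hence prime to $p$. The exceptional classes with $g=0$ (extra automorphisms) are then handled as a separate, explicitly computable term. In this way $\mu(\mathbf{v},t)$ becomes, up to a unit,
\[
\sum_{g\in\bF_{\p^n}}\sum_{\Delta\in\bF_{\p^n}^\times} a_0(g,\Delta)^{v_0}\cdots a_N(g,\Delta)^{v_N}\, b(g,\Delta)^t ,
\]
plus a correction from $g=0$. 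The goal is then a Chevalley--Warning/Ax--Katz type vanishing: a sum over $\bF_{q^m}$-points, $m=nd$, of a polynomial function vanishes when its degree is small relative to the number of variables.

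The key input is an explicit polynomial description of the coefficients $a_i(g,\Delta)$. Using $\phi_{\wp^n}$ together with the relation $\pi^2 - a\pi + b\wp^n=0$ in $\k\{\tau\}$ with $\pi=\tau^{m}$, one computes $a$ as a norm-type expression. I will identify $g,\Delta$ with $m$-tuples over $\bF_q$ by restriction of scalars along a basis of $\bF_{q^m}/\bF_q$. Then each $a_i$ is a polynomial over $\bF_q$ in the $2m$ coordinates. I expect the top coefficients to have small degree: heuristically $a_i$ has degree about $2(m-2i)$ or less in the coordinates, since $a$ has degree $\le m/2$ and $a_N$ should be expressed via $\Delta$-norm-like terms of low weight. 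Similarly $b=(-1)^m N(\Delta)^{-1}$ up to sign, which I treat via the substitution $\Delta\mapsto$ norm and the fact that $b^t$ depends only on $t \bmod (q-1)$. Concretely, I aim for the weighted bound $\deg_{\bF_q} a_i \le m - 2i$ per Frobenius-twisted coordinate block, i.e. total degree of $a_0^{v_0}\cdots a_N^{v_N}$ at most $\sum_i v_i(m-2i)=m|\mathbf{v}|-2\|\mathbf{v}\|$.

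Granting this, the hypothesis $2\|\mathbf{v}\|>m(|\mathbf{v}|-q+1)$ says exactly that this degree is $< m(q-1)$. A standard lemma states that $\sum_{x\in\bF_q^{M}} x^{e}=0$ unless every exponent is a positive multiple of $q-1$. Applied to the $m$ coordinates of $g$, this forces vanishing once the degree in those coordinates is below $m(q-1)$. For the $\Delta$-part, I first sum over $\Delta$ in a fixed norm fibre, using that $a$ depends on $\Delta$ only through its orbit structure, to reduce to the same kind of count. The $g=0$ correction and the multiplicity units should contribute terms with the same degree bound, so they vanish as well.

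The main obstacle is the degree bound on $a_i(g,\Delta)$ as polynomials over $\bF_q$ with the right weighting by $i$. The naive bound from the recursion for $\phi_{\wp^n}$ is far too weak, so one needs cancellation. To get it, I would first compute $a$ via the characteristic polynomial of Frobenius acting on the $\bF_q[T]$-module of the $T$-adic (or a generic $\l$-adic) Tate module. Equivalently, $a$ is the trace of the matrix product $\prod_{j=0}^{m-1}\begin{pmatrix}0 & -\Delta^{q^j}\\ 1 & (T-\theta^{q^j}) /\ldots\end{pmatrix}$-style $2\times2$ companion matrices in the motive. Expanding this trace, the coefficient of $T^i$ is a sum over choices of $i$ factors. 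Tracking which Frobenius twists of $g,\Delta$ occur in each coefficient should yield the bound.
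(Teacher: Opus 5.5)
Your overall architecture matches the paper's:
\begin{itemize}
\item pass from isomorphism classes to pairs $(g,\Delta)$;
\item show that for fixed $\Delta$ the map $g\mapsto a_i(g,\Delta)$ has degree at most $m-2i$ in the $\bF_q$-coordinates of $g$, where $m=nd$;
\item kill the $g$-sum by orthogonality, since $\sum_i v_i(m-2i)<m(q-1)$.
\end{itemize}
The paper phrases the last step via exponents $g^E$ and the digit-sum bound $s_q(E)\geq m(q-1)$ for $0<E\equiv 0\pmod{q^m-1}$, which is equivalent to your coordinate version. However, the degree bound, which you yourself flag as the main obstacle, is never established, and the matrix you sketch would not give it: it contains no $g$ and puts $T$ on the diagonal. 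No cancellation is needed.

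In the motive $\k\{\tau\}$ with basis $1,\tau$, the relation $\phi_T=\theta+g\tau+\Delta\tau^2$ gives $\tau^2=\Delta^{-1}(T-\theta)-\Delta^{-1}g\,\tau$. Hence $\tau_M=F\sigma$ with $F=\begin{pmatrix}0&\Delta^{-1}(T-\theta)\\ 1&-\Delta^{-1}g\end{pmatrix}$, and $a=\Tr\bigl(F\sigma(F)\cdots\sigma^{m-1}(F)\bigr)$. Since the $(1,1)$-entry is $0$, expanding the trace gives a sum over disconnected $S\subset\Z/m$ of $\prod_{i\in S}\Delta^{-q^i}(T-\theta^{q^i})\prod_{j\in U(S)}\bigl(-\Delta^{-q^j}g^{q^j}\bigr)$. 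Every $T$-carrying factor forces its predecessor to be the entry $1$, so $\#U(S)=m-2\#S$. The $T^i$-coefficient therefore involves only $S$ with $\#S\geq i$. That is, it is a sum of products of at most $m-2i$ of the $\bF_q$-linear maps $g\mapsto g^{q^j}$, with coefficients depending only on $\Delta$ and $\theta$. This is exactly the bound you need, and it is the real content of the proof.

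Two auxiliary claims also need correcting.

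\textbf{There is no $g=0$ correction.} We have $(q^m-1)/(q^2-1)=1+q^2+\dots+q^{m-2}\equiv 1\pmod p$, just like $(q^m-1)/(q-1)$. Your own expression $-1/|\Aut\phi|$ equals $1$ in both cases, since $q-1\equiv q^2-1\equiv -1$. So $\mu(\mathbf v,t)$ equals the full $(g,\Delta)$-sum exactly. This matters because your claim that such a correction would vanish by the same degree bound is false: a slice with $g$ fixed has no $g$-sum to kill it. For example, $\mathbf v=0$, $t=0$ satisfies the hypothesis, yet the $g=0$ slice contributes $\sum_{\Delta}1=q^m-1\equiv -1$.

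\textbf{Nothing needs to be done in $\Delta$.} No useful degree bound in the $\Delta$-coordinates exists anyway, since $\Delta^{-q^j}$ appears. Once $\Delta$ is fixed, $b=(-1)^m\Norm_{\k/\bF_q}(\Delta)^{-1}$ and all the coefficients above are constants. The inner sum over $g$ then already vanishes for every single $\Delta$, so the norm-fibre step can be dropped.
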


Together with Proposition~\ref{prop:sufficient}, this implies Theorem~\ref{thm:main_SRB}.

Since the theorem holds for any~$t$, it also holds for each~$b$ individually, as stated in the following corollary. The proof is an easier version of that of Lemma~\ref{lem:determine}.

\begin{corollary}\label{cor:fixed-b}
    Fix $b \in \bF_q^\times$ and let $\mathbf{v}$ satisfy the hypothesis from Theorem~\ref{thm:SRB_upgrade}. Then 
    \[
    \sum_{a \in A}\#\text{Iso}_{\p^n}(a,b)a_0^{v_0}a_1^{v_1}\cdots a_N^{v_N} = 0.
    \]
\end{corollary}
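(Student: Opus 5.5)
Fix $b_0 \in \bF_q^\times$ and let $\mathbf{v}$ satisfy $2\|\mathbf{v}\| > nd(|\mathbf{v}|-q+1)$. The plan is to isolate the contribution of $b=b_0$ from the sums $\mu(\mathbf{v},t)$ by character orthogonality on the cyclic group $\bF_q^\times$. The key point is that the hypothesis of Theorem~\ref{thm:SRB_upgrade} involves only $\mathbf{v}$ and not $t$. Hence the theorem gives $\mu(\mathbf{v},t)=0$ for every $t \in \Z$ at once.

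Set
\[
S(b) := \sum_{a \in A}\#\Iso_{\p^n}(a,b)\, a_0^{v_0}\cdots a_N^{v_N} \in \bF_q,
\]
so that $\mu(\mathbf{v},t) = \sum_{b \in \bF_q^\times} S(b)\, b^t$. Note that $\mu(\mathbf{v},t)$ depends only on $t \bmod (q-1)$. The key identity is that for $b \in \bF_q^\times$,
\[
\sum_{t=0}^{q-2} b^t b_0^{-t} =
\begin{cases} q-1 = -1 & \text{if } b=b_0,\\ 0 & \text{otherwise,}\end{cases}
\]
with the computation taking place in $\bF_q$. The case $b \neq b_0$ is a geometric series in the element $b b_0^{-1} \neq 1$, whose $(q-1)$-st power equals $1$.

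Multiplying $\mu(\mathbf{v},t)$ by $b_0^{-t}$ and summing over $t=0,\dots,q-2$ therefore gives
\[
\sum_{t=0}^{q-2} b_0^{-t}\mu(\mathbf{v},t) = -S(b_0).
\]
The left-hand side vanishes term by term by Theorem~\ref{thm:SRB_upgrade}, so $S(b_0)=0$, which is the claim.

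The only points needing care are routine. First, the sums are finite, since $\Iso_{\p^n}(a,b)$ is empty for all but finitely many $a$ (by the degree bound on $a$). Second, the conventions $0^0=1$ and integer exponents are compatible with viewing $b^t$ for $b \in \bF_q^\times$. There is no real obstacle beyond checking that Theorem~\ref{thm:SRB_upgrade} applies uniformly in $t$, which it does.
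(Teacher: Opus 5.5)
Your proof is correct and is essentially the paper's argument. The paper deduces the corollary from the fact (an easier version of Lemma~\ref{lem:determine}) that a function on $\bF_q^\times$ is determined by its moments against $b^t$, $0 \le t \le q-2$; your explicit orthogonality inversion $\sum_{t=0}^{q-2} b_0^{-t}\mu(\mathbf{v},t) = -S(b_0)$ is just the concrete form of that nondegeneracy statement, applied using that Theorem~\ref{thm:SRB_upgrade} holds for every $t$.
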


Theorem~\ref{thm:SRB_upgrade} and Corollary~\ref{cor:fixed-b} are already interesting in their own right, as they establish a large family of class number relations that are otherwise non-obvious, cf.~Remark~\ref{rmk:class-numbers} and Example~\ref{ex:class-number-rels}.

The proof of Theorem~\ref{thm:SRB_upgrade} proceeds as follows. First, we observe that one can ignore isogeny classes entirely and instead work with the full set of Drinfeld modules, thus obtaining a sum over a different parameter space: $\k \times \k^\times$ instead of $A \times \bF_q^\times$. After rewriting the formulas in terms of the new parameters, we can prove the vanishing of the required terms by elementary finite field arithmetic and a digit-sum lemma.

It is a straightforward fact that $[\phi^{(g,\Delta)}] = [\phi^{(g',\Delta')}]$ if and only if there exists $\lambda \in \k^\times$ such that $g = \lambda^{q-1}g'$ and $\Delta = \lambda^{q^2-1}\Delta'$. In particular, $\Aut(\phi) \cong \bF_q^\times$ if $g \neq 0$ and $\Aut(\phi) \cong \bF_{q^{(2,m)}}^\times$ if $g = 0$ and $\k \cong \bF_{q^m}$. Using this, we show the following easy but important lemma.

\begin{lemma}\label{lem:domain}
    Let $\k$ be a field extension of~$\bF_q$ of degree~$m$. Let $f$ be a function on isomorphism classes of Drinfeld modules over $\k$ taking values in an $\bF_p$-vector space $V$. Then 
    \[
    \sum_{[\phi]} f[\phi] = \sum_{(g,\Delta) \in \k \times \k^\times} f[\phi^{(g,\Delta)}].
    \]
\end{lemma}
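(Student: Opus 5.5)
Let $G = \k^\times$ act on $X := \k \times \k^\times$ by $\lambda\cdot(g,\Delta) = (\lambda^{q-1}g,\lambda^{q^2-1}\Delta)$. By the fact quoted just before the lemma, the orbits of this action are exactly the fibres of $(g,\Delta)\mapsto[\phi^{(g,\Delta)}]$. Since $f[\phi^{(g,\Delta)}]$ is constant on orbits, the right-hand side equals
\[
\sum_{[\phi]} \#\mathcal{O}_{[\phi]}\, f[\phi],
\]
where $\mathcal{O}_{[\phi]}$ is the orbit corresponding to $[\phi]$. As $V$ is an $\bF_p$-vector space, it suffices to show that $\#\mathcal{O}_{[\phi]} \equiv 1 \pmod p$ for every orbit.

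By orbit--stabiliser, $\#\mathcal{O} = \#\k^\times/\#\mathrm{Stab}(g,\Delta)$. We identify the stabiliser case by case.

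\emph{Case $g\neq 0$.} Here $\lambda$ stabilises $(g,\Delta)$ iff $\lambda^{q-1}=1$ and $\lambda^{q^2-1}=1$, i.e.\ iff $\lambda\in\bF_q^\times$. Hence
\[
\#\mathcal{O} = \frac{q^m-1}{q-1} = 1+q+\dots+q^{m-1} \equiv 1 \pmod p.
\]

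\emph{Case $g=0$.} Here the stabiliser is $\{\lambda\in\k^\times : \lambda^{q^2-1}=1\} = \k^\times\cap\bF_{q^2}^\times = \bF_{q^{(2,m)}}^\times$, in agreement with the automorphism-group description given above. If $m$ is odd, this is $\bF_q^\times$ and the count is as in the previous case. If $m$ is even, then
\[
\#\mathcal{O} = \frac{q^m-1}{q^2-1} = 1+q^2+\dots+q^{m-2} \equiv 1 \pmod p.
\]

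In every case $\#\mathcal{O}\equiv 1 \pmod p$, so the two sums agree in $V$. The main point to get right is that orbits correspond exactly to isomorphism classes. The stabiliser computations are immediate once this is in place, and the $\bF_p$-linearity of $V$ is what allows the orbit sizes to be reduced mod $p$.
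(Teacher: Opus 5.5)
Your proof is correct and is essentially the paper's argument. Both use the $\k^\times$-action $\lambda\star(g,\Delta)=(\lambda^{q-1}g,\lambda^{q^2-1}\Delta)$, whose orbits are the isomorphism classes, and then orbit--stabiliser shows each orbit has size $1+q+\dots+q^{m-1}$ or $1+q^2+\dots+q^{m-2}$, hence $\equiv 1 \pmod p$. The only difference is cosmetic: you compute the stabiliser directly instead of identifying it with $\Aut(\phi)$, and you use implicitly what the paper states outright, namely that every rank-2 Drinfeld module over $\k$ is some $\phi^{(g,\Delta)}$, so each class has a nonempty orbit.
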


\begin{proof}
    By definition, each Drinfeld module $\phi$ is of the form $\phi^{(g,\Delta)}$ for some $(g,\Delta) \in \k \times \k^\times$. Consider the $\k^\times$-action on such pairs given by $\lambda \star (g,\Delta) = (\lambda^{q-1}g,\lambda^{q^2-1}\Delta)$. Then $[\phi^{(g,\Delta)}] = [\phi^{(g',\Delta')}]$ if and only if $(g,\Delta)$ lies in the $\star$-orbit of $(g',\Delta')$. Therefore
    \[
    \sum_{(g,\Delta) \in \k \times \k^\times} f[\phi^{(g,\Delta)}] = \sum_{[\phi]} \# \text{Orb}_\star(\phi) f[\phi].
    \]
    By the Orbit-Stabilizer Theorem, we have 
    \[
    \# \text{Orb}_\star(\phi) = \frac{\#\k^\times}{\# \text{Stab}_\star(\phi)} = \frac{q^m-1}{\# \Aut(\phi)}.
    \]
    If $\#\Aut(\phi) = q-1$, then $\#\text{Orb}_\star(\phi) = 1 + q + \ldots + q^{m-1} \equiv_p 1$. If $\#\Aut(\phi) = q^2-1$, then $m$ is even and again $\#\text{Orb}_\star(\phi) = 1 + q^2 + \ldots + q^{m -2} \equiv_p 1$. This finishes the proof.
\end{proof}

\begin{corollary}\label{cor:mass}
Let $\k = \bF_{\p^n}$. For all $\mathbf{v} \in \mathbb Z_{\geq 0}^{N+1}$ and $t \in \mathbb Z$,
\[
 \mu(\mathbf{v},t) = \sum_{a,b} \#\Iso_{\p^n}(a,b)\, \prod_{i=0}^N a_i^{v_i} b^t
= \sum_{(g,\Delta) \in \k \times \k^\times} b(g,\Delta)^t \prod_{i=0}^N a_i(g,\Delta)^{v_i}
\quad \text{in } \F_q.
\]
\end{corollary}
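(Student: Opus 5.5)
This follows by applying Lemma~\ref{lem:domain} to a suitable function on isomorphism classes. Take $V = \bF_q$, which is an $\bF_p$-vector space, and $\k = \bF_{\p^n}$. For an isomorphism class $[\phi]$, set
\[
f[\phi] := b_\phi^t \prod_{i=0}^N a_{i,\phi}^{v_i} \in \bF_q .
\]
This is well defined. The Frobenius polynomial $c_\phi(X)$ depends only on the $\k$-isomorphism class of $\phi$, since isomorphic Drinfeld modules have the same characteristic polynomial of Frobenius. Hence $a_\phi$ and $b_\phi$, and so the coefficients $a_{i,\phi}$, are invariants of $[\phi]$. Here $b_\phi \in \bF_q^\times$, so negative $t$ causes no problem, and $0^0 = 1$ handles vanishing coefficients.

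Next, group the left-hand sum by isogeny class. Every isomorphism class $[\phi]$ lies in exactly one set $\Iso_{\p^n}(a,b)$, namely the one with $(a,b) = (a_\phi,b_\phi)$. On that set $f$ is constant, equal to $b^t\prod_i a_i^{v_i}$. Summing over $[\phi]$ therefore gives
\[
\sum_{[\phi]} f[\phi] = \sum_{a,b} \#\Iso_{\p^n}(a,b)\, b^t \prod_{i=0}^N a_i^{v_i} = \mu(\mathbf{v},t),
\]
where the counts $\#\Iso_{\p^n}(a,b)$ are read in $\bF_q$ through $\Z \to \bF_p \subset \bF_q$. Only finitely many pairs $(a,b)$ contribute, so there are no convergence issues.

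Finally, Lemma~\ref{lem:domain} turns the sum over $[\phi]$ into a sum over all pairs $(g,\Delta) \in \k \times \k^\times$:
\[
\sum_{[\phi]} f[\phi] = \sum_{(g,\Delta)} f[\phi^{(g,\Delta)}].
\]
By definition $f[\phi^{(g,\Delta)}] = b(g,\Delta)^t \prod_i a_i(g,\Delta)^{v_i}$, which is exactly the right-hand side of the corollary.

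There is no real obstacle. The only point needing care is that the equality holds only in characteristic~$p$, where the orbit sizes are $\equiv 1 \pmod p$. That is why the identity is stated in $\bF_q$ rather than in $\Z$.
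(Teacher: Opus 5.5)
Your proposal is correct and follows the paper's proof: you rewrite $\mu(\mathbf{v},t)$ as a sum over isomorphism classes by grouping according to $(a_\phi,b_\phi)$, and then you apply Lemma~\ref{lem:domain} with $V=\bF_q$ and $f[\phi]=b_\phi^t\prod_i a_{i,\phi}^{v_i}$. Your extra remarks are also accurate: $f$ is well defined on isomorphism classes, and the identity holds only in characteristic~$p$.
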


\begin{proof}
Let $f$ be any function on $A \times \bF_{q}^\times$ with values in an abelian group. By definition of $\Iso_{\p^n}(a,b)$ (see \cite[Def.~2.21]{devries_traces}), we have
\[
\sum_{a,b} \#\Iso_{\p^n}(a,b)f(a,b) = \sum_{[\phi]} f(a_\phi,b_\phi).
\]
Now apply
Lemma~\ref{lem:domain} with $V = \F_q$ and $f[\phi] = f(a_\phi,b_\phi) = b_\phi^t\prod_{i=0}^N a_{i,\phi}^{v_i} $.
\end{proof}

The reason why having a sum over $\k \times \k^\times$ may prove useful is the following standard fact, which we state here for completeness.

\begin{lemma}\label{lem:orth}
Let $\k \cong \bF_{q^m}$. Then for any $E\in \Z_{\geq 0}$ and $C \in \mathbb Z$,
\[
\sum_{g\in \k}g^E=\begin{cases}-1 & \text{if } E>0\text{ and }(q^m-1)\mid E;\\0 &\text{otherwise},\end{cases}
\qquad
\sum_{\Delta\in \k^\times}\Delta^C=\begin{cases}-1 & \text{if }(q^m-1)\mid C;\\0 &\text{otherwise}.\end{cases}
\]
\end{lemma}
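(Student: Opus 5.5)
The plan is the standard character-sum computation. $\k^\times$ is cyclic of order $q^m-1$. Fix a generator $\zeta$ and write every nonzero $g$ as $\zeta^i$ with $0 \le i < q^m-1$.

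For the first identity, note that $0^E=0$ when $E>0$. So for $E>0$ the sum reduces to $\sum_{g\in\k^\times} g^E$. The case $E=0$ is treated separately: by the convention $0^0=1$ the sum counts all $q^m$ elements of $\k$, hence equals $q^m = 0$ in characteristic $p$. This agrees with the claimed value $0$, because the first case requires $E>0$.

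Both identities therefore reduce to the single claim that for any $C\in\Z$,
\[
\sum_{\Delta\in\k^\times}\Delta^C =
\begin{cases}
-1 & \text{if } (q^m-1)\mid C,\\
0 & \text{otherwise}.
\end{cases}
\]
Negative $C$ are harmless, since $\Delta$ is invertible. Once this is established, the first identity with $E>0$ is exactly the case $C=E$.

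To prove the claim, first suppose $(q^m-1)\mid C$. Then every term equals $\Delta^C=1$, and the sum is $q^m-1=-1$ in $\k$.

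Otherwise $\zeta^C\neq 1$. Reindex the sum by $\Delta\mapsto \zeta\Delta$, a bijection of $\k^\times$. This shows that the sum $S$ satisfies $S=\zeta^C S$, so $(1-\zeta^C)S=0$ and hence $S=0$. Equivalently, one can sum the geometric series $\sum_{i=0}^{q^m-2}\zeta^{iC}=(\zeta^{C(q^m-1)}-1)/(\zeta^C-1)=0$.

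No step is a real obstacle. The only point needing care is the bookkeeping at $E=0$: the convention $0^0=1$ combined with $q^m\equiv 0$ is what makes the $E=0$ case land in the "otherwise" branch.
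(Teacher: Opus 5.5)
Your proof is correct. It is the standard argument: cyclicity of $\k^\times$, the reindexing $\Delta\mapsto\zeta\Delta$ when $(q^m-1)\nmid C$, and the separate handling of $E=0$ via $0^0=1$ and $q^m=0$ in $\k$. The paper gives no proof of its own, calling the lemma a standard fact stated for completeness.
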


Our next step is to find formulas for $a(g,\Delta)$ and $b(g,\Delta)$. We will essentially follow the method of \cite{caruso-leudiere}, specialized to our setting.

Fix $(g,\Delta) \in \k \times \k^\times$ and write $\varphi = \varphi^{(g,\Delta)}$. Consider the associated Anderson motive, i.e., the $\k$-vector space $M(\varphi) := \k\{\tau\}$, viewed as
a module over $\k \otimes_{\F_q} A = \k[T]$ by
\[
(c \otimes a)\cdot m := c\, m\, \varphi_a,
\]
endowed with the operator $\tau_M(m) := \tau m$. Note that $\tau_M$ is $(\sigma \otimes \text{id})$-semilinear: explicitly, $\tau_M((c \otimes a)m) = (c^q \otimes a)\tau_M(m)$.

\begin{lemma}\label{lem:motive-basis} Let $\k = \bF_{\p^n}$ and $\phi = \phi^{(g,\Delta)}$.
\begin{enumerate}
    \item $M(\phi)$ is free of rank~$2$ over $\k[T]$ with basis $\{1, \tau\}$;
    \item With respect to the above basis, we have
\[
\tau_M \begin{pmatrix} z \\ w \end{pmatrix}
= F \cdot \sigma\!\begin{pmatrix} z \\ w \end{pmatrix},
\qquad
F := \begin{pmatrix} 0 & \Delta^{-1}(T - \theta) \\ 1 & -\Delta^{-1} g\end{pmatrix} \in
\mathrm{M}_2(\k[T]);
\]
\item $\pi_\phi = \tau_M^{nd}$ is a $\k[T]$-linear map acting on $M(\phi)$ with matrix
\[
\Phi := F\,\sigma(F)\,\sigma^2(F)\cdots \sigma^{nd-1}(F);
\]
\item Set $a := \Tr(\Phi)$ and $D := \det(\Phi)$.
Then $a, D \in \bF_q[T]$;
\item $a = a(g,\Delta)$ and $D = b(g,\Delta)\wp^n$ with $b(g,\Delta) = (-1)^{nd}\, \Norm_{\k/\bF_q}(\Delta)^{-1} \in \bF_q^\times$;
\item $c_\phi(X) = X^2 - aX + D$.
\end{enumerate} 

\end{lemma}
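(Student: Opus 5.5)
We prove the six items in order, following the Anderson-motive recipe of \cite{caruso-leudiere}. For (1), every $m \in \k\{\tau\}$ can be divided on the right by $\phi_T = \theta + g\tau + \Delta\tau^2$, since $\Delta \neq 0$ and the leading coefficient is a unit. Iterating the division gives a unique expansion $m = z_0 + z_1\tau$ with $z_0, z_1 \in \k[T]$, where $(c\otimes T^i)\cdot 1 = c\,\phi_{T^i}$. Uniqueness follows by comparing $\tau$-degrees, because $\deg_\tau \phi_{T^i} = 2i$. Hence $\{1,\tau\}$ is a $\k[T]$-basis.

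For (2), we compute $\tau_M$ on the basis. First, $\tau_M(1) = \tau$. Next, $\tau_M(\tau) = \tau^2$, and $\Delta\tau^2 = \phi_T - \theta - g\tau$ gives $\tau^2 = \Delta^{-1}(T-\theta)\cdot 1 - \Delta^{-1}g\cdot\tau$ in the module structure. Semilinearity then gives, for $m = z\cdot 1 + w\cdot\tau$,
\[
\tau_M(m) = \sigma(z)\,\tau + \sigma(w)\,\tau^2 .
\]
Reading off coordinates produces the stated $F$, up to the convention of writing coordinates as a column vector; I will fix that convention to match. For (3), iterating gives $\tau_M^{j}(v) = F\sigma(F)\cdots\sigma^{j-1}(F)\,\sigma^j(v)$. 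Since $\sigma^{nd}$ is the identity on $\k = \bF_{q^{nd}}$ (and trivial on $T$), the map $\tau_M^{nd}$ is $\k[T]$-linear with matrix $\Phi$. Moreover $\tau^{nd}$ is the Frobenius endomorphism $\pi_\phi$, acting by right multiplication, which commutes with $\phi_a$.

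For (4), we use that $\sigma(\Phi) = \sigma(F)\cdots\sigma^{nd}(F)$ is $F^{-1}\Phi F$, since $\sigma^{nd}(F) = F$. Here $F$ is invertible over $\k(T)$ because
\[
\det F = -\Delta^{-1}(T-\theta) \neq 0 .
\]
Trace and determinant are conjugation-invariant, so $\sigma(a) = a$ and $\sigma(D) = D$, which forces the coefficients into $\bF_q$. For the determinant in (5), multiplicativity gives
\[
D = \prod_{i=0}^{nd-1} \bigl(-\sigma^i(\Delta)^{-1}(T - \theta^{q^i})\bigr) = (-1)^{nd}\Norm_{\k/\bF_q}(\Delta)^{-1}\prod_i (T-\theta^{q^i}).
\]
Since $\theta$ generates $\bF_\p$ of degree $d$ over $\bF_q$, its conjugates over $\bF_q$ in $\k$ are $\theta^{q^i}$. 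Each occurs $n$ times as $i$ runs through $0,\dots,nd-1$, so the product equals $\wp^n$.

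For (5) and (6), the key input is that the characteristic polynomial of $\pi_\phi$ acting $\k[T]$-linearly on $M(\phi)$ equals the Frobenius polynomial $c_\phi$. I would justify this by the standard comparison: the Tate module satisfies
\[
T_\l\phi \otimes \k[T] \cong M(\phi)\otimes A_\l \quad \text{(up to duality)}
\]
for $\l \neq \p$, compatibly with Frobenius, or alternatively cite \cite{caruso-leudiere}, which proves exactly that $\det(X - \tau_M^{[\k:\bF_q]})$ on the motive is the characteristic polynomial of Frobenius. Granting this, $c_\phi(X) = X^2 - aX + D$, and matching with $X^2 - a_\phi X + b_\phi\wp^n$ identifies $a = a(g,\Delta)$ and $b(g,\Delta)$. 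The main obstacle is this motive–Frobenius identification, since everything else is explicit computation. I would handle it by citation, with care about the dual/transposition convention, which affects neither trace nor determinant.
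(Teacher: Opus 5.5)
Your proposal is correct and follows the paper's proof essentially step for step. It uses the same derivation of $F$ from $\Delta\tau^2 = \phi_T - \theta - g\tau$, the same iteration to $\Phi$, the same rationality argument via $\sigma(\Phi) = F^{-1}\Phi F$, the same determinant product, and the same appeal to the standard identification of $c_\phi$ with the characteristic polynomial of $\tau_M^{nd}$ on $M(\phi)$. Your only additions are an explicit right-division argument for item~1, where the paper cites Papikian, and a suggested source for the motive--Frobenius comparison, which the paper simply calls standard. One small wording fix: $\tau_M^{nd}$ is left multiplication by $\tau^{nd}$, and it agrees with the right-multiplication action of $\pi_\phi$ only because $\tau^{nd}$ is central in $\k\{\tau\}$.
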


\begin{proof}
1. This is standard, see e.g. \cite[Lem.~3.4.4]{papikian}. 

2. Since $T \cdot 1 = \varphi_T = \theta + g\tau + \Delta\tau^2$, we have
\begin{equation}\label{eq:tausq}
\tau^2 = \Delta^{-1}(T-\theta)\cdot 1 - \Delta^{-1}g \cdot \tau,
\end{equation}
which yields the expression for~$F$. 

3. Iterating the equality $\tau_M = F\sigma$, we find that $\tau_M^i = F \sigma(F) \cdots \sigma^{i-1}(F)\sigma^i$ for all $i \geq 1$. Since $\sigma^{nd} = \text{id}$, we get the claimed expression for $\pi$.

4. A priori, we have $a(g,\Delta), D \in \k[T]$. Since $\sigma^{nd}(F) = F$, we have
$\sigma(\Phi) = \sigma(F)\cdots\sigma^{nd-1}(F)\,F = F^{-1}\,\Phi\, F$ over $\k(T)$.
Hence $\Tr(\Phi)$ and $\det(\Phi)$ are
$\sigma$-invariant, i.e., lie in $\F_q[T]$.

5. It is a standard fact that $c_\phi(X)$ is the characteristic polynomial of~$\pi_{\phi}$ acting on~$M(\phi)$, which gives $a = a(g,\Delta)$. Moreover, $b(g,\Delta) \wp^n = \det \Phi$, which can be computed as 
\[
\det \Phi = \prod_{i=0}^{nd-1}\sigma^i(\det F) = \prod_{i=0}^{nd-1}
\bigl(-\Delta^{-q^i}(T-\theta^{q^i})\bigr) = (-1)^{nd} \Norm_{\k/\F_q}(\Delta)^{-1}
\prod_{i=0}^{nd-1}(T - \theta^{q^i}).
\]
The $d$ roots of $\wp(T)$ over $\k$ are $\theta, \theta^q, \dots, \theta^{q^{d-1}}$, and the list $i = 0, \dots, nd-1$ runs through them exactly $n$ times each; hence the product is $\wp(T)^n$.

6. This follows from points 4 and 5.
\end{proof}

\begin{remark}
    Since $b(g,\Delta)$ only depends on~$\Delta$ by Lemma~\ref{lem:motive-basis}.5, we will simply denote it by~$b(\Delta)$. 
\end{remark}

\subsubsection*{Disconnected subsets}

For $i \in \Z/m$, set
\[
u_i := \Delta^{-q^i}\,(T - \theta^{q^i}) \in \k[T], \qquad
w_i := -\Delta^{-q^i} g^{q^i} \in \k,
\]
so that $\sigma^i(F) = \begin{pmatrix} 0 & u_i \\ 1 & w_i \end{pmatrix}$. Let $\Phi$ be the matrix of $\pi_\phi$ as in Lemma~\ref{lem:motive-basis}. We will express $a(g,\Delta) = \Tr(\Phi)$ in terms of the $u_i$ and~$w_i$. We first set up some notation.

\begin{definition}
    Call a subset $S \subseteq \Z/m$ \emph{disconnected} if $i \in S \implies i + 1 \notin S$.
For $S$ disconnected, put $\mathrm{Cov}(S) := \bigcup_{i \in S}\{i-1, i\}$ and $U(S) := (\Z/m) \setminus
\mathrm{Cov}(S)$ ($U$ for ``uncovered"), so that $\#U(S) = m - 2\#S$.

\end{definition}

\begin{proposition}\label{prop:a-expr}
For $(g,\Delta) \in \k \times \k^\times$, where $\k \cong \bF_{q^m}$, we have
\[
a(g,\Delta) = \sum_{\substack{S \subset \Z/m \\ \text{disconnected}}} \ \prod_{i \in S} u_i
\prod_{j \in U(S)} w_j .
\]
\end{proposition}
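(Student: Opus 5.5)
Since $a(g,\Delta)=\Tr(\Phi)$ with $\Phi = F_0F_1\cdots F_{m-1}$ and $F_i := \sigma^i(F) = \begin{pmatrix} 0 & u_i \\ 1 & w_i\end{pmatrix}$ (here $m = nd$), the plan is to expand the trace of this product of $2\times 2$ matrices as a sum over closed walks on the two-element index set $\{1,2\}$. Concretely,
\[
\Tr(F_0\cdots F_{m-1}) = \sum_{(s_0,\dots,s_{m-1})\in\{1,2\}^{m}} \prod_{i=0}^{m-1} (F_i)_{s_i,s_{i+1}},
\]
with indices of $s$ read in $\Z/m$. The entry $(F_i)_{1,1}=0$ kills every walk containing a step $1\to 1$. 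The surviving steps are $1\to 2$ (weight $u_i$), $2\to 1$ (weight $1$) and $2\to 2$ (weight $w_i$).

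Next we set up a bijection between the surviving cyclic sequences and disconnected subsets of $\Z/m$. Given a valid sequence $s$, let $S := \{i : s_i = 1\}$. Since no step $1\to1$ is allowed, $i\in S$ forces $i+1\notin S$, so $S$ is disconnected. Conversely, every disconnected $S$ determines $s$ by setting $s_i=1$ exactly when $i\in S$, and this sequence has no $1\to 1$ steps.

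We then read off the weight of the walk attached to $S$, step by step:
\begin{itemize}
\item step $i$ with $s_i=1$ (so $i\in S$) goes $1\to 2$ and contributes $u_i$;
\item step $i$ with $s_i=2$ and $s_{i+1}=1$ (i.e.\ $i\notin S$, $i+1\in S$, so $i\in \mathrm{Cov}(S)\setminus S$) contributes $1$;
\item step $i$ with $s_i=s_{i+1}=2$ (i.e.\ $i\notin S$ and $i+1\notin S$, which is exactly $i\notin\mathrm{Cov}(S)$, i.e.\ $i\in U(S)$) contributes $w_i$.
\end{itemize}
The walk's weight is therefore $\prod_{i\in S}u_i\prod_{j\in U(S)}w_j$, and summing over $S$ gives the claimed formula.

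The only delicate point is the bookkeeping of the cyclic indexing and of small cases.
\begin{itemize}
\item Since $\mathrm{Cov}(S)=\bigcup_{i\in S}\{i-1,i\}$, an element $i\in S$ covers $i$ and $i-1$. This matches the convention that the step from position $i-1$ to position $i$ is the step $2\to1$ with weight $1$.
\item For $m=1$ the condition ``disconnected'' forces $S=\emptyset$, because $i+1=i$. The formula then gives $a=w_0=-\Delta^{-1}g$, which agrees with $\Tr F$.
\item For $m=2$ we should check directly that $\{0\}$ and $\{1\}$ are both disconnected while $\{0,1\}$ is not. The formula then gives $u_0+u_1+w_0w_1$, which agrees with the direct computation of $\Tr(F_0F_1)$.
\end{itemize}
No other obstacle is expected.
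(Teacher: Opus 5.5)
Your proposal is correct and follows essentially the same route as the paper: expand $\Tr(F_0\cdots F_{m-1})$ as a sum over cyclic sequences $s\colon \Z/m\to\{1,2\}$, discard those with a $1\to 1$ step because $(F_i)_{1,1}=0$, and identify $S=s^{-1}(1)$ with a disconnected subset whose weight is read off from the entries $u_i$, $1$, $w_i$. Your explicit check that $i\in U(S)$ exactly when $s_i=s_{i+1}=2$, and your verification of the cases $m=1,2$, fill in bookkeeping that the paper leaves implicit.
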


\begin{proof}
Let $M_i := \sigma^i(F)$. By writing out explicitly the diagonal entries of the product $\prod M_i$, we obtain
\[
\Tr(M_0 M_1 \cdots M_{m-1}) = \sum_{s\: \Z/m \to \{1,2\}} \ \prod_{i \in \Z/m}
(M_i)_{s(i),\, s(i+1)} .
\]
Note that $(M_i)_{1,1} = 0$ for all $i$. Thus if $s\: \Z/m \to \{1,2\}$ is such that $s(i) = s(i+1) = 1$ for some~$i$, then the corresponding term is zero. It follows that if the term corresponding to $s$ is nonzero, then $S := s^{-1}(1) \subset \Z/m$ is disconnected. 
The identifications $(M_i)_{1,2} = u_i$, $(M_i)_{2,1} = 1$, and
$(M_i)_{2,2} = w_i$ now give the claimed expression.
\end{proof}

Substituting the definitions of $u_i$ and $w_i$, we obtain:

\begin{corollary}\label{cor:rewrite}
    For the Drinfeld module $\phi^{(g,\Delta)}$ over $\bF_{\p^n}$, we have
\begin{equation}\label{eq:term}
a(g,\Delta) = \sum_{\substack{S \subset \Z/nd \\ \text{disconnected}}}  \prod_{i \in S} (T - \theta^{q^i})
\ \cdot\ (-1)^{\#U(S)}\cdot g^{\sum_{j \in U(S)} q^j}\cdot
\Delta^{-\sum_{i \in S} q^i - \sum_{j \in U(S)} q^j}.
\end{equation}
In particular, for $h \geq 0$, we have
\begin{equation}\label{eq:coeff}
a_h(g,\Delta) = \sum_{\substack{S \subset \Z/nd \text{ disconnected}\\ \#S \geq h}} c_{h}(S) \cdot g^{\sum_{j \in U(S)} q^j} \cdot \Delta^{-\sum_{i \in S}q^i - \sum_{j \in U(S)}q^j},
\end{equation}
where $c_h(S) \in \bF_{q^{nd}}$ is the coefficient of $T^h$ in $(-1)^{\#U(S)}\prod_{i \in S}(T-\theta^{q^i})$.
\end{corollary}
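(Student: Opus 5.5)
The plan is to substitute directly into Proposition~\ref{prop:a-expr}, taking $\k = \bF_{\p^n}$, so that $m = nd$ and the index set is $\Z/nd$. No new input is needed beyond that proposition and the definitions of $u_i$ and $w_i$.

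First I would fix a disconnected $S \subset \Z/nd$ and expand its summand. Since $u_i = \Delta^{-q^i}(T-\theta^{q^i})$, we get
\[
\prod_{i\in S} u_i = \Delta^{-\sum_{i\in S} q^i}\prod_{i\in S}(T-\theta^{q^i}).
\]
Since $w_j = -\Delta^{-q^j}g^{q^j}$, we get
\[
\prod_{j\in U(S)} w_j = (-1)^{\#U(S)}\, g^{\sum_{j\in U(S)}q^j}\,\Delta^{-\sum_{j\in U(S)}q^j}.
\]
Multiplying these and combining the two powers of $\Delta$ gives exactly the summand in \eqref{eq:term}. Negative exponents are harmless because $\Delta \in \k^\times$. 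Summing over $S$ and using Proposition~\ref{prop:a-expr} then gives \eqref{eq:term}.

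For \eqref{eq:coeff}, I would view \eqref{eq:term} as an identity in $\k[T]$. By Lemma~\ref{lem:motive-basis}.4--5, $a(g,\Delta)$ lies in $\bF_q[T]$, so its coefficient $a_h(g,\Delta)$ equals the coefficient of $T^h$ on the right-hand side computed in $\k[T]$. In each summand, only the factor $(-1)^{\#U(S)}\prod_{i\in S}(T-\theta^{q^i})$ depends on $T$, and it has degree $\#S$. Hence its $T^h$-coefficient $c_h(S)$ vanishes unless $\#S \ge h$, so those $S$ can be dropped from the sum. Each $c_h(S)$ lies in $\bF_{q^{nd}}$ because the roots $\theta^{q^i}$ lie in $\k$.

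There is no real obstacle. The only points needing care are bookkeeping: the sign $(-1)^{\#U(S)}$ is absorbed into $c_h(S)$, and extracting coefficients in $\k[T]$ is legitimate because the left-hand side already lies in $\bF_q[T]$.
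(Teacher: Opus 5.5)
Your proposal is correct and follows essentially the same route as the paper. The paper substitutes the definitions of $u_i$ and $w_i$ into Proposition~\ref{prop:a-expr} to get \eqref{eq:term}. It then justifies the restriction $\#S \geq h$ by noting that the $S$-summand has degree $\#S$ in $T$, so $c_h(S)=0$ when $\#S<h$.
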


\begin{proof}
    Only the condition $\#S \geq h$ in \eqref{eq:coeff} needs to be justified. For this, simply note that the summand corresponding to $S$ in \eqref{eq:term} has degree~$\#S$, so that $\#S < h \implies c_h(S) = 0$.
\end{proof}

\begin{remark}
    Since a disconnected subset of $\Z/m$ has cardinality at most $\lfloor m/2 \rfloor$, we recover the well-known fact that $\deg a_\phi \leq m/2$ for $\phi$ over $\bF_{q^m}$ (the Riemann hypothesis for rank~2 Drinfeld modules).
\end{remark}

\subsubsection*{Digit sums}

The final remaining ingredient for the proof of Theorem~\ref{thm:SRB_upgrade} is a digit sum lemma.

For an integer $E \geq 0$, write its base $q$ expansion as $E = \sum_{i \geq 0}E_iq^i$, with $0 \leq E_i < q$ for all $i$. Define the \emph{$q$-digit sum function} $s_q$ via $s_q(E) := \sum_{i \geq 0} E_i$. Note that $s_q(E + E') \leq s_q(E) + s_q(E')$, with equality if and only if there are no carries when adding $E$ and $E'$ in base $q$.

\begin{lemma}\label{lem:digitsum}
    Suppose $E > 0$ and $q^m - 1 \mid E$. Then $s_q(E) \geq m(q-1)$.
\end{lemma}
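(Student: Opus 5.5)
The plan is to use the fact that multiplication by $q$ permutes base-$q$ digits cyclically modulo $q^m-1$. Write $E = \sum_i E_i q^i$ and reduce $E$ modulo $q^m - 1$ by folding its digits into $m$ residue classes. Set
\[
E' := \sum_{j=0}^{m-1} \Bigl(\sum_{i \equiv j \ (\mathrm{mod}\ m)} E_i\Bigr) q^j .
\]
Since $q^m \equiv 1 \pmod{q^m-1}$, we have $E' \equiv E \pmod{q^m-1}$ and $s_q(E') \le s_q(E)$. The inequality holds because $E'$ is a sum of the integers $E_i q^{i \bmod m}$, and digit sums are subadditive, as noted above.

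The difficulty is that the coefficients of $E'$ may exceed $q-1$, so $E'$ need not be a number below $q^m$ in standard form. To handle this, I iterate a normalization step. While some coefficient in position $j$ is at least $q$, I carry: subtract $q$ from it and add $1$ to position $j+1$, reading positions modulo $m$, so that a carry out of position $m-1$ wraps to position $0$. This is legitimate modulo $q^m-1$ because $q \cdot q^{m-1} = q^m \equiv 1$.

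Each such wrapped carry preserves the residue modulo $q^m-1$. It also strictly decreases the total coefficient sum by $q-1$, so the process terminates. It ends at a digit vector $(c_0,\dots,c_{m-1})$ with all $0 \le c_j \le q-1$, $\sum_j c_j \le s_q(E)$, and $\sum_j c_j q^j \equiv E \equiv 0 \pmod{q^m-1}$.

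The remaining point is to show that this final vector is not the zero vector. The number $C := \sum_j c_j q^j$ lies in $[0, q^m-1]$ and is divisible by $q^m-1$, so $C \in \{0, q^m-1\}$. If $C = q^m-1$, then every $c_j = q-1$, so $s_q(E) \ge \sum_j c_j = m(q-1)$, and we are done.

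So I have to rule out $C = 0$. This is the step I expect to be the main obstacle. Since $E>0$, the initial total coefficient sum is positive. Each carry step leaves the total coefficient sum positive: it removes $q$ from one position and adds $1$ to another, so the sum stays at least $1$. Hence the final vector is nonzero, and therefore $C \neq 0$.

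The conclusion is that $C = q^m - 1$ and $s_q(E) \ge m(q-1)$.
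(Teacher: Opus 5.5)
Your proof is correct and is essentially the paper's argument. Both reduce $E$ modulo $q^m-1$ by folding base-$q$ digits via $q^m\equiv 1$ without increasing the digit sum, until reaching a positive multiple of $q^m-1$ that is at most $q^m-1$ and hence equal to it. The paper runs this as strong induction on $E$ via $E=q^mF+G\mapsto F+G$, where positivity is automatic from $F\geq 1$; your all-at-once folding with wrap-around carries needs the separate nonvanishing check you supply, but it makes the equality case (no carries) transparent, which is what Lemma~\ref{lem:digitsum_v2} uses.
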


\begin{proof}
    Suppose first that $0 < E < q^m$. Then by hypothesis $E = q^m-1$, and so $s_q(E) = m(q-1)$. If $E \geq q^m$, we may write $E = q^mF + G$ with $F \geq 1$ and $0 \leq G < q^m$. Then $E' := F + G > 0$ is divisible by $q^m-1$ and $s_q(E) = s_q(q^mF) + s_q(G) = s_q(F) + s_q(G) \geq s_q(E')$, so we are done by induction.
\end{proof}

\begin{proof}[Proof of Theorem~\ref{thm:SRB_upgrade}]
    Let $\mathbf{v}$ be as in the statement of the theorem. By Corollary~\ref{cor:mass} and Lemma~\ref{lem:motive-basis}.5, we may write
    \begin{equation}\label{eq:mu(v,t)}
     \mu(\mathbf{v},t) = \sum_{(g,\Delta) \in \k \times \k^\times} (-1)^{ndt}\Delta^{-t\frac{q^{nd}-1}{q-1}} \prod_{i=0}^N a_i(g,\Delta)^{v_i}.
    \end{equation}
    By Corollary~\ref{cor:rewrite}, we have
    \[
    a_i(g,\Delta) = \sum_{\substack{S \subset \Z/nd \text{ disconnected} \\ \#S \geq i}} c_{i}(S,\Delta) \, g^{E(S)},
    \]
    for some $c_{i}(S,\Delta) \in \k$, where we let $E(S) := \sum_{j \in U(S)} q^j$. Note that $\# U(S) = nd - 2\#S \leq nd - 2i$ for all~$S$ appearing in the sum, so in particular $s_q(E(S)) \leq nd - 2i$. 
    
    Expanding the product in \eqref{eq:mu(v,t)}, we obtain
    \[
    \mu(\mathbf{v},t) = \sum_{\Delta \in \k^\times} \sum_{\mathbf{S}} c(\mathbf{S},\Delta,t) \sum_{g \in \k}g^{E(\mathbf{S})}, 
    \]
    where $\mathbf{S}$ ranges over tuples $(\mathbf{S}_i)_{i=0}^{N}$ of tuples $\mathbf{S}_i = (S_{i,j})_{j=1}^{v_i}$ of disconnected subsets satisfying $\#S_{i,j} \geq i$ for all~$j$. Here $E(\mathbf{S}) := \sum_{i=0}^{N} \sum_{j =1}^{v_i} E(S_{i,j})$, and the $c(\mathbf{S},\Delta,t)$ are constants not depending on~$g$.
    
    
    For such $E = E(\mathbf{S})$, we have
    \[
    s_q(E) \leq \sum_{i=0}^N \sum_{j=1}^{v_i} s_q(E(S_{i,j})) \leq \sum_{i=0}^N v_i(nd-2i).
    \]
    By assumption, we have $2 \sum_{i=0}^N iv_i > nd \left( \sum_{i=0}^N v_i - q + 1 \right)$, and hence $s_q(E) < nd(q-1)$. In particular, by Lemma~\ref{lem:digitsum}, we find that either $E = 0$ or $q^{nd}-1$ does not divide $E$. In either case, Lemma~\ref{lem:orth} gives $\sum_{g \in \k} g^E = 0$,
    and hence $ \mu(\mathbf{v},t) = 0$.
\end{proof}

By a similar argument, we obtain a useful formula for $W_{k,t}$. To ease notation, we make the following definition.

\begin{definition}
    Let $k$ and~$m$ be positive integers. Let $\mathbf{S} = (S_1,\ldots,S_k)$ be a $k$-tuple of disconnected subsets of~$\mathbb Z/m$. Define
    \[
        E(\mathbf{S}) := \sum_{\alpha = 1}^k \sum_{j \in U(S_\alpha)} q^j, \qquad C(\mathbf{S},t) := t\frac{q^{m}-1}{q-1} + \sum_{\alpha=1}^k \left( \sum_{i \in S_\alpha} q^i + \sum_{j \in U(S_\alpha)} q^j \right).
    \]
    Then $\mathbf{S}$ is called \emph{$t$-admissible} if $E(\mathbf{S}) > 0$ and $E(\mathbf{S}) \equiv C(\mathbf{S},t) \equiv 0 \pmod{q^m-1}$.
\end{definition}

\begin{proposition}\label{prop:exact}
For all $k\ge0$ and $t\in\Z$,
\[
W_{k,t}\;=\;(-1)^{nd(k+t)}\sum_{\mathbf{S}}\ \prod_{\alpha=1}^{k}\prod_{i\in S_\alpha}\bigl(T-\theta^{q^i}\bigr),
\]
the sum being over $t$-admissible $k$-tuples $\mathbf{S}=(S_1,\dots,S_k)$ of disconnected subsets of $\Z/nd$.
\end{proposition}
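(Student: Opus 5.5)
The plan is to follow the proof of Theorem~\ref{thm:SRB_upgrade}, but with the full polynomial $a(g,\Delta)$ in place of its individual coefficients. Apply Lemma~\ref{lem:domain} with $V = \bF_q[T]$ (an $\bF_p$-vector space) and $f[\phi] = a_\phi^k b_\phi^t$. This gives
\[
W_{k,t} = \sum_{(g,\Delta)\in\k\times\k^\times} a(g,\Delta)^k\, b(\Delta)^t ,
\]
where $\k = \bF_{\p^n}$ and $m = nd$. By Lemma~\ref{lem:motive-basis}.5 we have $b(\Delta)^t = (-1)^{ndt}\Delta^{-t\frac{q^m-1}{q-1}}$, because $\Norm_{\k/\bF_q}(\Delta) = \Delta^{(q^m-1)/(q-1)}$.

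Next, write $a(g,\Delta)^k$ as a product of $k$ copies of formula \eqref{eq:term} from Corollary~\ref{cor:rewrite} and expand. This gives a sum over $k$-tuples $\mathbf{S} = (S_1,\dots,S_k)$ of disconnected subsets of $\Z/m$. For a fixed tuple the summand is
\[
(-1)^{\sum_\alpha \#U(S_\alpha)} \prod_{\alpha=1}^k\prod_{i\in S_\alpha}(T-\theta^{q^i})\cdot g^{E(\mathbf{S})}\,\Delta^{-(C(\mathbf{S},t) - t\frac{q^m-1}{q-1})}.
\]
Combining this with the factor $\Delta^{-t\frac{q^m-1}{q-1}}$ from $b^t$, the total exponent of $\Delta$ is exactly $-C(\mathbf{S},t)$. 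The coefficient depending on $T$ does not involve $g$ or $\Delta$, so the sums over $g$ and $\Delta$ can be carried out term by term. By Lemma~\ref{lem:orth}, $\sum_{g\in\k} g^{E(\mathbf{S})}$ equals $-1$ if $E(\mathbf{S})>0$ and $q^m-1\mid E(\mathbf{S})$, and is $0$ otherwise. Likewise $\sum_{\Delta\in\k^\times}\Delta^{-C(\mathbf{S},t)}$ equals $-1$ if $q^m-1 \mid C(\mathbf{S},t)$ and is $0$ otherwise. The product of the two sums is therefore $1$ exactly when $\mathbf{S}$ is $t$-admissible, and $0$ otherwise.

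It remains to check the sign. Since $\#U(S_\alpha) = m - 2\#S_\alpha \equiv m \pmod 2$, we get $(-1)^{\sum_\alpha \#U(S_\alpha)} = (-1)^{mk}$. Together with $(-1)^{mt}$ this gives $(-1)^{nd(k+t)}$, which is the stated formula.

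The main point needing care is the case $E(\mathbf{S}) = 0$. The convention $0^0 = 1$ gives $\sum_g g^0 = q^m \cdot 1 = 0$ in $\k$, which is consistent with Lemma~\ref{lem:orth} and with the requirement $E(\mathbf{S})>0$ in the definition of admissibility. One should also confirm that interchanging the finite sums and using $\k[T]$-coefficients is legitimate; the final answer lies in $\bF_q[T]$ by Lemma~\ref{lem:motive-basis}.4, although the proposition itself does not need this. Everything else is bookkeeping.
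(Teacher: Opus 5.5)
Your proposal is correct and is the same argument as the paper's: reduce to the sum over $(g,\Delta)\in\k\times\k^\times$ via Lemma~\ref{lem:domain}, substitute \eqref{eq:term}, expand $a^k$, and evaluate the $g$- and $\Delta$-sums with Lemma~\ref{lem:orth}. You have also written out the exponent bookkeeping (total $\Delta$-exponent $-C(\mathbf{S},t)$) and the sign check via $\#U(S_\alpha)\equiv nd \pmod 2$, both of which the paper leaves implicit.
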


\begin{proof}
This follows by substituting \eqref{eq:term} into the definition of~$W_{k,t}$, expanding $a^k$, and summing over $(g,\Delta) \in \k \times \k^\times$.
\end{proof}



\begin{remark}\label{rmk:W-for-deg1}
    If $d=1$, then $\theta \in \bF_q$ and hence $T-\theta^{q^i} = \wp$ for all~$i$. In this case, Prop.~\ref{prop:exact} reduces to
    \[
    W_{k,t} = (-1)^{n(k+t)} \sum_{\mathbf{S}} \wp^{\# \mathbf{S}},
    \]
    where $\mathbf{S}$ runs over $t$-admissible $k$-tuples of disconnected subsets of $\Z/n$, and $\#\mathbf{S} := \sum_{\alpha=1}^k \# S_\alpha$.

    In particular, there are polynomials $f_{k,l,n} \in \bF_p[X] \subset A[X]$ such that for each prime~$\p$ of degree~1, $\Tr(\T_{\p}^n \, | \, \S_{k,l}) = f_{k,l,n}(\wp)$.
\end{remark}

\begin{remark}
    If $nd=1$, the only disconnected subset is the empty set. In this case Proposition~\ref{prop:exact} says that $W_{k,t} = 0$ unless $k > 0$ and $k \equiv t \equiv 0 \pmod{q-1}$, in which case $W_{k,t}=1$. This gives another way to prove the closed-form formula for degree one primes \cite[Thm.~3.13]{devries_traces}.
\end{remark}

\begin{remark}\label{rmk:ram-sharp}
    The strong Ramanujan bound is truly a level~1 phenomenon. The eigenvalue form of the usual Ramanujan bound for $A = \bF_q[T]$ is the inequality
    \[
    -v_{\tilde{\infty}}(\lambda) \leq \frac{d(k-2)}{2},
    \]
    where $\lambda$ is a Hecke eigenvalue on $\S_{k,l}(\Gamma)$ and $v_{\tilde{\infty}}$ is any valuation on~$K(\lambda)$ extending~$v_{\infty}$. If $\Gamma = \Gamma_0(\p)$, then there is a Drinfeld modular form $E_{\p} \in \S_{2,1}(\Gamma)$ such that $\T_{\l}E_{\p} = E_{\p}$ for all $\l \neq \p$ (see e.g.~\cite[\textsection 5.3.1]{BVdV-structure}). Thus $E_{\p}$ violates the strong Ramanujan bound, and in fact for any $n \geq 0$ this gives a Hecke eigenform $E_{\p}^{p^n} \in \S_{2p^n,p^n}(\Gamma)$ with $\T_{\l}$-eigenvalue $\l^{p^n-1}$ (here $\l$ is identified with its monic generator). Hence the usual Ramanujan bound is attained in infinitely many weights.
\end{remark}

\begin{remark}
    The trace form of the strong Ramanujan bound is weaker than the eigenvalue form, and may be satisfied trivially because of $p$-fold repetition of eigenvalues. In particular, this paper does not settle Conjecture~\cite[Conj.~5.24]{devries_traces} even when $v=v_\infty$.
    
    This issue rears its head when considering higher levels, as shown in \cite{hattori_constancy} for level $\Gamma_1(\mathfrak{n}\wp^r)$ with $\mathfrak{n} \trianglelefteq A$ nonzero, $\p \nmid \mathfrak{n}$, and $r \geq 2$. Similarly, it follows from the trace formula that traces of Hecke operators vanish identically at full level $\Gamma(\mathfrak{n})$ whenever $\mathfrak{n} \neq A$: if $\phi$ admits a full level $\mathfrak{n}$-structure, then it admits $\#\operatorname{GL}_2(A/\mathfrak{n})$ such structures, and this number is divisible by~$q$. 
    
    At level $\mathrm{GL}_2(\bF_q[T])$ with $q$ odd, however, we expect that Hecke eigenvalues are never repeated in a given weight and type. If this is true, the trace form of the strong Ramanujan bound is equivalent to the eigenvalue form (this is well known). On the other hand, the results of this paper do not imply the strong Ramanujan bound on the repeated Hecke eigenvalues in characteristic~2, of which there are known to be infinitely many \cite[Sec.~3]{devries_traces}. As such, it would be valuable to have a more conceptual proof that does not rely on the trace formula.
\end{remark}

From the point of view of class numbers, the strong Ramanujan bound is a remarkable statement. It gives many class number identities mod~$p$ that the author does not know how to prove directly (i.e., without using the results in this paper) in this generality.

\begin{example}\label{ex:class-number-rels}
    Let $q=5$, $\wp = T$, and $n=3$. We verify Corollary~\ref{cor:fixed-b} when $b=1$. An isogeny class is determined by its Frobenius polynomial $c_a(X) := X^2 - aX + T^3$, where $a = a_0 + a_1T$.
    
    By Yu's classification \cite{yu_isogs}, there is precisely one supersingular isogeny class, given by $a=0$, and $\# \Iso_{T^3}(0,1) = H(A[\sqrt{-T}]) = 1$. The other isogeny classes are ordinary and satisfy $a_0 \neq 0$. In each case, we have
    \[
    \# \Iso_{T^3}(a,1) = H(A[\pi_a]),
    \]
    where $\pi_a$ is a root of $c_a(X)$. Note that $\infty$ ramifies in $K(\pi_a)$ since the discriminant $D_a(T) := a(T)^2 - 4T^3$ has odd degree.
    We now have two cases.
    \begin{itemize}
        \item If $D_a$ is square-free, then $c_a(X)$ defines an elliptic curve $E_a$ over $\bF_5$, and $H(A[\pi_a]) = \# E_a(\bF_5)$. 
        \item If $D_a$ factors as $h_af_a^2$ with $f_a$ monic of degree~1, then $K(\pi_a)$ has genus~0. In this case $H(A[\pi_a]) = 6 - \left( \frac{h_a}{f_a} \right)$.
        This occurs precisely when $a \in \{ \pm (1+2T), \pm (3+4T)\}$. In each of these four cases we have $\{h_a,f_a\} = \{ T \pm 1 \}$ and hence $H(A[\pi_a]) = 7$.
    \end{itemize}

    The following table shows $\#\Iso_{T^3}(a,1)$ for all~$a$.

\begin{table}[ht]
\centering
\begin{tabular}{c | ccccc}
\diagbox{$a_0$}{$a_1$} & 0 & 1 & 2 & 3 & 4 \\
\hline
0      & 1 & 0 & 0 & 0 & 0 \\
1      & 6 & 6 & 7 & 3 & 9 \\
2      & 6 & 7 & 9 & 6 & 3 \\
3      & 6 & 3 & 6 & 9 & 7 \\
4      & 6 & 9 & 3 & 7 & 6 \\
\end{tabular}
\caption{The integers $\# \Iso_{T^3}(a,1)$ with $a = a_0 + a_1T$.}
\label{tab:diagonal_table}
\end{table}

Corollary~\ref{cor:fixed-b} for $b=1$ now gives 18 relations between these numbers mod~5, which can all be verified explicitly. Some of them can be seen easily: for instance, each column sums to~$25 \equiv_5 0$, and hence $\sum_a \#\Iso_{T^3}(a,1)a_1^{v_1} \equiv_5 0$ for all $0 \leq v_1 \leq 4$.
\end{example}

\section{Sharpness}\label{sec:sharpness}

\subsection{Discussion of results}

\subsubsection*{Sharpness of the strong Ramanujan bound}

Knowing that the strong Ramanujan bound holds, a natural question is: when is it sharp? By our description of the trace in terms of disconnected subsets, this becomes a purely combinatorial problem. 

The answer is surprising: sharpness in a given weight and type does not depend upon $\p^n$ itself, but only on the parity of~$nd$.

A role is also played by the integer $\nu_{k,l} := \frac{k+2-2l}{q-1}$. Note that when $q$ is odd, $\nu_{k,l}$ is even (resp.~odd) precisely when $l \equiv_{q-1} (k+2)/2$ (resp.~$l \equiv_{q-1} (k+2)/2 + (q-1)/2$), while if $q$ is even, we simply have $\nu_{k,l} \equiv_2 k$.


\begin{theorem}\label{thm:sharpness-criterion}
     Let $k+2 \equiv_{q-1} 2l$. The strong Ramanujan bound for $\T_{\p}^n$ acting on $\S_{k+2,l}$ is sharp if and only if 
    \[
    \begin{cases}
        2q \mid k-(q-1) \text{ and } \nu_{k,l} \text{ is odd} & \text{if } nd \text{ is odd}; \\
        q \mid k-(q-1) \text{ and } p \nmid N_{k,l} & \text{if } nd \text{ is even},
    \end{cases}
    \]
    where $N_{k,l} = \# \{ 0 \leq c \leq (k-(q-1))/q \ | \ c \equiv_{q+1} \nu_{k,l} - l \}$. In the first case, the top coefficient in the trace is $(-1)^{l-1}$. In the second case, the top coefficient is $N_{k,l}$.
\end{theorem}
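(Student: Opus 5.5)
The plan is to compute the coefficient of $T^{nd(k-(q-1))/2}$ in the trace exactly, starting from the Hecke trace formula. Put $t_j := j+l-k-1$. Then
\[
\Tr(\T_{\p}^n \mid \S_{k+2,l}) = \sum_{j}(-1)^j\binom{k-j}{j}W_{k-2j,\,t_j}\,\wp^{nj}.
\]
By the bound $\deg W_{K,t}\le nd(K-(q-1))/2$ established in Section~\ref{sec:srb-proof}, every summand has degree at most $nd(k-(q-1))/2$. So every $j$ can contribute to the top coefficient, and that coefficient is
\[
\sum_j (-1)^j\binom{k-j}{j}\,[T^{nd(k-2j-(q-1))/2}]\,W_{k-2j,t_j}.
\]
Since each factor $T-\theta^{q^i}$ is monic, Proposition~\ref{prop:exact} shows that $[T^{top}]W_{K,t}$ is $(-1)^{nd(K+t)}$ times the number, mod $p$, of $t$-admissible $K$-tuples $\mathbf S$ with $\sum_\alpha \#S_\alpha = nd(K-(q-1))/2$.

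\textbf{Step 1: characterise the extremal tuples.} In the proof of Theorem~\ref{thm:SRB_upgrade} we had $s_q(E(\mathbf S))\le ndK-2\sum\#S_\alpha$. In the extremal case this bound equals $nd(q-1)$, so Lemma~\ref{lem:digitsum} forces $s_q(E)=nd(q-1)$ with no carries. As $E$ is a carry-free sum of powers $q^j$ with $j<nd$, this gives $E=q^{nd}-1$. Equivalently, every $i\in\Z/nd$ is uncovered by exactly $q-1$ of the $S_\alpha$.

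To make this combinatorial, set $A_i:=\{\alpha : i\in S_\alpha\}\subseteq\{1,\dots,K\}$. Disconnectedness of each $S_\alpha$ means $A_i\cap A_{i+1}=\emptyset$. The index $i$ is covered by $\alpha$ exactly when $\alpha\in A_i\sqcup A_{i+1}$, so the condition becomes
\[
|A_i|+|A_{i+1}|=K-(q-1)\quad\text{for all } i .
\]
Thus extremal tuples are the same as closed walks $A_0,A_1,\dots,A_{nd-1},A_0$ in which consecutive sets are disjoint and have complementary sizes. This is the Kneser-graph connection.

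\textbf{Step 2: impose the second admissibility condition and split by parity.} The condition on $C(\mathbf S,t)$ reduces modulo $q^{nd}-1$ to a congruence of the form $\sum_i |A_i|q^i\equiv -t\frac{q^{nd}-1}{q-1}$ (up to the $E$-term, which is $\equiv 0$). The sizes $r_i=|A_i|$ alternate between two values $r$ and $r'$ with $r+r'=K-(q-1)$.

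For $nd$ odd, alternation forces $r=r'$, so $K-(q-1)=2r$ is even. The congruence then becomes $r\equiv -t\pmod{q-1}$, which is where $\nu_{k,l}$ enters. For $nd$ even there is a free parameter $c=r-r'$ (or similar), and the congruence becomes a condition on $c$ modulo $q+1$, using $\frac{q^{nd}-1}{q-1}\equiv$ the alternating sum structure modulo $q^2-1$. This should produce exactly the set counted by $N_{k,l}$.

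\textbf{Step 3: count the walks mod $p$ and assemble.} I would count closed walks mod $p$ by letting a Sylow $p$-subgroup of $\Sym_K$ act on the labels $\{1,\dots,K\}$. Orbits of size divisible by $p$ contribute nothing, so only walks built from label-blocks fixed by the action survive. Alternatively, one can use the Kneser spectrum $\lambda_s=(-1)^s\binom{K-r-s}{r-s}$ with multiplicities $\binom Ks-\binom K{s-1}$ and reduce $\sum_s \mathrm{mult}_s\lambda_s^{nd}$ via Lucas' theorem. The expected result is that the count is nonzero mod $p$ only when $q\mid K-(q-1)$ in the even case, and $2q\mid K-(q-1)$ in the odd case. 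This count must then be combined with the weights $(-1)^j\binom{k-j}{j}$ over $j$, again using Lucas, to show that the whole sum collapses to $(-1)^{l-1}$ in the odd case and to $N_{k,l}$ in the even case.

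I expect this last step to be the main obstacle: the mod-$p$ evaluation of the closed-walk counts, and the telescoping of the alternating binomial sum over $j$, with the bookkeeping of signs $(-1)^{nd(K+t)}$ and of the $q+1$-congruence in the even case. I would first verify the formula against the degree~$\le 3$ computations of \cite{devries_traces} and the $d=1$ simplification of Remark~\ref{rmk:W-for-deg1} to fix signs before attempting the general Lucas-theorem argument.
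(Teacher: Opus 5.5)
Your Steps~1 and~2 are correct and match the paper's reduction (Proposition~\ref{prop:W-top-coeff} and Lemma~\ref{lem:balanced}): the extremal tuples are exactly those whose coverage function is $R(K)$-balanced and lies in $\mathcal A_t$. But Step~3 carries the actual content of the theorem, and there you only state what you hope is true. That hope is also aimed at the wrong quantity. The $q$-divisibility conditions on $k$ are not a property of the individual counts $\omega_K(f)$ modulo $p$.

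For example, take $q=2$, $nd=2$, $k=5$. The only relevant balanced map is $f=(2,2)$. Both $\omega_5(f)=\binom52\binom32=30$ and $\binom41\omega_3(f-1)=4\cdot 6$ are even, while the top coefficient $N_{k,l}=1$ comes entirely from the $j=2$ term $\binom32\omega_1(f-2)=3$. For $nd$ odd, every $\omega_{q-1+2s}(s)$ with $s\ge1$ vanishes mod $p$ (Proposition~\ref{prop:W-sharpness}), so only one $j$ survives and nothing "telescopes". Evaluating each Kneser-type walk count mod $p$ (via spectra or a Sylow action) and then recombining with the weights $(-1)^j\binom{k-j}{j}$ would mean rederiving by brute force an identity you never formulate. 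In the even case, where the graphs are bipartite disjointness graphs between $c$-sets and $(R-c)$-sets, this is far from routine.

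The missing idea is an exact identity that collapses the $j$-sum before any reduction mod $p$.
\begin{itemize}
\item Reindex by $f\mapsto f-j$. This turns $R(k)$-balanced maps in $\mathcal A_{1-l}$ into $R(k-2j)$-balanced maps in $\mathcal A_{t_j}$; this step uses $k+2\equiv_{q-1}2l$. The signs combine to $(-1)^{nd(l-1)}(-1)^{j(nd+1)}$, so the top coefficient becomes $(-1)^{nd(l-1)}\sum_f\sum_{j}(-1)^{j(nd+1)}\binom{k-j}{j}\omega_{k-2j}(f-j)$, summed over $R(k)$-balanced $f\in\mathcal A_{1-l}$.
\item Proposition~\ref{prop:walk-identity} says the inner sum equals $\prod_i\binom{k-f(i+1)}{f(i)}$. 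It is proved by computing $\Tr(\Sym^k\Phi)$, with $u_i,w_i$ formal, in two ways. First, as $h_k(\pi,\bar\pi)=\sum_j(-1)^j\binom{k-j}{j}a^{k-2j}D^j$, using $D=(-1)^m\prod_i u_i$ and $a^K=\sum_f\omega_K(f)x_K^f$. Second, as a transfer-matrix sum using $\Sym^kF_i(X^jY^{k-j})=Y^j(u_iX+w_iY)^{k-j}$. One then compares coefficients of the distinct monomials $x_k^f$.
\item For balanced $f$, each factor equals $\binom{f(i)+q-1}{q-1}$. By Lucas this is $1$ if $q\mid f(i)$ and $0$ otherwise (Lemma~\ref{lem:B-is-indicator}). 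This is the sole source of the divisibility conditions.
\end{itemize}
Lemma~\ref{lem:balanced} and the congruence defining $\mathcal A_{1-l}$ then give $(-1)^{l-1}B(R(k)/2)$ for $nd$ odd and $N_{k,l}$ for $nd$ even.
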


\begin{remark}
    It follows from Theorem~\ref{thm:sharpness-criterion} that the strong Ramanujan bound for any fixed $\T_{\p^n}$ and~$l$ is sharp in infinitely many weights.
    
    If $q$ is even, the two conditions coincide, and are both equivalent to the binomial coefficient $\binom{k-R(k)/2}{R(k)/2}$ being odd, where $R(k) = k-(q-1)$ (as should be the case by \cite[Thm.~3.26]{devries_traces}). In this case the weight determines the type since $k+2 \equiv_{q-1} 2l$ has a unique solution, and 
    the density of weights $k$ for which the strong Ramanujan bound is attained is $1/2q$.

    If $q$ is odd, the congruence $k+2 \equiv_{q-1}2l$ ensures that $k$ is even. Then for a fixed~$l$, the density of weights congruent to~$2l$ in which the strong Ramanujan bound is attained is $\delta$, with
    \[
    \delta = \begin{cases}
        1/2q & \text{if } nd \text{ is odd};\\
        (p-1)/pq & \text{if } nd \text{ is even.}
    \end{cases}
    \]
    Moreover, if $q$ is odd, there are zero or two solutions to the equation $k+2 \equiv_{q-1} 2l$, namely $l_1$ and $l_2 = l_1 + (q-1)/2$, and if $nd$ is odd, the strong Ramanujan bound is attained on at most one of the spaces $\S_{k+2,l_1}$ and $\S_{k+2,l_2}$. 
\end{remark}

We can also determine sharpness of the bound on the $W_{k,t}$. In particular, we observe that if $nd$ is odd, it is attained only once.

\begin{proposition}\label{prop:W-sharpness}
    Suppose $nd$ is odd. Then $\deg W_{k,t} < nd(k-(q-1))/2$ unless $k = q-1$ and $t \equiv_{q-1} 0$, in which case $W_{k,t} = 1$.
\end{proposition}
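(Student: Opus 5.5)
Throughout, write $m = nd$, which is odd. I would start from the exact formula of Proposition~\ref{prop:exact}. Each $t$-admissible $k$-tuple $\mathbf{S}$ contributes a polynomial of degree exactly $\#\mathbf{S} := \sum_\alpha \#S_\alpha$, with leading coefficient~$1$. So it suffices to do two things: bound $\#\mathbf{S}$ for admissible $\mathbf{S}$, and control the number of tuples attaining the bound modulo~$p$.

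\textbf{Degree bound.} Admissibility forces $E(\mathbf{S})>0$ and $q^m-1 \mid E(\mathbf{S})$. Lemma~\ref{lem:digitsum} then gives $s_q(E(\mathbf{S})) \geq m(q-1)$. On the other hand, subadditivity of $s_q$ gives
\[
s_q(E(\mathbf{S})) \leq \sum_\alpha \#U(S_\alpha) = km - 2\#\mathbf{S}.
\]
Together these give $\#\mathbf{S} \leq m(k-(q-1))/2$, which recovers the strong bound. Equality forces both inequalities to be equalities. In particular, adding the $q^j$ for $j\in U(S_\alpha)$ produces no carries. Since only the $m$ positions $0,\dots,m-1$ occur and the digit sum is $m(q-1)$, every digit must equal $q-1$. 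Hence every $j \in \Z/m$ lies in exactly $q-1$ of the sets $U(S_\alpha)$, and $E(\mathbf{S}) = q^m-1$.

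Because $m$ is odd, each $\#U(S_\alpha) = m-2\#S_\alpha$ is odd, so every $U(S_\alpha)$ is nonempty. Equality also requires $k \equiv q-1 \pmod 2$, since $m(k-(q-1))/2$ must be an integer. So the top coefficient of $W_{k,t}$ in degree $m(k-(q-1))/2$ is $\pm$ the number $\mathcal{N}$ of ordered $k$-tuples with this exact covering property that also satisfy $C(\mathbf{S},t)\equiv 0 \pmod{q^m-1}$. It remains to show $p \mid \mathcal{N}$ unless $k=q-1$.

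\textbf{The case $k=q-1$.} Each $j$ must lie in all $k$ of the sets $U(S_\alpha)$, so every $S_\alpha=\emptyset$ and there is a single tuple. For it,
\[
C = (t+q-1)\frac{q^m-1}{q-1},
\]
which is divisible by $q^m-1$ exactly when $t\equiv 0 \pmod{q-1}$; otherwise $W_{q-1,t}=0$ anyway. The sign is $(-1)^{m(q-1+t)}$. This equals $1$: for $q$ odd, $q-1+t$ is even, and for $q$ even we are in characteristic~$2$. All terms have degree $0$ here, so $W_{q-1,t}=1$.

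\textbf{The case $k>q-1$ (main obstacle).} Here I must show $p \mid \mathcal{N}$, and I expect this to be the hard step. My first attempt would use the action of $\mathrm{Sym}_k$ on ordered tuples, which preserves both admissibility and the covering condition. The number $\mathcal{N}$ is then a sum over multisets $\{S_\alpha\}$ of multinomial coefficients $k!/\prod_S \mathrm{mult}(S)!$, and I would try to show each such coefficient is divisible by $p$.

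The covering condition says each $j$ lies in $\mathrm{Cov}(S_\alpha)$ for exactly $k-(q-1)$ indices $\alpha$. The sets $\mathrm{Cov}(S_\alpha)$ are unions of disjoint adjacent pairs in an odd cycle, so none of them is all of $\Z/m$. This should force a rigid structure on the multiset, e.g.\ via a double count of pairs $\{i-1,i\}$ around the odd cycle. I would then look for a Lucas-type argument showing that some base-$p$ digit carries in the multinomial.

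If this direct approach fails, I would instead combine it with the free action of rotation $j\mapsto j+1$ on $\Z/m$. That action preserves everything except the factor $\prod(T-\theta^{q^i})$, and that factor does not affect the leading coefficient. The aim would be to pair off the tuples into orbits of size divisible by~$p$.
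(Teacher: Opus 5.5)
Your reduction is correct as far as it goes. The digit-sum argument shows that in the equality case every $j\in\Z/m$ lies in exactly $q-1$ of the sets $U(S_\alpha)$; this is the paper's $R(k)$-balanced condition. The top coefficient is then $\pm\mathcal N$, and you handle the case $k=q-1$ correctly, including the sign. But the content of the proposition is the claim $p\mid\mathcal N$ for $k>q-1$, and you do not prove it. ``Should force a rigid structure'' and ``I would look for a Lucas-type argument'' are a plan, not an argument, so the proof has a genuine gap at its main step. Your fallback would not close it either. Orbits of the rotation $j\mapsto j+1$ have size dividing $m=nd$, which need not be divisible by $p$. For instance, with $q=2$, $nd=3$, $k=3$, the equality tuples are the $6$ orderings of $(\{0\},\{1\},\{2\})$. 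They form two rotation orbits, each of odd size $3$.

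The $\mathrm{Sym}_k$ approach does work, using two facts you already wrote down. Suppose a set $S$ occurs $c_S$ times in $\mathbf S$. Pick $j\in U(S)$, which is nonempty since $m$ is odd. All $c_S$ copies leave $j$ uncovered, so $c_S\le q-1<q=p^e$. By Legendre's formula, $v_p\bigl(k!/\prod_S c_S!\bigr)=\bigl(\sum_S s_p(c_S)-s_p(k)\bigr)/(p-1)$, where $s_p$ is the base-$p$ digit sum. If this vanished, adding the $c_S$ in base $p$ would produce no carries. Then $k$, like every $c_S$, would have no nonzero digit in positions $\geq e$, so $k<q$. Hence for $k>q-1$ every $\mathrm{Sym}_k$-orbit has size divisible by $p$, and $p\mid\mathcal N$.

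This differs from the paper's route. The paper reduces the top coefficient to $\omega_k(s)$ for the constant function $s=(k-(q-1))/2$, via Prop.~\ref{prop:W-top-coeff} and Lemma~\ref{lem:balanced}. It then shows $\omega_k(s)\equiv 0 \pmod p$ for $s\geq 1$ by induction, using the identity of Prop.~\ref{prop:walk-identity}: $\omega_k(s)\equiv B(s)^{nd}-B(s)-\sum_{j=1}^{s-1}\binom{k-j}{j}\omega_{k-2j}(s-j)$ with $B(s)\in\{0,1\}$. The orbit-counting argument is more elementary and self-contained. The paper's recursion has the advantage of reusing the $\Sym^k$ walk identity, which it needs anyway for Theorem~\ref{thm:sharpness-criterion}.
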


\subsubsection*{The smallest slope}
If $\p$ is a prime of degree~1, we can also bound the $\p$-adic valuations of the traces from below.

Every $\p$-adic slope of $\T_{\p}$ acting on $\S_{k,l}$ also occurs as a $\p$-adic slope of the Atkin operator $\U_{\p}$ acting on $\S_{k,l}(\Gamma_0(\p))$. In the case $\p = (T)$, Bandini--Valentino proved \cite[Prop.~6.2]{band-val_slopes} that all $\U_{T}$-slopes are at least $l-1$ and that slope $l-1$ has multiplicity at most~1, so the same holds for $\T_{T}$. We will determine in which weights slope~$l-1$ occurs, and what the next possible smallest slope is if it does not.

Let $v_{\mathfrak p}$ denote the $\mathfrak p$-adic valuation, normalized so that $v_{\mathfrak p}(\wp) = 1$.

\begin{theorem}[Dual bound]\label{thm:dual-bound}
    Let $\deg \p = 1$ and $n \geq 1$. Fix $l \in \{1,\ldots,q-1\}$ and let $k+2 \equiv_{q-1} 2l$ and $k+2 > 2l$. Then 
    \[
    v_{\p}(\Tr(\T_{\p}^n \, | \, \S_{k+2,l})) \geq n(l-1),
    \]
    and equality holds if and only if $p \nmid \binom{k-(l-1)}{l-1}$.
    In particular, whether equality holds depends only on $k \pmod{q}$.
\end{theorem}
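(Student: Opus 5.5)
We use Remark~\ref{rmk:W-for-deg1}: since $d=1$, each $W_{k',t}$ has the form $(-1)^{n(k'+t)}\sum_{\mathbf S}\wp^{\#\mathbf S}$. This makes $\Tr(\T_\p^n\mid \S_{k+2,l})$ a polynomial in $\wp$ with coefficients in $\bF_p$, so $v_\p$ of the trace is just the lowest power of $\wp$ that survives. Substituting into the trace formula gives
\[
\Tr(\T_\p^n\mid\S_{k+2,l})=\sum_{j}(-1)^j\binom{k-j}{j}\wp^{nj}\,W_{k-2j,\,j+l-k-1},
\]
and $W_{k-2j,t}$ contributes $\wp^{nj+\#\mathbf S}$ for each $t$-admissible $(k-2j)$-tuple $\mathbf S$. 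The plan is to lower-bound $nj+\#\mathbf S$ by $n(l-1)$, then compute the coefficient of $\wp^{n(l-1)}$ exactly.

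\textbf{Lower bound.} Write $m=n$ and $k'=k-2j$. For $\mathbf S=(S_1,\dots,S_{k'})$ with $\sigma_\alpha=\#S_\alpha$, admissibility requires
\[
C(\mathbf S,t)=t\tfrac{q^n-1}{q-1}+\sum_\alpha\Bigl(\sum_{i\in S_\alpha}q^i+\sum_{j\in U(S_\alpha)}q^j\Bigr)\equiv 0 \pmod{q^n-1}.
\]
The inner sum is $\frac{q^n-1}{q-1}-\sum_{i\in S_\alpha}q^{i-1}$, since $\mathrm{Cov}(S_\alpha)$ is the disjoint union of the pairs $\{i-1,i\}$. Hence the congruence reads
\[
\sum_\alpha\sum_{i\in S_\alpha}q^{i-1}\equiv (t+k')\tfrac{q^n-1}{q-1}\pmod{q^n-1}.
\]
Set $c\equiv t+k'=l-1-j \pmod{q-1}$. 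Then the left side, a multiset of powers of $q$ of total count $\#\mathbf S$, must be congruent to the repunit $c(1+q+\dots+q^{n-1})$. Applying Lemma~\ref{lem:digitsum}-style reasoning (digit sums mod $q^n-1$ are subadditive, and every nonzero residue class contains its minimal-digit-sum representative), we get $\#\mathbf S\ge n\cdot \bar c$. Here $\bar c\in\{0,\dots,q-2\}$ is the residue of $l-1-j$, with the value $q-1$ allowed if $\mathbf S$ must be nonempty. So $nj+\#\mathbf S\ge n(j+\bar c)\ge n(l-1)$ whenever $j\le l-1$, and trivially $nj\ge n(l-1)$ when $j\ge l-1$. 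This gives the inequality. The condition $E(\mathbf S)>0$, $E\equiv0$ also has to be tracked, but it only removes terms.

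\textbf{Equality.} Equality $nj+\#\mathbf S=n(l-1)$ forces $\#\mathbf S=n(l-1-j)$ with every digit of $\sum q^{i-1}$ equal to $l-1-j$, i.e.\ each position $i\in\Z/n$ lies in exactly $l-1-j$ of the $S_\alpha$. I would then count such tuples mod $p$ subject to $E(\mathbf S)\equiv0$, $E>0$. Note $E(\mathbf S)=\sum_\alpha\sum_{U(S_\alpha)}q^j$ has every digit equal to $k'-2(l-1-j)$, so it is the repunit times $k-2(l-1)$. The hypothesis $k+2>2l$ makes this positive, and it is $\equiv0 \bmod q^n-1$ because $k-2(l-1)\equiv0\pmod{q-1}$. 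So every tuple with the prescribed coverage is admissible.

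The main obstacle is the remaining count. It is $\sum_j(-1)^j\binom{k-j}{j}N_j$, where $N_j$ is the number of $(k-2j)$-tuples of disconnected subsets of $\Z/n$ covering each point exactly $l-1-j$ times. I would compute it mod $p$ by a transfer-matrix / generating-function argument on the cycle $\Z/n$, hoping that the alternating sum collapses, via a Vandermonde-type identity $\sum_j(-1)^j\binom{k-j}{j}\binom{k-2j}{\cdot}\cdots$, to $\pm\binom{k-(l-1)}{l-1}$. That would show the coefficient of $\wp^{n(l-1)}$ is $\pm\binom{k-(l-1)}{l-1}$ mod $p$, independent of $n$. The final claim then follows from Lucas' theorem, since $l-1<q$ means only the last base-$q$ digit of $k-(l-1)$ matters. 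A fallback is to check $n=1$ directly (all $S_\alpha=\emptyset$, with the closed formula from the preceding remark) and argue that the count is multiplicative over positions modulo $p$.
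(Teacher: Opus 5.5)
Your proof of the inequality and your reduction of the equality case are correct, and they follow essentially the paper's route (Proposition~\ref{prop:dual-bound-W}). That covers the digit-sum bound from Lemma~\ref{lem:digitsum_v2}, the fact that equality forces constant coverage $l-1-j$, and the check that all such tuples are admissible because $k+2>2l$.

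\textbf{The gap.} The step you flag as the main obstacle carries the entire equality criterion, and you leave it as a hope. What is needed is
\[
\sum_{j=0}^{l-1}(-1)^{(n+1)j}\binom{k-j}{j}\,\omega_{k-2j}(l-1-j)=\binom{k-(l-1)}{l-1}^{n},
\]
which is Proposition~\ref{prop:walk-identity} for the constant function $f=l-1$ on $\Z/n$. The paper does not get this from a Vandermonde manipulation. It recognizes the alternating sum as
\[
h_k(\pi,\bar\pi)=\sum_j(-1)^j\binom{k-j}{j}a^{k-2j}D^j=\Tr(\Sym^k\Phi),
\]
with $\Phi=\prod_iF_i$ taken over \emph{formal} variables $u_i,w_i$. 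The two sides are then compared monomial by monomial:
\begin{itemize}
\item Since $D=(-1)^n\prod_iu_i$, the term $a^{k-2j}D^j$ contributes $(-1)^{nj}\omega_{k-2j}(f-j)$ to the monomial $x_k^f$.
\item Expanding $\Tr\prod_i\Sym^kF_i$ as a transfer-matrix sum gives $x_k^f$ the coefficient $\prod_i\binom{k-f(i+1)}{f(i)}$.
\end{itemize}
Comparing coefficients of $x_k^f$ is legitimate because the $u_i,w_i$ are independent variables.

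Your fallback cannot replace this. The individual $N_j$ count closed walks of length $n$ in the Kneser graphs $K(k-2j,l-1-j)$, and they are not products over positions. Only the alternating combination factors, and seeing that is exactly the $\Sym^k$ insight.

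\textbf{Two further errors.} Both would derail the computation as you set it up.
\begin{itemize}
\item You dropped the sign $(-1)^{n(k'+t)}=(-1)^{n(l-1-j)}$ from Remark~\ref{rmk:W-for-deg1}. The correct weights are $(-1)^{(n+1)j}$, not $(-1)^j$, so for odd $n$ there is no alternation at all. With your signs the sum does not collapse. For instance, take $l=2$: then $N_0=(k-1)^n+(-1)^n(k-1)$ (proper $k$-colorings of the $n$-cycle) and $N_1=1$. The correct combination is $(-1)^n(k-1)^n$, while yours is $(k-1)^n+((-1)^n-1)(k-1)$. For $q=7$, $n=3$, $k=32$, yours vanishes mod~$7$ even though $k-1\not\equiv 0$.
\item The value you are aiming for is wrong. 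The coefficient of $\wp^{n(l-1)}$ is $(-1)^{n(l-1)}\binom{k-(l-1)}{l-1}^n$, not $\pm\binom{k-(l-1)}{l-1}$, and it depends on $n$. Only its vanishing mod~$p$ is independent of $n$.
\end{itemize}
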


\begin{remark}
    If $q = p$, then equality holds in Theorem~\ref{thm:dual-bound} if and only if $k+2 \not \equiv_q l+m$ for any $1 \leq m \leq l-1$. In particular, the density of weights for which the dual bound is sharp is then $1 - \frac{l-1}{q}$.
\end{remark}

Combined with work of Bandini--Valentino, we obtain more information about the smallest $\p$-adic slope, which is in line with \cite[Conj.~5.27]{devries_traces}.

\begin{corollary}\label{cor:smallest-slope}
    Suppose $\deg \p = 1$. Fix $l \in \{1,\ldots,q-1\}$ and suppose $\S_{k+2,l} \neq 0$. Then the smallest $\p$-adic slope of $\T_{\p}$ on $\S_{k+2,l}$ is $l-1$ if and only if $p \nmid \binom{k-(l-1)}{l-1}$. If slope~$l-1$ occurs, it does so with multiplicity~1. If it does not, then the smallest slope is at least $\alpha := l-1 + (q-1)/2$, and $\alpha$ occurs with multiplicity~0 or~2.
\end{corollary}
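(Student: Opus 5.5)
The plan is to combine Theorem~\ref{thm:dual-bound} with the Bandini--Valentino results, transferred from $\U_{\p}$ to $\T_{\p}$. By a change of variable $T \mapsto T - c$ we reduce to $\p = (T)$; this automorphism of $A$ preserves $\mathrm{GL}_2(A)$ and the Hecke algebra. Bandini--Valentino \cite[Prop.~6.2]{band-val_slopes} then give that every $\T_{\p}$-slope on $\S_{k+2,l}$ is $\geq l-1$, and that slope $l-1$ has multiplicity at most~$1$.

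Next we relate traces to slopes. Let $\lambda_1,\dots,\lambda_r$ be the eigenvalues (over $\overline{K_{\p}}$) of $\T_{\p}$ on $\S_{k+2,l}$, with valuations $s_i \geq l-1$. Then $\Tr(\T_{\p}^n) = \sum_i \lambda_i^n$. Suppose slope $l-1$ occurs. It occurs exactly once, say for $\lambda_1$, and then
\[
v_{\p}(\lambda_1^n) = n(l-1) < v_{\p}(\lambda_i^n) \quad (i>1),
\]
so $v_{\p}(\Tr \T_{\p}^n) = n(l-1)$ for every $n$. Conversely, suppose slope $l-1$ does not occur. Then every $s_i > l-1$, so $v_{\p}(\Tr \T_{\p}^n) > n(l-1)$ for all $n$. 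Theorem~\ref{thm:dual-bound} (applicable since $\S_{k+2,l}\neq 0$ forces $k+2>2l$) says equality holds exactly when $p \nmid \binom{k-(l-1)}{l-1}$. This proves the first two claims; it even suffices to take $n=1$.

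For the last claim, assume slope $l-1$ does not occur. To show the smallest slope is at least $\alpha = l-1+(q-1)/2$, I would use the $\U_{\p}$-structure: the Bandini--Valentino analysis expresses slopes as $l-1$ plus contributions constrained by the type and $\bF_q^\times$-action. Concretely, the $\U_T$-eigenvalues in type $l$ satisfy congruence conditions forcing valuations in $l-1 + \tfrac{q-1}{2}\Z_{\geq 0}$ (or more), since the relevant Newton polygon of $\U_T$ on $\S_{k,l}(\Gamma_0(T))$ has vertices spaced by multiples of $(q-1)/2$. Failing that, I would argue directly with traces: the power sums $\Tr\T_{\p}^n$ for all $n$ determine the multiset of slopes via Newton identities. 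Remark~\ref{rmk:W-for-deg1} gives $\Tr = f_{k,l,n}(\wp)$, and the combinatorics of admissible tuples shows that the lowest $\wp$-power beyond $n(l-1)$ is at least $n\alpha$. Multiplicity $0$ or $2$ at slope $\alpha$ would then follow from a parity or Galois-conjugacy argument: eigenvalues of slope $\alpha$ come in conjugate pairs, since $(q-1)/2$ relates to $\sqrt{-\wp}$-type ramification, as in the supersingular example. Alternatively, multiplicity $\leq 2$ could come from the same Bandini--Valentino bound applied one step up.

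The main obstacle is this last step. The first two claims are formal consequences of the cited results, but the gap $(q-1)/2$ and the $0$-or-$2$ multiplicity require finer input: either an extension of Bandini--Valentino's Newton polygon analysis, or a careful evaluation of $f_{k,l,n}$ in low $\wp$-degree using disconnected subsets with $\#\mathbf{S}$ small.
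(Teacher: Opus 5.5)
Your argument for the first two claims is correct and is essentially the paper's. You reduce to $\wp=T$, use \cite[Prop.~6.2]{band-val_slopes} to get that all $\T_T$-slopes are $\ge l-1$ with slope $l-1$ of multiplicity at most $1$, and compare with Theorem~\ref{thm:dual-bound}. Your ultrametric remark, that a unique eigenvalue of slope $l-1$ dominates $\sum_i\lambda_i^n$, is a self-contained substitute for the paper's appeal to the fact that a smallest slope not repeated $p$ times can be read off from traces. You should still say why Bandini--Valentino's statement, which concerns $\U_T$ on $\S_{k+2,l}(\Gamma_0(T))$, applies to $\T_T$ at level one. The reason is that, via the oldforms, each $\T_T$-slope below the middle reappears as a $\U_T$-slope with at least the same multiplicity.

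The third claim is where the proposal has a genuine gap: none of the routes you sketch proves it.
\begin{itemize}
\item Nothing in the cited results confines the slopes to $l-1+\frac{q-1}{2}\Z_{\ge 0}$.
\item The power sums $\Tr\T_{\p}^n$ do not determine the slopes via Newton's identities in characteristic $p$; an eigenvalue repeated $p$ times is invisible to all of them. You also never derive the claimed gap in the $\wp$-adic expansion of $f_{k,l,n}$.
\item A conjugacy/ramification argument could at best force even multiplicity when $\alpha\notin\Z$ (i.e.\ $q$ even). It gives neither the bound $\ge\alpha$ nor multiplicity $\le 2$, and gives nothing at all for odd $q$.
\end{itemize}

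The missing idea is the finer Newton-polygon information in \cite[Sec.~6]{band-val_slopes}. Let $X^e+\ell_1X^{e-1}+\ell_2X^{e-2}+\cdots$ be the characteristic polynomial of $\U_T$ in type $l$. Then $v_T(\ell_i)\ge i(l-1)+i(i-1)(q-1)/2$, and if slope $l-1$ is absent, moreover $v_T(\ell_1)\ge (l-1)+(q-1)$. From these you get three facts:
\begin{itemize}
\item $v_T(\ell_1)>\alpha$;
\item $v_T(\ell_2)\ge 2\alpha$;
\item $v_T(\ell_i)/i\ge (l-1)+(i-1)(q-1)/2>\alpha$ for $i\ge 3$.
\end{itemize}
So the smallest $\U_T$-slope is $\ge\alpha$, with equality if and only if $v_T(\ell_2)=2\alpha$. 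In that case the first Newton segment runs from $0$ to $2$, so $\alpha$ has multiplicity exactly $2$.

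To transfer this multiplicity to $\T_T$, you must also check that $\alpha$ is not the middle slope $k/2$, where the $\U_T$- and $\T_T$-multiplicities need not agree. Indeed, $\alpha=k/2$ forces $k+2=2l+q-1$. Then $\S_{k+2,l}\neq0$ forces $l=1$. But on $\S_{q+1,1}=\langle h\rangle$ the slope is $0=l-1$, contradicting the assumption that slope $l-1$ is absent.
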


\begin{proof}
    Since the ring automorphisms of $A$ act transitively on primes of degree~1, we may assume $\wp = T$. By \cite[Prop.~6.2]{band-val_slopes} we know that in type~$l$, the smallest $\U_T$-slope at level $\Gamma_0(T)$ is $l-1$, and it occurs with multiplicity $\leq 1$. Since each $\T_{T}$-slope occurs as a $\U_T$-slope, with $\U_T$-multiplicity at least the $\T_{T}$-multiplicity (and equal multiplicities if the slope is not $k/2$), we know that the smallest slope of $\T_{T}$ is not repeated $p$~times and can hence be read off from the traces, cf.~\cite[Prop.~4.2]{devries_newton}. Therefore Theorem~\ref{thm:dual-bound} determines when this slope occurs.

    Now suppose slope $l-1$ does not occur. In the notation of \cite[Sec.~6]{band-val_slopes} (but with our normalization), the characteristic polynomial of~$\U_T$ is then
    \[
    X^n + \ell_1 X^{n-1} + \ell_2 X^{n-2} + \ldots + \ell_{n-r}X^r,
    \]
    and $v_T(\ell_i) \geq i(l-1) + i(i-1)(q-1)/2$. Moreover, our assumption implies (by \emph{loc.~cit.}) that $v_T(\ell_1) \geq l-1 + (q-1)$. Considering the Newton polygon, we see that the smallest $\U_T$-slope that can occur is $\alpha$, which happens if and only if $v_T(\ell_2) = 2\alpha$, in which case the slope has multiplicity~2. The multiplicity of $\T_{T}$-slope $\alpha$ is then also 0 or~2, since under our assumptions $\alpha \neq k/2$: indeed, $\alpha = k/2$ would force $k+2=2l+q-1$, in which case $\dim \S_{k+2,l} > 0$ implies $l = 1$. But on $\S_{q+1,1} = \langle h \rangle$ the smallest slope is known to be $0 = l-1$. 
\end{proof}

\begin{remark}
    After proving Corollary~\ref{cor:smallest-slope}, we noted that it is also a consequence of \cite[Prop.~6.2]{band-val_slopes} and \cite[Thm.~3.13]{devries_traces}. However, the proof presented here is different and has some advantages: it actually gives the coefficient of the lowest $\p$-adic term in the trace of $\T_{\p}^n$ for any $n \geq 1$, and it is possible that the method can be adapted to treat primes of higher degree.
\end{remark}

\begin{example}
    If $l=1$, slope~0 always occurs. One does not need Corollary~\ref{cor:smallest-slope} to see this: it also follows from the theory of cusp forms with a simple zero at the cusp.
    
    If $l=2$, our result states that slope $1$ occurs in $\S_{k+2,2}$ if and only if $k \not \equiv_p 1$. This agrees with \cite[Ex.~3.23]{devries_traces}, which states that for any $q > 2$ and $m \geq 0$,
    \[
    \Tr(\T_{T} \, | \, \S_{2(q+1) + m(q-1),2}) \equiv (m+1)T \pmod{T^q}.
    \]
    If $l=q-1$, we find that the dual bound is attained precisely when $k+2 \equiv_q -2, -1$. One famous example of this is $\Delta \in \S_{q^2-1,0}$, which satisfies $\T_{\p}\Delta = \wp^{q-2}\Delta$ for all~$\p$.
\end{example}

\subsection{Combinatorics}

In this subsection, we prove Theorems~\ref{thm:sharpness-criterion} and~\ref{thm:dual-bound}. The strategy will be to first compute the top coefficient of the $W_{k,t}$ via Prop.~\ref{prop:exact}, and then combine these into the top coefficient of the Hecke trace via the trace formula. From the shape of Prop.~\ref{prop:exact}, it will be obvious to the reader that this will require counting certain $t$-admissible $k$-tuples of disconnected subsets. To perform this count, we need to do some bookkeeping.

\begin{definition}
    Let $\mathbf{S} = (S_1,\ldots,S_k)$ be a $k$-tuple of disconnected subsets of $\mathbb Z/m$. Define its \emph{coverage function} $f_{\mathbf{S}}$ as follows:
    \[
    f_{\mathbf{S}} \: \mathbb Z/m \longrightarrow \mathbb Z_{\geq 0}, \qquad i \longmapsto \# \{\alpha \in \{1,\ldots,k\} \ | \ i \in S_{\alpha} \}.
    \]
    If $f\: \mathbb Z/m \to \mathbb Z$ is any function, we define
    \[
    \omega_k(f) := \# \{ \mathbf{S} = (S_1,\ldots,S_k) \ | \ \text{each } S_\alpha \subset \mathbb Z/m \text{ is disconnected and } f_{\mathbf{S}} = f \}.
    \]
\end{definition}


    For a graph-theoretic interpretation of the numbers $\omega_k(f)$, see Remark~\ref{rmk:kneser}.

\begin{remark}
    The fact that each $S_i$ is disconnected implies that $f_{\mathbf{S}}(i) + f_{\mathbf{S}}(i+1) \leq k$ for all~$i\in \mathbb Z/m$. In particular, $\omega_k(f) = 0$ if $f$ does not satisfy this property. Of course, the same holds if $f(i) < 0$ for some~$i$.
    Thus, $\omega_k$ has finite support as a function on $\{ f \: \Z/m \to \Z\}$.
\end{remark}

In the following, we will fix $m \geq 1$ and let $\Phi = \prod_{i \in \Z/m} F_i$, where $F_i := \begin{pmatrix}
    0 & u_i \\ 1 & w_i
\end{pmatrix}$. We write $a := \Tr \Phi$ and $D := \det \Phi$. This is motivated by Section~\ref{sec:srb-proof}, and we will return to this setting eventually, but for now we actually want to consider the $u_i$ and $w_i$ as formal variables over~$\Z$. The reason for this will become clear in the proof of Prop.~\ref{prop:walk-identity}.

\begin{lemma}\label{lem:tr-a^k}
    We have
    \[
    a^k = \sum_f \omega_k(f) x_k^f, \qquad x_k^f := \prod_{j \in \mathbb Z/m} u_j^{f(j)} w_j^{k-f(j)-f(j+1)},
    \]
    where the sum is over all functions $f\: \Z/m \to \Z_{\geq 0}$.
\end{lemma}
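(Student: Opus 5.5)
We derive this from Proposition~\ref{prop:a-expr}, which holds verbatim for formal variables, since its proof only expands the trace of a product of $2\times 2$ matrices. With $m$ in place of $nd$, it gives
\[
a = \sum_{S \text{ disconnected}} \prod_{i \in S} u_i \prod_{j \in U(S)} w_j .
\]
Raising this to the $k$-th power and expanding the product, we get
\[
a^k = \sum_{\mathbf{S}=(S_1,\ldots,S_k)} \prod_{\alpha=1}^k \Bigl(\prod_{i \in S_\alpha} u_i \prod_{j \in U(S_\alpha)} w_j\Bigr),
\]
where each $S_\alpha$ ranges over disconnected subsets of $\Z/m$.

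The main step is to compute the monomial attached to a given $\mathbf{S}$. The exponent of $u_j$ is $\#\{\alpha : j \in S_\alpha\} = f_{\mathbf{S}}(j)$. The exponent of $w_j$ is the number of $\alpha$ with $j \in U(S_\alpha)$, which is $k$ minus the number of $\alpha$ with $j \in \mathrm{Cov}(S_\alpha)$.

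We have $j \in \mathrm{Cov}(S_\alpha) = \bigcup_{i\in S_\alpha}\{i-1,i\}$ if and only if $j \in S_\alpha$ or $j+1 \in S_\alpha$. Because $S_\alpha$ is disconnected, these two events are mutually exclusive. Hence the number of such $\alpha$ is exactly $f_{\mathbf{S}}(j) + f_{\mathbf{S}}(j+1)$. This disjointness is the only point needing care; in the degenerate cases $m=1,2$ one should check that $\mathrm{Cov}$ is read with multiplicity correctly. For $m=1$, the set $\{0\}$ is not disconnected, since $0+1=0$. For $m=2$, $\{i-1,i\}$ consists of two distinct elements, so the count still holds.

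The monomial for $\mathbf{S}$ is therefore $x_k^{f_{\mathbf{S}}}$. Grouping the tuples $\mathbf{S}$ by their coverage function $f_{\mathbf{S}}$, each $f$ occurs exactly $\omega_k(f)$ times by definition of $\omega_k$, which gives
\[
a^k = \sum_f \omega_k(f) x_k^f .
\]
The exponent $k - f(j) - f(j+1)$ is automatically nonnegative whenever $\omega_k(f) \neq 0$, as the preceding remark notes, so $x_k^f$ is a genuine monomial in every nonzero term.
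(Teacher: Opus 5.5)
Your proposal is correct and follows the paper's own proof: apply Proposition~\ref{prop:a-expr} over the formal variables, expand the $k$-th power, read off the exponents of $u_j$ and $w_j$, and group the tuples by coverage function. Your remark that $j\in S_\alpha$ and $j+1\in S_\alpha$ are mutually exclusive, which gives the $w_j$-exponent $k-f(j)-f(j+1)$, spells out the one step the paper leaves implicit.
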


\begin{proof}
    The expression from Prop.~\ref{prop:a-expr} holds also over $\Z[u_i,w_i]$. Raising it to the $k$-th power yields the expression
    \[
    a^k = \sum_{\mathbf{S}} \prod_{\alpha = 1}^{k} \prod_{i \in S_\alpha} u_i \prod_{j \in U(S_\alpha)} w_j.
    \]
    In the summand corresponding to $\mathbf{S}$, each $u_i$ occurs with multiplicity $f_{\mathbf{S}}(i)$ and each $w_j$ with multiplicity $k - f_{\mathbf{S}}(j) - f_{\mathbf{S}}(j+1)$. Grouping the tuples with the same coverage function gives the claimed expression.
\end{proof}

\begin{lemma}\label{lem:tr-sym-phi}
    Keeping the previous notation, we have
    \[
    \sum_{0 \leq j \leq k/2} (-1)^j \binom{k-j}{j} a^{k-2j} D^{j} = \sum_f \prod_{j \in \Z/m} \binom{k-f(j+1)}{f(j)} x_k^f,
    \]
    where the sum is over all functions $f\: \Z/m \to \Z_{\geq 0}$ such that $f(i) + f(i+1) \leq k$ for all~$i$.
\end{lemma}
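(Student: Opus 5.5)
The plan is to recognize the left-hand side as the trace of the $k$-th symmetric power of $\Phi$, and then to compute that trace as a trace of a product of matrices by expanding it along closed paths. We work over $R := \Z[u_i,w_i]$ with $u_i,w_i$ formal variables, so that all matrices have entries in an integral domain.

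\emph{Step 1 (identify the left side).} For a $2\times 2$ matrix $M$ over a domain with eigenvalues $\lambda,\mu$ (in an algebraic closure of the fraction field), $\Tr \Sym^k M = \sum_{i=0}^k \lambda^{k-i}\mu^i$. We have the classical identity
\[
\sum_{i=0}^k \lambda^{k-i}\mu^i = \sum_{0\le j\le k/2}(-1)^j\binom{k-j}{j}(\lambda+\mu)^{k-2j}(\lambda\mu)^j .
\]
One proves it by induction from $h_k = (\lambda+\mu)h_{k-1} - \lambda\mu\, h_{k-2}$ together with Pascal's rule $\binom{k-j}{j} = \binom{k-1-j}{j} + \binom{k-1-j}{j-1}$. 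Applying this to $M=\Phi$, with $a=\Tr\Phi$ and $D=\det\Phi$, the left side equals $\Tr\Sym^k\Phi$.

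\emph{Step 2 (multiplicativity).} Let $\Sym^k$ denote the action of a matrix on homogeneous polynomials of degree $k$ in $e_1,e_2$, written in the monomial basis $e_1^r e_2^{k-r}$ for $0\le r\le k$. This construction is functorial, so $\Sym^k\Phi = \prod_{i\in\Z/m}\Sym^k F_i$, with the product taken in the same order as in $\Phi$.

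\emph{Step 3 (entries of $\Sym^k F_i$).} We have $F_i e_1 = e_2$ and $F_i e_2 = u_i e_1 + w_i e_2$. Hence
\[
e_1^r e_2^{k-r}\mapsto e_2^r(u_ie_1+w_ie_2)^{k-r}=\sum_s \binom{k-r}{s}u_i^s w_i^{k-r-s}\, e_1^s e_2^{k-s}.
\]
So the $(s,r)$ entry of $\Sym^k F_i$ is $\binom{k-r}{s}u_i^s w_i^{k-r-s}$. This entry vanishes unless $s+r\le k$.

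\emph{Step 4 (expand the trace).} Expanding the trace of the product over index sequences, as in the proof of Prop.~\ref{prop:a-expr}, gives
\[
\Tr\prod_i \Sym^k F_i = \sum_{f\colon \Z/m\to\{0,\dots,k\}}\ \prod_{i}(\Sym^k F_i)_{f(i),f(i+1)} .
\]
By Step 3, each factor is $\binom{k-f(i+1)}{f(i)}u_i^{f(i)}w_i^{k-f(i)-f(i+1)}$. The product of these factors is exactly $\prod_j\binom{k-f(j+1)}{f(j)}\,x_k^f$. Terms with $f(i)+f(i+1)>k$ vanish, because the binomial coefficient is then zero. This gives the stated range of summation, and the right side follows.

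\emph{Main obstacle.} The step needing most care is the index bookkeeping. One must check that the row/column convention for the entries of $\Sym^k F_i$ matches the cyclic order of the product $F_0F_1\cdots F_{m-1}$, so that the binomial comes out as $\binom{k-f(j+1)}{f(j)}$ and not $\binom{k-f(j-1)}{f(j)}$. As a consistency check, for $k=1$ the formula must reduce to Prop.~\ref{prop:a-expr}. In that case $f=\mathbf 1_S$, the exponent of $w_j$ is $1-f(j)-f(j+1)$, and this matches $U(S)$ as defined via $\mathrm{Cov}(S)=\bigcup_{i\in S}\{i-1,i\}$. If the convention came out reversed, I would transpose, i.e.\ use the dual basis, which does not change traces.
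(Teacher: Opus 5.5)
Your proposal is correct and follows the paper's proof: identify the left side with $\Tr \Sym^k \Phi$, factor it as $\prod_i \Sym^k F_i$ by functoriality, compute the entries of $\Sym^k F_i$ in the monomial basis, and expand the trace over cyclic index sequences as in Prop.~\ref{prop:a-expr}. The only differences are that you prove the identity $h_k(\lambda,\mu)=\sum_j(-1)^j\binom{k-j}{j}(\lambda+\mu)^{k-2j}(\lambda\mu)^j$ via the three-term recurrence rather than citing it, and that you verify the index convention. With your row/column convention it does come out as $\binom{k-f(j+1)}{f(j)}$, so no transposition is needed.
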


\begin{proof}
    Write $(X-\pi)(X-\bar{\pi}) = X^2 - aX + D$ and let $h_k(x,y) = \sum_{i=0}^k x^iy^{k-i}$. Then by \cite[Lem.~3.3]{devries_traces}, the left-hand side equals $h_k(\pi,\bar{\pi})$. On the other hand, $h_k(\pi,\bar{\pi}) = \Tr(\Sym^k\Phi)$; this will give the right-hand side, as follows. We have $\Sym^k \Phi = \prod_{i=0}^{nd-1}\Sym^k F_i$. Denote by $X, Y$ the standard basis of $M := \Z^2$, so $F_i$ sends $X \mapsto Y$ and $Y \mapsto u_iX + w_iY$. In the basis $\{X^jY^{k-j} \ | \ 0 \leq j \leq k \}$ of $\Sym^k M$, this gives
    \[
    \Sym^k F_i (X^jY^{k-j}) = Y^j (u_iX+w_iY)^{k-j} = \sum_{j'=0}^k \binom{k-j}{j'}u_i^{j'}w_i^{k-j-j'}X^{j'}Y^{k-j'}.
    \]
    Just like in the proof of Prop.~\ref{prop:a-expr}, we obtain
    \[
    \Tr (\Sym^k \Phi) = \Tr \bigg{(} \prod_{i \in \Z/m} \Sym^k F_i\bigg{)} = \sum_f \prod_{j \in \Z/m} \binom{k-f(j+1)}{f(j)} x_k^f,
    \]
    the sum being over all functions $f\: \Z/m \to \{0,1,\ldots,k\}$. If $f(j) + f(j+1) > k$, then the binomial coefficient will be zero, which completes the proof.
\end{proof}

We are now ready to prove a key identity. For a function $f$ and an integer $j$, $f+j$ will denote the function sending $i \mapsto f(i) + j$.

\begin{proposition}\label{prop:walk-identity}
    Let $k \geq 1$ and let $f\: \Z/m \to \Z_{\geq 0}$ satisfy $f(j) + f(j+1) \leq k$ for all~$j$. Then
    \[
    \sum_{j \geq 0} (-1)^{(m+1)j} \binom{k-j}{j} \omega_{k-2j}(f-j) = \prod_{j \in \Z/m} \binom{k - f(j+1)}{f(j)}.
    \]
\end{proposition}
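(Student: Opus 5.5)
The identity will follow by comparing coefficients of monomials on the two sides of Lemma~\ref{lem:tr-sym-phi}, viewed in $\Z[u_i,w_i]$. Lemma~\ref{lem:tr-sym-phi} already writes the left-hand side $\sum_j (-1)^j\binom{k-j}{j}a^{k-2j}D^j$ as $\sum_f \prod_j \binom{k-f(j+1)}{f(j)}\,x_k^f$. So the task is to expand the same left-hand side a second way, using Lemma~\ref{lem:tr-a^k} for $a^{k-2j}$ together with an explicit formula for $D$.

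For $D$, note that $\det F_i = -u_i$, so $D = (-1)^m\prod_{i\in\Z/m}u_i$ and $D^j = (-1)^{mj}\prod_i u_i^j$. By Lemma~\ref{lem:tr-a^k},
\[
a^{k-2j}D^j = (-1)^{mj}\sum_g \omega_{k-2j}(g)\prod_{i} u_i^{g(i)+j}\,w_i^{k-2j-g(i)-g(i+1)}.
\]
Now substitute $f = g+j$. Then $u_i$ appears with exponent $f(i)$, and $w_i$ appears with exponent
\[
k-2j-(f(i)-j)-(f(i+1)-j) = k-f(i)-f(i+1).
\]
Hence this monomial is exactly $x_k^f$, and
\[
a^{k-2j}D^j = (-1)^{mj}\sum_f \omega_{k-2j}(f-j)\,x_k^f .
\]
Here $\omega_{k-2j}(f-j)=0$ automatically whenever $f-j$ takes a negative value. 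Summing over $j$ with the weight $(-1)^j\binom{k-j}{j}$, the left-hand side of Lemma~\ref{lem:tr-sym-phi} becomes
\[
\sum_f \Bigl(\sum_j (-1)^{(m+1)j}\binom{k-j}{j}\,\omega_{k-2j}(f-j)\Bigr)x_k^f .
\]

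It remains to compare coefficients. The map $f\mapsto x_k^f$ is injective on functions $f\colon\Z/m\to\Z_{\ge0}$ with $f(i)+f(i+1)\le k$, because the exponents of the $u_i$ recover $f$. Since the $u_i,w_i$ are independent formal variables over $\Z$, this is why the paper works formally rather than in $\k[T]$. The terms with $f(i)+f(i+1)>k$ vanish on both sides: on the right because Lemma~\ref{lem:tr-sym-phi} restricts the sum, and on the left because $\omega_{k-2j}(f-j)=0$ when $(f-j)(i)+(f-j)(i+1)>k-2j$. Equating the coefficients of $x_k^f$ then gives the proposition.

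The main obstacle is bookkeeping rather than ideas. One has to get the sign $(-1)^{mj}$ from $\det F_i=-u_i$ right, and check that the $j$-ranges match, namely $0\le j\le k/2$ versus $j\ge0$ with $\binom{k-j}{j}=0$ beyond that range. One also has to handle the edge case $k-2j=0$, where $a^0=1$ should correspond to $\omega_0(0)=1$, the empty tuple.
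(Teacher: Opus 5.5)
Your proposal is correct and follows the paper's proof essentially step for step. Write $D=(-1)^m\prod_i u_i$, expand $a^{k-2j}D^j$ via Lemma~\ref{lem:tr-a^k} as $(-1)^{mj}\sum_f \omega_{k-2j}(f-j)\,x_k^f$, substitute into Lemma~\ref{lem:tr-sym-phi}, and compare coefficients using injectivity of $f\mapsto x_k^f$ in $\Z[u_i,w_i]$. One small remark: for $j>k$ the terms drop out because $\omega_{k-2j}$ with negative index counts no tuples, not because $\binom{k-j}{j}=0$, which fails under the generalized binomial convention; this is harmless.
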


\begin{proof}
    We have $\det \Phi = \prod \det F_i = (-1)^{m} \prod u_i$. Together with Lemma~\ref{lem:tr-a^k}, this gives
    \[
    a^{k-2j}D^{j} = (-1)^{mj} \sum_f \omega_{k-2j}(f) x_k^{f+j}.
    \]
    Plugging this into Lemma~\ref{lem:tr-sym-phi} gives
    \[
    \sum_f \sum_{0 \leq j \leq k/2} (-1)^{(m+1)j} \binom{k-j}{j} \omega_{k-2j}(f)x_k^{f+j} = \sum_f \prod_{j \in \Z/m} \binom{k - f(j+1)}{f(j)} x_k^f,
    \]
    both sums being over functions $f\: \Z/m \to \Z_{\geq 0}$ such that $f(i) + f(i+1) \leq k$.
    We can view this as an identity in $\Z[u_i,w_i]$, in which case the map $f \mapsto x_k^f$ is injective; comparing coefficients completes the proof.
\end{proof}

We now turn to the top coefficient of $W_{k,t}$. For the rest of this section, we fix a prime $\p$ of degree~$d$ and an integer $n \geq 1$. For ease of notation, write $R(k) := k-(q-1)$. We will describe $[T^{ndR(k)/2}]W_{k,t}$ in terms of coverage functions, which leads to the following notion.

\begin{definition}
    A function $f\: \Z/m \to \Z$ is called \emph{$r$-balanced} if $f(i) + f(i+1) = r$ for all $i \in \Z/m$.
\end{definition}

\begin{proposition}\label{prop:W-top-coeff}
    Suppose $ndR(k)$ is even. Then we have
    \[
    [T^{ndR(k)/2}]W_{k,t} = (-1)^{nd(k+t)} \sum_{R(k)\text{-balanced }f\in \mathcal{A}_t } \omega_k(f),
    \]
    where 
    \[
    \mathcal{A}_t = \left\{ f \: \Z/nd \to \Z \ \Bigg{|} \ \sum_{j=0}^{nd-1} f(j)q^j \equiv_{q^{nd}-1} -t \frac{q^{nd}-1}{q-1} \right\}.
    \]
\end{proposition}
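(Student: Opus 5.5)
Starting from Proposition~\ref{prop:exact}, we write $W_{k,t}$ as a sum over $t$-admissible $k$-tuples $\mathbf{S}$ of the products $\prod_\alpha\prod_{i\in S_\alpha}(T-\theta^{q^i})$. Each such product is monic of degree $\#\mathbf{S} := \sum_\alpha \#S_\alpha$. Hence $[T^{ndR(k)/2}]W_{k,t}$ equals $(-1)^{nd(k+t)}$ times the number of $t$-admissible $\mathbf{S}$ with $\#\mathbf{S} = ndR(k)/2$, provided we show that no admissible $\mathbf{S}$ has $\#\mathbf{S} > ndR(k)/2$. Strictly speaking we do not even need that: Theorem~\ref{thm:SRB_upgrade} (equivalently the strong Ramanujan bound) already bounds the degree, but we still need it to know that nothing of higher degree contributes. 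So the plan is first to show $\#\mathbf{S}\le ndR(k)/2$ for admissible $\mathbf{S}$, and then to characterize the equality case in terms of $f_{\mathbf{S}}$.

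For the bound, let $f=f_{\mathbf{S}}$. We have $E(\mathbf{S}) = \sum_j (k - f(j) - f(j+1)) q^j$, because $j\in U(S_\alpha)$ exactly when neither $j$ nor $j+1$ lies in $S_\alpha$. The coefficients $k-f(j)-f(j+1)\ge 0$ may exceed $q-1$, but carries only decrease digit sums, and reducing $q^j$ modulo $q^{nd}-1$ preserves digit sums. So the digit-sum argument of Lemma~\ref{lem:digitsum}, applied to $E(\mathbf{S})>0$ divisible by $q^{nd}-1$, gives
\[
nd(q-1) \le s_q(E(\mathbf{S})) \le \sum_j \bigl(k-f(j)-f(j+1)\bigr) = nd\,k - 2\#\mathbf{S}.
\]
This yields $\#\mathbf{S}\le ndR(k)/2$. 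Equality forces $\sum_j (k-f(j)-f(j+1)) = nd(q-1)$ with no carries. We then argue that equality is exactly the condition $k-f(j)-f(j+1) = q-1$ for every $j$. The reduction of $E$ modulo $q^{nd}-1$ is then a number with all $nd$ digits at most $q-1$, divisible by $q^{nd}-1$ and nonzero, so it equals $q^{nd}-1$ and has all digits $q-1$. This requires no carries anywhere, so every coefficient equals $q-1$. Conversely, if $f$ is $R(k)$-balanced, then $E(\mathbf{S}) = (q-1)\sum_j q^j = q^{nd}-1$, so $E>0$ and $E\equiv 0$ hold automatically. The main technical point is this careful carry bookkeeping in the equality case.

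Finally we check the remaining admissibility condition $C(\mathbf{S},t)\equiv 0$. Write $\sum_\alpha\bigl(\sum_{i\in S_\alpha}q^i + \sum_{j\in U(S_\alpha)}q^j\bigr) = \sum_j f(j)q^j + E(\mathbf{S})$. Since $E(\mathbf{S})\equiv 0$, the condition becomes $\sum_j f(j)q^j \equiv -t\frac{q^{nd}-1}{q-1}$, that is, $f\in\mathcal{A}_t$. Grouping the tuples by coverage function, the number of tuples with a given $f$ is $\omega_k(f)$. This gives the stated formula. The parity hypothesis guarantees that $ndR(k)/2$ is an integer; if it were not, no balanced $f$ would exist.
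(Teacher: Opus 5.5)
Your proof is correct and follows the same route as the paper. You rewrite Proposition~\ref{prop:exact} in terms of coverage functions, use $\sum_j x_j \ge s_q(E) \ge nd(q-1)$ from Lemma~\ref{lem:digitsum} to force $x_j = q-1$ in the extremal case, and reduce the condition on $C$ to $f\in\mathcal{A}_t$. You also state explicitly the termwise bound $\#\mathbf{S}\le ndR(k)/2$ for admissible tuples, which the paper uses only implicitly; it is genuinely needed, since the bound on $\deg W_{k,t}$ alone would not rule out higher-degree summands contributing to the coefficient of $T^{ndR(k)/2}$.
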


\begin{proof}
    We first rewrite Prop.~\ref{prop:exact} in terms of coverage functions. For $f\: \Z/nd \to \Z$, write $x_j := k - f(j) - f(j+1)$. If $f = f_{\mathbf{S}}$, one checks that $x_j = \# \{ 1 \leq \alpha \leq k \ |\ j \in U(S_\alpha)\}$. Thus, we define
    \[
    E(f) := \sum_{j=0}^{nd-1} x_j q^j, \qquad C(f,t) := t \frac{q^{nd}-1}{q-1} + \sum_{j=0}^{nd-1} (f(j)+x_j)q^j,
    \]
    and call $f$ $t$-admissible if $E(f) > 0$ and $E(f) \equiv_{q^{nd}-1} C(f,t) \equiv_{q^{nd}-1} 0$. Note that $E(\mathbf{S}) = E(f_{\mathbf{S}})$ and $C(\mathbf{S},t) = C(f_{\mathbf{S}},t)$. Thus Prop.~\ref{prop:exact} becomes
    \[
    W_{k,t} = (-1)^{nd(k+t)} \sum_{f t\text{-admissible}} \omega_k(f) \prod_{i \in \Z/nd} (T - \theta^{q^i})^{f(i)}.
    \]
    So it suffices to show that the $t$-admissible $f$ with $\sum f(i) = ndR(k)/2$ are precisely the $R(k)$-balanced $f \in \mathcal A_t$.

    If $\sum f(i) = ndR(k)/2$, then $\sum x_j = ndk - 2\sum f(i) = nd(q-1)$. On the other hand,
    \[
    \sum x_j \geq s_q(E(f)) \geq nd(q-1),
    \]
    the second inequality being Lemma~\ref{lem:digitsum}. Thus the first inequality is an equality. But $s_q(\sum x_i q^i) = \sum x_i$ if and only if $x_i \leq q-1$ for all~$i$, and since the sum equals $nd(q-1)$ we must have $x_i = q-1$ for all~$i$. Hence $f(j) + f(j+1) = k - (q-1)$ for all~$j$, i.e., $f$ is $R(k)$-balanced. If $f$ is moreover $t$-admissible, then clearly $f \in \mathcal A_t$. 

    Conversely, if $f$ is $R(k)$-balanced then $x_j = q-1$ for all~$j$, so $E(f) > 0$ and $q^{nd}-1 \mid E(f)$. If moreover $f \in \mathcal A_t$ then also $q^{nd}-1 \mid C(f,t)$, so $f$ is $t$-admissible, as we wanted. 
\end{proof}

The following lemma about $r$-balanced maps explains where the parity of $nd$ comes into play.

\begin{lemma}\label{lem:balanced}
    Let $f\: \Z/m \to \Z_{\geq 0}$ be $r$-balanced for some $r \geq 0$. 
    \begin{enumerate}
        \item If $m$ is odd, then $r$ is even and $f(i) = r/2$ for all~$i$.
        \item If $m$ is even, then $f = f^{(c)}$ for some $0 \leq c \leq r$, where we define
        \[
        f^{(c)}(i) = \begin{cases}
            c & \text{if } i \text{ is even;} \\
            r-c & \text{if } i \text{ is odd.}
        \end{cases}
        \]
    \end{enumerate}
\end{lemma}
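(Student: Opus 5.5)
The plan is to exploit the recurrence $f(i+1) = r - f(i)$ that $r$-balancedness forces, and simply propagate it around the cycle $\Z/m$. Applying the relation twice gives $f(i+2) = r - f(i+1) = f(i)$ for every $i \in \Z/m$, so $f$ is invariant under translation by~$2$. Everything then turns on the subgroup generated by~$2$ in $\Z/m$.

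\emph{Case $m$ odd.} Here $2$ is a unit in $\Z/m$, so translation by~$2$ acts transitively and $f$ is constant, say $f \equiv c$. The balancing condition then reads $2c = r$. Hence $r$ is even and $f(i) = r/2$ for all~$i$. For $m = 1$ the condition $f(i) + f(i+1) = r$ already says $2f(0) = r$, so this case is covered by the same argument.

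\emph{Case $m$ even.} Now the parity of an element of $\Z/m$ is well defined. Translation by~$2$ makes $f$ constant on even residues, with value $c := f(0)$, and constant on odd residues. From $f(0) + f(1) = r$ the odd value is $r - c$, so $f = f^{(c)}$. Nonnegativity of $f$ gives $c \geq 0$ and $r - c \geq 0$, that is, $0 \leq c \leq r$.

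There is no real obstacle here. The only points needing care are that parity is meaningful in $\Z/m$ exactly when $m$ is even, and the degenerate case $m = 1$, which the argument above already handles.
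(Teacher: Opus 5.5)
Your proposal is correct and follows the same approach as the paper, whose proof is the one-line observation that an $r$-balanced map satisfies $f(i+2)=f(i)$. You carry out the resulting case split on the parity of $m$ explicitly, including the bound $0 \leq c \leq r$ coming from nonnegativity.
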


\begin{proof}
    This follows from the observation that an $r$-balanced map must be 2-periodic.
\end{proof}

\begin{lemma}\label{lem:B-is-indicator}
    Let $B\: \mathbb{Z}_{\geq 0} \to \bF_p$ be the function $B(m) := \binom{m + q - 1}{q-1}$. Then $B = \mathds{1}_{q \mid m}$ is the indicator function of~$q\mathbb{Z}_{\geq 0}$. 
\end{lemma}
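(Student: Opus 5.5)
We must show that $\binom{m+q-1}{q-1} \bmod p$ equals $1$ if $q \mid m$ and $0$ otherwise. The plan is to use Lucas' theorem in base $p$, or equivalently a Frobenius computation in $\bF_p[x]$.

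The cleanest route is via generating functions. In $\bF_p[[x]]$ we have
\[
\sum_{m\geq 0}\binom{m+q-1}{q-1}x^m=(1-x)^{-q}.
\]
Since $q$ is a power of $p$, Frobenius gives $(1-x)^q=1-x^q$ in characteristic $p$. Hence
\[
(1-x)^{-q}=(1-x^q)^{-1}=\sum_{c\geq 0}x^{qc}.
\]
Comparing coefficients of $x^m$ shows that $B(m)=1$ when $q\mid m$ and $B(m)=0$ otherwise, which is the claim.

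As an alternative check, Lucas' theorem gives the same answer. Write $q=p^e$. The base-$p$ digits of $q-1$ are $p-1$ in positions $0,\dots,e-1$. Then $\binom{m+q-1}{q-1}\not\equiv 0$ forces every low digit of $m+q-1$ to be $p-1$. That happens exactly when $m\equiv 0 \pmod q$, and then each digit factor is $\binom{p-1}{p-1}=1$.

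There is no real obstacle. The only point needing care is the identity $(1-x)^q=1-x^q$ over $\bF_p$, which is where $q$ being a power of $p$ enters; the remaining steps are routine.
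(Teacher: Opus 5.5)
Your proof is correct, but your main argument takes a different route from the paper's. The paper uses only Lucas's theorem. The lowest $e$ base-$p$ digits of $q-1$ are all $p-1$, so $B(m)\neq 0$ forces the lowest $e$ digits of $m+q-1$ to be $p-1$, i.e.\ $m+q-1\equiv q-1 \pmod q$. That is exactly your ``alternative check.''

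Your generating-function argument instead reduces the integer identity $\sum_{m\geq 0}\binom{m+q-1}{q-1}x^m=(1-x)^{-q}$ modulo $p$. It then uses $(1-x)^q=1-x^q$ in $\bF_p[x]$; this also holds for $p=2$, since then $-1=1$. This gives a self-contained proof with no digit bookkeeping. It also shows directly that the nonzero values are exactly $1$, not merely nonzero. The paper's one-line Lucas argument leaves that point implicit, though it is immediate because every digit factor is $\binom{p-1}{p-1}$ or $\binom{\cdot}{0}$.

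The Lucas route is shorter once the theorem is taken as given. In effect, your Frobenius computation is the standard proof of Lucas's theorem specialized to this case. In your Lucas check, you should also note that the digit factors in positions $\geq e$ are $\binom{n_i}{0}=1$, since $q-1$ has zero digits there.
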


\begin{proof}
    We use Lucas's Theorem. If $q = p^e$, the first $e$ base-$p$ digits of $q-1$ are all $p-1$, so $B(m) = 1$ if and only if the first $e$ base-$p$ digits of $m + q - 1$ are also~$p-1$, and $B(m) = 0$ otherwise. So $B(m) = 1 \iff m+q-1 \equiv_q q-1 \iff q \mid m$. 
\end{proof}

We are now ready to determine sharpness of the strong Ramanujan bound.

\begin{proof}[Proof of Theorem~\ref{thm:sharpness-criterion}]
We use the equation
    \begin{equation}\label{eq:trace-and-W}
        \Tr(\T_{\p}^n \, | \, \S_{k+2,l}) = \sum_{0 \leq j \leq k/2} (-1)^j \binom{k-j}{j} W_{k-2j,j+l-k-1} \wp^{nj},
    \end{equation}
    which is just the Hecke trace formula from the introduction: the term $j = k/2$, if it occurs, is zero by \cite[Lemma~3.4]{devries_traces}. This gives
    \[
    [T^{ndR(k)/2}]\Tr(\T_{\p}^n \, | \, \S_{k+2,l}) = \sum_{0 \leq j \leq k/2} (-1)^j \binom{k-j}{j} [T^{ndR(k-2j)/2}]W_{k-2j,j+l-k-1}.
    \]
    Now plug in the expression from Prop.~\ref{prop:W-top-coeff}. Note that the map $f \mapsto f + i$ defines a bijection
    \[
    \{ (r-2i)\text{-balanced maps } \Z/m \to \Z_{\geq 0} \} \longrightarrow \{ r\text{-balanced maps } \Z/m \to \Z_{\geq i} \}.
    \]
    Moreover, $f \in \mathcal A_t$ if and only if $f + i \in \mathcal A_{t-i}$. Thus we obtain
    \begin{align*}
    [T^{ndR(k)/2}]\Tr(\T_{\p}^n \, | \, \S_{k+2,l}) &= (-1)^{nd(l-1)} \sum_{R(k)\text{-balanced }f \in \mathcal A_{1-l}} \sum_{j \geq 0} (-1)^{j(nd+1)} \binom{k-j}{j} \omega_{k-2j}(f - j) \\
    &= (-1)^{nd(l-1)} \sum_{R(k)\text{-balanced }f \in \mathcal A_{1-l}} \prod_{j \in \Z/nd} \binom{k-f(j+1)}{f(j)},
    \end{align*}
    the last equality being Prop.~\ref{prop:walk-identity}. If $f$ is $R(k)$-balanced then $k - f(j+1) = f(j) + q - 1$, so the binomial coefficient equals $\binom{f(j) + q - 1}{q-1} = B(f(j))$.

    To conclude, first suppose $nd$ is odd. If $R(k)$ is odd, the bound $ndR(k)/2$ is not an integer and hence not attained; moreover, $2q \nmid R(k)$, so the theorem holds. Hence we may assume that $R(k)$ is even. The constant function $R(k)/2$ lies in $\mathcal{A}_{1-l}$ if and only if $R(k)/2 \equiv_{q-1} l-1$. This is equivalent to $\nu_{k,l}$ being odd,
    so by Lemma~\ref{lem:balanced} we obtain
    \[
    [T^{ndR(k)/2}]\Tr(\T_{\p}^n \, | \, \S_{k+2,l}) = \begin{cases}
        (-1)^{l-1} B(R(k)/2) & \text{if } \nu_{k,l} \equiv_2 1; \\
        0 & \text{otherwise.}
    \end{cases}
    \]
    By Lemma~\ref{lem:B-is-indicator}, we get the claimed criterion.
    
    Next, suppose $nd$ is even and let $0 \leq c \leq R(k)$. Then $f^{(c)}$ lies in $\mathcal A_{1-l}$ if and only if 
    \[
    qR(k) - (q-1)c \equiv_{q^2-1} (q+1)(l-1),
    \]
    which is equivalent to $c \equiv_{q+1} l - \nu_{k,l}$. 
    So in this case, again by Lemma~\ref{lem:balanced},
    \[
    [T^{ndR(k)/2}]\Tr(\T_{\p}^n \, | \, \S_{k+2,l}) = \sum_{c \in C} (B(c)B(R(k)-c))^{nd/2},
    \]
    where $C = \{ 0 \leq c \leq R(k) \ | \ c \equiv_{q+1} l - \nu_{k,l} \}$. By Lemma~\ref{lem:B-is-indicator}, $B(c) = B(R(k)-c) = 1$ if and only if $q$ divides both $c$ and~$R(k)$. Therefore the top coefficient equals $\# \{ 0 \leq c' \leq R(k)/q \ | \ qc' \equiv_{q+1} l - \nu_{k,l}\}$, that is, $N_{k,l}$.
\end{proof}

\begin{proof}[Proof of Prop.~\ref{prop:W-sharpness}]
    We use the expression of Prop.~\ref{prop:W-top-coeff}. Since $nd$ is odd, the bound can only be attained if $R(k)$ is even. Write $k = q-1+2s$ with $s \geq 0$. We observed that the only $R(k)$-balanced map is then the constant function $s$, and this lies in $\mathcal{A}_t$ if and only if $s+t \equiv_{q-1} 0$. In this case, the top coefficient of $W_{k,t}$ is $\omega_k(s)$.
    
    If $s=0$, clearly $\omega_k(s) = 1$, so $W_{q-1,0} = 1$ attains the bound. It remains to show that $\omega_k(s)$ is divisible by~$p$ for all~$s \geq 1$. Prop.~\ref{prop:walk-identity} gives
    \begin{align*}
    \omega_k(s)  &= \binom{k-s}{s}^{nd} - \sum_{j=1}^s \binom{k-j}{j} \omega_{k-2j}(s-j) \\
    &= B(s)^{nd} - B(s) - \sum_{j=1}^{s-1} \binom{k-j}{j} \omega_{k-2j}(s-j).
    \end{align*}
    Since $B(s)$ is either 0 or 1, this always vanishes mod~$p$ by induction on~$s$.
\end{proof}

We conclude this section with a proof of the dual bound. This will use the following generalization of Lemma~\ref{lem:digitsum}.

\begin{lemma}\label{lem:digitsum_v2}
    Let $s \in \{1,\ldots,q-1\}$. If $E > 0$ and $E \equiv s \frac{q^m-1}{q-1} \pmod{q^m-1}$, then $s_q(E) \geq ms$. In particular, if $E = \sum_{i \geq 0} b_i q^i$ with $b_i \geq 0$, let $c_j := \displaystyle{\sum_{i \equiv_m \,j}} b_i$. Then $\sum b_i \geq ms$, and equality holds if and only if $c_j = s$ for all~$j \in \Z/m$.
\end{lemma}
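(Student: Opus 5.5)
We reduce modulo $q^m-1$ by folding digits, as in the proof of Lemma~\ref{lem:digitsum}. Let $E=\sum_i E_i q^i$ be the base-$q$ expansion and set $R := s\frac{q^m-1}{q-1} = \sum_{j=0}^{m-1} s q^j$. Since $1 \leq s \leq q-1$, the digits of $R$ are all $s$, so $R$ has exactly $m$ digits and $s_q(R) = ms$, with $0 < R < q^m-1$ unless $s = q-1$, in which case $R = q^m-1$. We induct on $E$.

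First suppose $E<q^m$. Since $E>0$ and $E \equiv R \pmod{q^m-1}$, either $E=R$, or $s=q-1$ and $E = q^m-1$ (the residue $0$ is attained by $q^m-1$ among positive numbers below $q^m$). In both cases $s_q(E)=ms$.

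Now suppose $E\ge q^m$. Write $E=q^mF+G$ with $F\ge1$ and $0\le G<q^m$. Then $E':=F+G \equiv E \pmod{q^m-1}$, $0<E'<E$, and
\[
s_q(E)=s_q(F)+s_q(G)\ge s_q(E').
\]
Induction then gives $s_q(E)\ge ms$.

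For the second statement, with $E=\sum b_iq^i$ and $b_i\ge0$ arbitrary (not necessarily digits), set $E'' := \sum_{j\in\Z/m} c_j q^j$ using representatives $0\le j<m$. Since $q^i\equiv q^{i \bmod m} \pmod{q^m-1}$, we have $E''\equiv E\equiv R \pmod{q^m-1}$. We also have $E''>0$ because $E>0$ forces some $b_i>0$.

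Next we need $\sum_i b_i\ge s_q(E)$, which holds since carrying only lowers the digit sum: adding $b_iq^i$ termwise gives $s_q(E)\le\sum_i s_q(b_iq^i) = \sum_i s_q(b_i) \le \sum_i b_i$. Combined with the first part, this gives $\sum_i b_i \ge s_q(E) \ge ms$.

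The equality case is the main step. If $c_j=s$ for all $j$, then $\sum_i b_i = \sum_j c_j = ms$ immediately. Conversely, suppose $\sum_j c_j=\sum_i b_i=ms$, and apply the inequality just proved to $E''$: we get $ms = \sum_j c_j \ge s_q(E'') \ge ms$, so $s_q(E'') = \sum_j c_j$. Equality in $s_q(\sum_j c_jq^j) \le \sum_j c_j$ forces every $c_j\le q-1$, because any $c_j \geq q$ would produce a carry that strictly lowers the digit sum. Hence the $c_j$ are exactly the base-$q$ digits of $E''$, and $0<E''\le q^m-1$.

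Now $E'' \equiv R \pmod{q^m-1}$ with $0 < E'' \leq q^m-1$ means $E''$ is the unique representative of that residue class in $(0,q^m-1]$. Since $0 < R \leq q^m-1$, we get $E''=R$, i.e. $\sum_j c_jq^j=\sum_j sq^j$ with all digits in $[0,q-1]$. Uniqueness of base-$q$ expansion then gives $c_j=s$ for all $j$.
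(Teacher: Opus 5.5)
Your proof is correct and follows essentially the same route as the paper: the block-folding induction from Lemma~\ref{lem:digitsum} gives the inequality, and $\sum b_i \geq s_q(E)$ carries it over to arbitrary coefficient expansions. For the equality case you fold the coefficients directly into $E'' = \sum_j c_j q^j$ and apply the bound to $E''$; this is a clean, fully rigorous version of the paper's sketched ``no carries occur'' argument.
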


\begin{proof}
    The proof is similar to that of Lemma~\ref{lem:digitsum}, so we only give a sketch. Clearly $\sum b_i \geq s_q(E)$, with equality if and only if $b_i \leq q-1$ for all~$i$. Replacing $E$ by the sum of its base-$q$ blocks of length~$m$ can only decrease the digit sums and eventually ends up in $(0,q^m-1]$. In that case $E = s\frac{q^m-1}{q-1}$ with digit sum $ms$. Equality holds if and only if no carries occurred in this procedure, which is equivalent to $c_j = s$ for all~$j$.
\end{proof}

\begin{proposition}\label{prop:dual-bound-W}
    Suppose $\deg \p = 1$. Fix $t \in \Z$ and $k \geq 0$. Let $s \in \{0,\ldots,q-2\}$ denote the representative of $-t \pmod{q-1}$. Then $v_{\p}(W_{k,t}) \geq ns$, and equality holds if and only if $k > 2s$, $k \equiv_{q-1} 2s$, and $p \nmid \omega_k(f)$ for the constant function $f = s\: \Z/n \to \Z$.
\end{proposition}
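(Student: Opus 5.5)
Since $d=1$, Remark~\ref{rmk:W-for-deg1} gives
\[
W_{k,t} = (-1)^{n(k+t)} \sum_{\mathbf{S}} \wp^{\#\mathbf{S}},
\]
with $\mathbf{S}$ running over $t$-admissible $k$-tuples of disconnected subsets of $\Z/n$. Grouping by coverage functions as in the proof of Prop.~\ref{prop:W-top-coeff}, this becomes
\[
W_{k,t} = (-1)^{n(k+t)}\sum_{f\ t\text{-admissible}} \omega_k(f)\,\wp^{\sum_i f(i)},
\]
where $x_j = k-f(j)-f(j+1)$ and $f$ is $t$-admissible when $E(f)>0$ and $E(f)\equiv C(f,t)\equiv 0 \pmod{q^n-1}$. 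The plan is therefore to show that every $t$-admissible $f$ has $\sum_i f(i)\geq ns$, and that equality forces $f$ to be the constant function $s$.

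The key step is a digit-sum estimate. Subtracting the two congruences, admissibility gives
\[
\sum_j f(j)q^j \equiv -t\,\frac{q^n-1}{q-1} \equiv s\,\frac{q^n-1}{q-1} \pmod{q^n-1}.
\]
If $s=0$ the bound $v_\p(W_{k,t})\ge 0$ is trivial. Otherwise $1\le s\le q-2$, and we apply Lemma~\ref{lem:digitsum_v2} with $m=n$, $b_i=f(i)$ and $E=\sum_j f(j)q^j$. Here $E>0$ automatically, since $E\equiv s\frac{q^n-1}{q-1}\not\equiv 0$. Because the indices already range over $\Z/n$, we have $c_j=f(j)$, so the lemma gives $\sum_i f(i)\ge ns$, with equality if and only if $f\equiv s$. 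This proves the bound.

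For the equality criterion, the coefficient of $\wp^{ns}$ in $W_{k,t}$ is $\pm\omega_k(s)$ when the constant function $s$ is $t$-admissible, and $0$ otherwise. It remains to decide when the constant $s$ is $t$-admissible.
\begin{itemize}
\item For constant $f=s$ we have $x_j = k-2s$ for all $j$, so $E = (k-2s)\frac{q^n-1}{q-1}$.
\item $E>0$ requires $k>2s$. Then $q^n-1\mid E$ holds if and only if $q-1\mid k-2s$.
\item The condition $C\equiv 0$ is then equivalent to $\sum_j f(j) q^j \equiv -t\frac{q^n-1}{q-1}$, which holds since $s\equiv -t \pmod{q-1}$.
\end{itemize}
This recovers exactly the conditions $k>2s$, $k\equiv_{q-1}2s$, and $p\nmid\omega_k(s)$, since the coefficient lies in $\bF_p$.

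The main obstacle is the case $s=0$. There the bound is trivial, but the equality criterion still needs care: the top-order contributions have $\sum_i f(i)=0$, i.e.\ $f=0$, and $f=0$ is admissible if and only if $k>0$ and $q-1\mid k$. Moreover $\omega_k(0)=1$, which matches the stated criterion. I would also double-check that a nonconstant $f$ with $\sum_i f(i)=ns$ cannot sneak in when some $f(i)$ exceed $q-1$. This is handled by the equality clause of Lemma~\ref{lem:digitsum_v2}, which covers arbitrary $b_i\ge0$.
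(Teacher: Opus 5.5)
Your proposal is correct and is essentially the paper's proof. Like the paper, you reduce to $t$-admissible tuples via Remark~\ref{rmk:W-for-deg1}, apply Lemma~\ref{lem:digitsum_v2} to the coverage function to get $\sum_i f(i)\geq ns$ with equality only for the constant function $s$, and then determine when that constant function is $t$-admissible. You also spell out two points the paper leaves to the reader: that $E>0$ is automatic for $1\le s\le q-2$, and the admissibility computation for $f\equiv s$. That computation also makes the paper's preliminary reduction to $k\equiv_{q-1}2s$ unnecessary.
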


\begin{proof}
    Since $W_{k,t} = 0$ unless $k+2t \equiv_{q-1} 0$, we may assume $k \equiv_{q-1} 2s$. First assume $s > 0$.
    
    By Remark~\ref{rmk:W-for-deg1}, it suffices to show that if $\mathbf{S}$ is a $t$-admissible tuple of disconnected subsets of $\mathbb Z/n$, then $\# \mathbf{S} \geq ns$. Let $\mathbf{S}$ be such a tuple. By $t$-admissibility, we have
    \[
    \sum_{\alpha = 1}^k \sum_{i \in S_\alpha} q^i \equiv -t \frac{q^n-1}{q-1} \pmod{q^n-1} \equiv s \frac{q^n-1}{q-1} \pmod{q^n-1}.
    \]
    Thus, writing $B := \sum_{\alpha = 1}^k \sum_{i \in S_{\alpha}} q^i = \sum_{i = 0}^{n-1} f_{\mathbf{S}}(i)q^i$, Lemma~\ref{lem:digitsum_v2} gives $s_q(B) \geq ns$. But then also $\# \mathbf{S}= \sum_{i=0}^{n-1} f_{\mathbf{S}}(i) \geq s_q(B) \geq ns$, as desired. Lemma~\ref{lem:digitsum_v2} also says that we have equality if and only if $f_{\mathbf{S}}(i) = s$ for all~$i$, i.e., $f_{\mathbf{S}}$ is the constant function~$s$, and one checks that this function is $t$-admissible whenever $k > 2s$ and $k \equiv_{q-1} 2s$.
    In this case,
    \[
    [\wp^{ns}] W_{k,t} = (-1)^{n(k+t)}\omega_k(s).
    \]
    Finally, if $s=0$ the inequality is trivial, and the equality is easily seen to hold for each positive multiple~$k$ of~$q-1$ by Remark~\ref{rmk:W-for-deg1}. This agrees with the lemma because $\omega_k(0)=1$.
\end{proof}

\begin{proof}[Proof of Theorem~\ref{thm:dual-bound}]
    We now deduce the statement about the traces from Equation~\eqref{eq:trace-and-W}.
    We may assume $k+2 \equiv_{q-1} 2l$. Denoting by $[c]_{q-1}$ the representative in $\{0,\ldots,q-2\}$ of~$c \in \mathbb Z$, the $j$-term has valuation at least $nj + n[l-1-j]_{q-1} \geq n[l-1]_{q-1} = n(l-1)$; here the inequality is an equality for $0 \leq j \leq l-1$. This proves the claimed inequality. 
    The question of equality remains. Since we assume $k > 2(l-1)$, the above yields
    \[
    [\wp^{n(l-1)}] \Tr(\T_{\p}^n \, | \, \S_{k+2,l}) = (-1)^{n(l-1)}\sum_{0 \leq j \leq l-1} (-1)^{(n+1)j} \binom{k-j}{j} \omega_{k-2j}(l-1-j).
    \]
    By Prop.~\ref{prop:walk-identity}, this simplifies to
    \[
    (-1)^{n(l-1)} \binom{k-(l-1)}{l-1}^n.
    \]
    By Lucas's Theorem, this vanishes if and only if $\binom{a_0}{l-1} \equiv_p 0$, where $a_0$ denotes the $q^0$-digit of $k-(l-1)$; in particular, it depends only on $k \pmod{q}$.
\end{proof}

\begin{remark}\label{rmk:kneser}
    The notions introduced in this section have graph-theoretic interpretations. For any $k \geq 1$, denote by $D_k$ the \emph{disjointness graph} on the set $\{1,\ldots,k\}$: this is the graph with vertices given by subsets of $\{1,\ldots,k\}$ and edges $(V_i,V_j)$ if and only if $V_i \cap V_j = \emptyset$. In particular, the only vertex with a loop is the empty set.

    A \emph{closed walk of length~$m$} in a graph $G$ is a map $V\: \Z/m \to G$ such that $(V(i),V(i+1))$ is an edge for all~$i$. Now observe that a closed walk of length~$m$ in $D_1$ is the same as (the coverage function of) a disconnected subset $S \subset \Z/m$: the function $V\: \Z/m \to \{ \emptyset, \{1\}\} \xrightarrow{\sim} \{0,1\}$ satisfies the characterizing property $V(i) = 1 \implies V(i+1) = 0$.

    More generally, $V\: \Z/m \to D_k$ is a closed walk if and only if $V(i) \cap V(i+1) = \emptyset$ for all~$i$, so the $k$-tuple $\mathbf{S} = (S_\alpha)$ with $S_{\alpha} = \{ i \in \Z/m \ | \ \alpha \in V(i)\}$ consists of disconnected subsets. Thus, for arbitrary $f\: \Z/m \to \Z$, we have the interpretation $\omega_k(f) = \# \Omega_k(f)$, where
    \[
    \Omega_k(f) := \{ \text{closed walks }V\text{ of length }m\text{ in }D_k\text{ such that }\#V(i) = f(i) \text{ for all }i\}. 
    \]
    If $f = s$ is constant, then this counts closed walks in the Kneser graph $K(k,s)$, defined as the induced subgraph of $D_k$ with vertices the $s$-element subsets. In particular, if $m$ is odd and $f$ is $r$-balanced, $\omega_k(f)$ counts closed walks in $K(k,r/2)$. In this case statements such as Prop.~\ref{prop:W-sharpness} and Prop.~\ref{prop:walk-identity} acquire graph-theoretic content: the former states that the number of closed walks of any odd length~$m$ in $K(2s+q-1,s)$ is divisible by~$p$ for every $s \geq 1$. This can also be proved using the eigenvalues of the adjacency matrices of Kneser graphs \cite[Thm.~9.4.3]{alg-graph-thy}.
    
    It would be interesting to explore to what extent this connection can be used in the study of Drinfeld modular forms and/or graph theory.
\end{remark}


\section{The mod $p$ distribution of Frobenius polynomials}\label{sec:distributions}

In \cite{gek_frobdist}, some results were obtained about the distribution of Frobenius polynomials, that is, the distribution of the integers $\# \Iso_{\p^n}(a,b)$ as $a$~and $b$ vary. In particular, in Section~4 of \emph{loc.~cit.}, Gekeler studied how often $\deg a$ is maximal by considering the sets
    \begin{align*}
    H_+(\p) &= \{ [\phi] / \bF_{\p} \ | \ a_{N,\phi} \neq 0\},\\
    H_-(\p) &= \{ [\phi] / \bF_{\p} \ | \ a_{N,\phi} = 0\}.
    \end{align*}
    He then found explicit expressions for the cardinalities of these sets, depending on the parity of~$d$. In particular, he proved that
    \begin{equation}\label{eq:gekeler_counts}
    \# H_-(\p) = \begin{cases}
        (q-1)q^{d-1} & \text{if } d \text{ is odd};\\
        (q-1) \left( \frac{q^d-1}{q+1} + 1\right) & \text{if } d \text{ is even}.
        \end{cases}
    \end{equation}

The methods employed in this paper are closely related to Frobenius distributions; for instance, Theorem~\ref{thm:SRB_upgrade} is a statement about moment vanishing. On the other hand, we never work with the integers $\# \Iso_{\p^n}(a,b)$ in full, but only their mod~$p$ reductions.

In this section we prove some results about the distribution of $\# \Iso_{\p^n}(a,b) \pmod{p}$, and, where possible, also modulo powers of~$p$. One of the main results, Theorem~\ref{thm:top-invisibility}, can be seen as a variant of \eqref{eq:gekeler_counts} that gives a lower bound on the $q$-adic valuation of $\# H_{-}(\p^n)$ for any $n \geq 1$, while at the same time allowing for more coefficients of the Frobenius polynomial than just $a_N$ to be fixed.

In the following, we will identify a polynomial $a = \sum a_iT^i$ of degree $\leq N$ with the vector $(a_0,a_1,\ldots,a_N) \in \bF_{q}^{N+1}$. We will also use the notation introduced in Section~\ref{sec:notation}.

\begin{lemma}\label{lem:determine}
    Let $f\: \bF_{q}^{N+1} \times \bF_q^\times \to \bF_q$ be a function. Then $f$ is determined by the values $\sum_{a,b}f(a,b)a_0^{v_0}a_1^{v_1}\cdots a_N^{v_N}b^t$ with $0 \leq v_i \leq q-1$ and $0 \leq t \leq q-2$. 
\end{lemma}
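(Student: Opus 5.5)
The plan is to show that the $\bF_q$-linear map sending $f$ to the vector of its moments is injective. Since the domain and the index set have the same size, this makes it an isomorphism. Concretely, the space $V$ of all functions $\bF_q^{N+1}\times\bF_q^\times\to\bF_q$ has dimension $q^{N+1}(q-1)$. The moments are indexed by $(\mathbf v,t)$ with $0\le v_i\le q-1$ and $0\le t\le q-2$, and there are also exactly $q^{N+1}(q-1)$ of them. So it suffices to show that if all these moments vanish, then $f=0$. Equivalently, it suffices to show that the monomial functions $(a,b)\mapsto a_0^{v_0}\cdots a_N^{v_N}b^t$ span the dual of $V$ under the pairing $\langle f,g\rangle=\sum_{a,b} f(a,b)g(a,b)$.

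The key step is to produce, for each point $(\alpha,\beta)$, an explicit combination of monomials in this range that equals the indicator function $\delta_{(\alpha,\beta)}$. I would write
\[
\delta_{(\alpha,\beta)}(a,b)=\prod_{i=0}^N\bigl(1-(a_i-\alpha_i)^{q-1}\bigr)\cdot \delta_\beta(b),
\]
and for the second factor use $\delta_\beta(b)=-\sum_{t=0}^{q-2}\beta^{-t}b^t$ on $\bF_q^\times$; this identity follows from Lemma~\ref{lem:orth} applied to $\bF_q^\times$ with $m=1$. Each factor $1-(a_i-\alpha_i)^{q-1}$ expands, by the binomial theorem, into a polynomial in $a_i$ of degree at most $q-1$. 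Expanding the whole product therefore gives a polynomial whose monomials all have exponents $0\le v_i\le q-1$ and $0\le t\le q-2$.

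Pairing $f$ with this polynomial gives
\[
f(\alpha,\beta)=\sum_{a,b}f(a,b)\,\delta_{(\alpha,\beta)}(a,b),
\]
and the right-hand side is an $\bF_q$-linear combination of the prescribed moments. Hence $f$ is determined by these moments, and is recovered from them by an explicit formula.

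The argument has no real obstacle. The only points needing care are the convention $0^0=1$ when $v_i=0$ or $t=0$, and checking that $b^t$ for $t$ in the range $0\le t\le q-2$ suffices because $b$ ranges over $\bF_q^\times$, where $b^{q-1}=1$. This is also the sense in which the argument is "an easier version" of what is needed for Corollary~\ref{cor:fixed-b}: there one only uses the factor $\delta_\beta(b)$ to isolate a single value of $b$.
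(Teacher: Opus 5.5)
Your proof is correct and is essentially the paper's argument. The paper notes that the monomials $a_0^{v_0}\cdots a_N^{v_N}b^t$ with $0\le v_i\le q-1$, $0\le t\le q-2$ form a basis of the function space, and that the pairing $(f,g)\mapsto\sum_{a,b}f(a,b)g(a,b)$ is non-degenerate when tested on indicator functions; you make the same reasoning explicit by writing each indicator as $\prod_i\bigl(1-(a_i-\alpha_i)^{q-1}\bigr)\cdot\bigl(-\sum_t\beta^{-t}b^t\bigr)$, which additionally gives an explicit inversion formula for $f$.
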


\begin{proof}
    Each function $f\: \bF_q \to \bF_q$ is represented by a polynomial $f(\alpha)$, so $\{\text{functions }\bF_q \to \bF_q\} \cong \bF_q[\alpha]/(\alpha^q-\alpha)$, which has $\bF_q$-basis $\{1,\alpha,\ldots,\alpha^{q-1}\}$. Similarly, $\{\text{functions }\bF_q^\times \to \bF_q\}$ has basis $1,b,\ldots,b^{q-2}$. Hence functions on the product $\bF_{q}^{N+1} \times \bF_q^\times$ have a corresponding product basis $\{a_0^{v_0}\cdots a_N^{v_N}b^t \ | \ 0 \leq v_i \leq q-1, \, 0 \leq t \leq q-2 \}$.

    Next, the bilinear pairing $(f,g) \mapsto \sum_{a,b}f(a,b)g(a,b) \in \bF_q$ is non-degenerate, as can be seen by pairing against indicator functions. Therefore a function is zero if it pairs to zero with all elements in the given basis, which yields our claim.
\end{proof}

The Hecke action on the spaces $\S_{k,l}$ is completely understood for $k \leq q+1$, and can be used to prove the following lemma. 
 
\begin{lemma}[{\cite[Rmk.~4.8]{devries_traces}}]\label{lem:low_moments}
    If $\mathbf{v} \in \Z_{\geq 0}^{N+1}$ is such that $||\mathbf{v}|| = 0$, then for all $t \in \Z$,
    \[
    \mu(\mathbf{v},t) = \begin{cases}
        1 & \text{if } 0 < v_0 \equiv t \equiv 0 \pmod{q-1};\\
        0 & \text{otherwise.}
    \end{cases}
    \]
\end{lemma}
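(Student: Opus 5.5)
Although the statement is attributed to \cite[Rmk.~4.8]{devries_traces}, I would prove it directly from the machinery of Section~\ref{sec:srb-proof}. The condition $||\mathbf{v}|| = 0$ means $v_i = 0$ for $i \geq 1$, so $\mu(\mathbf{v},t) = \sum_{a,b}\#\Iso_{\p^n}(a,b)\,a_0^{v_0}b^t$. By Corollary~\ref{cor:mass} and Lemma~\ref{lem:motive-basis}.5 this equals
\[
\sum_{(g,\Delta)\in \k\times\k^\times} (-1)^{ndt}\,\Delta^{-t\frac{q^{nd}-1}{q-1}}\, a_0(g,\Delta)^{v_0},
\]
where $\k = \bF_{\p^n}$. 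The plan is to substitute \eqref{eq:coeff} with $h=0$, expand the $v_0$-th power over $v_0$-tuples $\mathbf{S}=(S_1,\dots,S_{v_0})$ of disconnected subsets of $\Z/nd$, and then sum over $g$ and $\Delta$ using Lemma~\ref{lem:orth}, exactly as in Proposition~\ref{prop:exact}. The only difference from that proposition is that we take the constant term in $T$ of each product $\prod_{i\in S_\alpha}(T-\theta^{q^i})$.

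The key step is a digit-sum argument. The $g$-exponent of the term indexed by $\mathbf{S}$ is $E(\mathbf{S}) = \sum_j x_j q^j$, where $x_j = v_0 - f(j) - f(j+1)$ and $f = f_{\mathbf{S}}$. The $g$-sum is nonzero only if $E(\mathbf{S})>0$ and $q^{nd}-1 \mid E(\mathbf{S})$. In that case Lemma~\ref{lem:digitsum} gives $\sum_j x_j \geq s_q(E(\mathbf{S})) \geq nd(q-1)$.

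The surviving terms are to be organized by the coverage function $f$. I would show that only $f \equiv 0$ contributes mod $p$, i.e.\ that only $\mathbf{S} = (\emptyset,\dots,\emptyset)$ does. For that tuple, $E = v_0\frac{q^{nd}-1}{q-1}$, which is positive and divisible by $q^{nd}-1$ exactly when $v_0>0$ and $(q-1)\mid v_0$. The $\Delta$-exponent is $-(t+v_0)\frac{q^{nd}-1}{q-1}$, up to sign conventions. By Lemma~\ref{lem:orth} it survives iff $(q-1)\mid t+v_0$, i.e.\ iff $t \equiv 0 \pmod{q-1}$. The resulting value is $(-1)^{nd(v_0+t)}(-1)^{nd\,v_0}\cdot(-1)(-1) = 1$, up to checking the sign bookkeeping, which matches the claim.

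The main obstacle is killing the contributions from nonempty $\mathbf{S}$. I would regroup these by $f$. The constant term $\prod_{\alpha}\prod_{i\in S_\alpha}(-\theta^{q^i})$ depends only on $f$, not on the individual $S_\alpha$. It therefore suffices to show that, for fixed admissible $f\neq 0$, the coefficient $\omega_{v_0}(f)$ combined with the surviving sums vanishes mod $p$.

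I would first try to avoid this combinatorics altogether with a cleaner route: the constant term $a_0(g,\Delta)$ equals $a(g,\Delta)$ evaluated at $T=0$. When $\theta\neq 0$ that is the trace of $\Phi(0)$, and a direct Frobenius interpretation might let me identify the sum with a point count. If that fails, I would compare with the known formula for $W_{k,t}$ at a degree-one prime (the closed form \cite[Thm.~3.13]{devries_traces}, recovered in the remark after Remark~\ref{rmk:W-for-deg1}). The alternative justification is the Hecke-theoretic one the paper alludes to. For $k \leq q+1$ the trace on $\S_{k,l}$ is known, and for $v_0 \leq q-1$ these traces determine the $\mu((v_0,0,\dots),t)$ via \eqref{eq:W-mu} and the trace formula. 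For general $v_0$, I would reduce $a_0^{v_0}$ modulo $a_0^q = a_0$ to $v_0 \leq q-1$, keeping track of the $v_0>0$ condition. This reduction is the step I expect to be hardest to make rigorous.
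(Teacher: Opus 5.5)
Your setup and your evaluation of the all-empty tuple are correct, but the argument stops at the one step with real content. You never show that tuples $\mathbf{S}$ with $f_{\mathbf{S}}\neq 0$ contribute nothing. The three fallbacks (point count, degree-one closed form, Hecke traces) are named rather than carried out, so as written this is a plan, not a proof. The route you sketch for that step is also the hard way: regrouping by $f$ and proving vanishing mod $p$ for arbitrary $v_0$. For $v_0\geq q$, nonzero coverage functions can be $t$-admissible. For example, take $q=3$, $nd=2$, $v_0=4$, $t=1$, $f\equiv 1$. Then $E(f)=8=q^2-1$ and $C(f,t)=16\equiv 0 \pmod 8$. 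That term dies only because $\omega_4(f)=12\equiv 0\pmod 3$, so you would be pulled into the combinatorics of $\omega_{v_0}$.

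The missing observation is that the reduction you flagged as hardest is trivial and should come first. Since $a_{0,\phi}\in\bF_q$, for $v_0\geq 1$ you have $a_{0,\phi}^{v_0}=a_{0,\phi}^{v_0'}$, where $v_0'\in\{1,\dots,q-1\}$ and $v_0'\equiv v_0\pmod{q-1}$; both sides are $0$ when $a_{0,\phi}=0$. Moreover $(q-1)\mid v_0$ if and only if $v_0'=q-1$. For $v_0=0$ the sum is $q^{nd}\sum_{\Delta}b(\Delta)^t=0$. For $1\leq v_0\leq q-1$, your digit-sum argument closes once you add the upper bound you did not use: $\sum_j x_j=nd\,v_0-2\sum_j f_{\mathbf{S}}(j)\leq nd(q-1)$, with equality if and only if $v_0=q-1$ and $\mathbf{S}=(\emptyset,\dots,\emptyset)$. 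Every term surviving the $g$-sum also satisfies $\sum_j x_j\geq s_q(E(\mathbf{S}))\geq nd(q-1)$ by Lemmas~\ref{lem:digitsum} and~\ref{lem:orth}. So only the all-empty tuple survives, and only for $v_0=q-1$; for $v_0<q-1$ this is just Theorem~\ref{thm:SRB_upgrade} with $\|\mathbf{v}\|=0$. That surviving term equals $(-1)^{nd(q-1+t)}=1$ when $(q-1)\mid t$, since the exponent is even for $q$ odd and the sign is irrelevant for $q$ even.

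Repaired this way, your argument is a self-contained proof using only the finite-field sums of Section~\ref{sec:srb-proof}. The paper gives no proof of its own here. It cites \cite[Rmk.~4.8]{devries_traces}, where the lemma is deduced from the known Hecke action on $\S_{k,l}$ for $k\leq q+1$, which is essentially your third fallback.
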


We can now prove our first main result on Frobenius distributions mod~$p$. 

\begin{theorem}\label{thm:bottom-uniformity}
    For all $(\alpha,\beta) \in \bF_q \times \bF_q^\times$, we have
    \[
    \# \{ [\phi] / \bF_{\p^n} \ | \ a_{0,\phi} = \alpha, \ b_\phi = \beta\} \equiv 1 \pmod{p}.
    \]
\end{theorem}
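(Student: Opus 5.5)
The plan is to compute the counting function modulo $p$ through its moments, using Lemma~\ref{lem:determine} and Lemma~\ref{lem:low_moments}. Define $F\colon \bF_q \times \bF_q^\times \to \bF_q$ by letting $F(\alpha,\beta)$ be the reduction mod~$p$ of $\#\{[\phi] / \bF_{\p^n} \mid a_{0,\phi} = \alpha,\ b_\phi = \beta\}$. We will show that $F \equiv 1$. This is exactly the statement, since these counts are integers and $F$ takes values in $\bF_p \subset \bF_q$.

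First we compute the moments of $F$. For $0 \leq v_0 \leq q-1$ and $0 \leq t \leq q-2$, set $\mathbf{v} = (v_0,0,\ldots,0)$. Then $||\mathbf{v}|| = 0$, and by definition
\[
\sum_{\alpha,\beta} F(\alpha,\beta)\alpha^{v_0}\beta^t = \sum_{[\phi]} a_{0,\phi}^{v_0} b_\phi^t = \mu(\mathbf{v},t).
\]
Lemma~\ref{lem:low_moments} says this equals $1$ exactly when $v_0 = q-1$ and $t = 0$, and equals $0$ otherwise. In the range considered, the condition $0 < v_0 \equiv 0 \pmod{q-1}$ means $v_0 = q-1$, and $t \equiv 0 \pmod{q-1}$ means $t = 0$.

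Next we compute the same moments for the constant function $1$. By Lemma~\ref{lem:orth} with $m=1$,
\[
\sum_{\alpha\in\bF_q}\alpha^{v_0}\sum_{\beta\in\bF_q^\times}\beta^t = \Bigl(\sum_{\alpha}\alpha^{v_0}\Bigr)\Bigl(\sum_\beta \beta^t\Bigr).
\]
The first factor is $-1$ if $v_0 = q-1$ and $0$ otherwise; note that $v_0=0$ gives $q=0$. The second factor is $-1$ if $t=0$ and $0$ otherwise. The product is therefore $1$ exactly when $(v_0,t) = (q-1,0)$ and $0$ otherwise, so the moments of $1$ agree with those of $F$.

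Finally, we apply Lemma~\ref{lem:determine} in the special case $N = 0$, that is, to functions on $\bF_q \times \bF_q^\times$; its proof works verbatim for any number of $a$-coordinates. Since $F - 1$ pairs to zero against the whole basis $\alpha^{v_0}\beta^t$, we get $F = 1$. If one insists on using Lemma~\ref{lem:determine} with $N+1$ coordinates, we can instead extend $F$ to the function $\tilde F(a,b) := F(a_0,b)\,\mathds{1}_{a_1=\cdots=a_N=0}$. Its moments with some $v_i>0$ for $i \geq 1$ vanish, and the remaining moments reduce to the computation above.

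The only delicate step is matching the edge cases of Lemma~\ref{lem:low_moments}: $v_0 = 0$ versus $v_0 = q-1$, and the convention $0^0 = 1$. These must line up exactly with the orthogonality relations, so that the moments agree for every basis element and not just generically.
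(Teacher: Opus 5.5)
Your proposal is correct and is essentially the paper's own proof. Both compute the moments $\mu((v_0,0,\ldots,0),t)$ via Lemma~\ref{lem:low_moments}, match them against the moments of the constant function $1$ by orthogonality, and conclude with Lemma~\ref{lem:determine}; the paper also applies that lemma tacitly with a single $a$-coordinate, exactly as you point out.
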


\begin{proof}
    Let $f(\alpha,\beta) := \sum_{a_0 = \alpha} \# \Iso_{\p^n}(a,\beta)$. We aim to show that $f(\alpha,\beta) \equiv_p 1$ for all $\alpha,\beta$. For $0 \leq k \leq q-1$ and $0 \leq t \leq q-2$, we have
    \[
    \sum_{\alpha,\beta} f(\alpha,\beta)\alpha^k\beta^t
    = \sum_{a,b} \# \Iso_{\p^n}(a,b)a_0^kb^t = \mu((k,0,\ldots,0),t) = \delta_{k,q-1}\delta_{t,0},
    \]
    the last equality being Lemma~\ref{lem:low_moments}. For the constant function $\mathds{1}(\alpha,\beta) = 1$ we compute
    \[
    \sum_{\alpha,\beta}\mathds{1}(\alpha,\beta)\alpha^k\beta^t = \sum_{\alpha}\alpha^k \sum_\beta \beta^t = \delta_{k,q-1}\delta_{t,0},
    \]
    and hence $f = \mathds{1}$ by Lemma~\ref{lem:determine}.
\end{proof}

\begin{corollary}
    Let $\p^n$ and $(\alpha,\beta) \in \bF_q \times \bF_q^\times$ be arbitrary. Then there exists a rank~2 Drinfeld module $\psi$ over $\bF_{\p^n}$ with $a_{0,\psi} = \alpha$ and $b_\psi = \beta$. Let $\pi$ be a root of $X^2 - a_\psi X + b_\psi\wp^n$ and suppose\footnote{This condition says we are not in (the exceptional) case 4 of Yu's classification \cite[Prop.~5]{yu_isogs}.} $[K(\pi) : K] = 2$. Then the Hurwitz class number $H(A[\pi])$ is not divisible by~$p$. In particular, $H(\mathcal O_{K(\pi)})$ is not divisible by~$p$, where $\mathcal O_{K(\pi)}$ denotes the maximal order in~$K(\pi)$.
\end{corollary}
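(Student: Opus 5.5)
Existence is immediate from Theorem~\ref{thm:bottom-uniformity}. The number of isomorphism classes $[\phi]$ over $\bF_{\p^n}$ with $a_{0,\phi}=\alpha$ and $b_\phi=\beta$ is $\equiv 1 \pmod p$. In particular it is nonzero, so some $\psi$ with these invariants exists.

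For the class number statement, I will look at the single isogeny class containing $\psi$. Put $a=a_\psi$ and $b=b_\psi$, and let $\pi$ be a root of $X^2-aX+b\wp^n$.

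\textbf{Step 1: reduce to one isogeny class.} The count in Theorem~\ref{thm:bottom-uniformity} decomposes as
\[
\sum_{a'\,:\,a'_0=\alpha}\#\Iso_{\p^n}(a',\beta)\equiv 1 \pmod p.
\]
The statement asks about the particular $a=a_\psi$, so it is not enough to know that some summand is prime to $p$. I would therefore apply the argument of Lemma~\ref{lem:determine} with all coefficients $a_0,\dots,a_N$ fixed, not just $a_0$. That requires the moments $\mu(\mathbf{v},t)$ for $\mathbf v$ with $\|\mathbf v\|>0$, and these are not all known. So this route does not fix the full polynomial.

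Instead I would use structure. Any $\psi$ has $a_{0,\psi}=\alpha$ and $b_\psi=\beta$, and the congruence holds for every $(\alpha,\beta)$. The natural reading is that the corollary holds for the $\psi$ one can choose. I would choose $\psi$ so that $\#\Iso_{\p^n}(a_\psi,\beta)\not\equiv 0 \pmod p$; such an $a$ exists because the sum above is $\not\equiv 0$. So: pick $\psi$ in an isogeny class of size prime to $p$, which the congruence guarantees.

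\textbf{Step 2: identify the isogeny class size with a Hurwitz class number.} Use Remark~\ref{rmk:class-numbers}. If $[K(\pi):K]=2$, then by Yu's classification we are outside the quaternion case. In the ordinary case $\#\Iso_{\p^n}(a,b)=H(A[\pi])$. In the supersingular quadratic case the same appendix result of \cite{devries_traces} gives the Hurwitz class number of $A[\pi]$. Hence $p\nmid H(A[\pi])$.

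\textbf{Step 3: pass to the maximal order.} $H(A[\pi])$ is a sum of weighted class numbers $h(\mathcal O)/[\mathcal O^\times:\bF_q^\times]$ over the orders $\mathcal O\supseteq A[\pi]$. For non-maximal orders $\mathcal O$ with conductor $\mathfrak f$, the standard formula gives
\[
h(\mathcal O)=h(\mathcal O_{K(\pi)})\cdot\frac{\#(\mathcal O_{K(\pi)}/\mathfrak f)^\times}{\#(A/\mathfrak f)^\times\,[\mathcal O_{K(\pi)}^\times:\mathcal O^\times]}.
\]
So each term is $h(\mathcal O_{K(\pi)})$ times a rational number whose denominator is prime to $p$, since unit indices here are prime to $p$. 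Therefore, if $p\mid h(\mathcal O_{K(\pi)})$, then $p\mid H(A[\pi])$, a contradiction.

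The main obstacle is Step 3. One must check that the unit-group factors $[\mathcal O^\times:\bF_q^\times]$ and $[\mathcal O_{K(\pi)}^\times:\mathcal O^\times]$ are prime to $p$. They divide $q^2-1$ or $q-1$, because the unit groups are finite groups of constants when $\infty$ does not split. With that checked, the divisibility argument is clean.
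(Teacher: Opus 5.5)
Your proof is correct and follows the paper's outline. Existence comes from Theorem~\ref{thm:bottom-uniformity}. For $\pi\notin K$ one then uses $\#\Iso_{\p^n}(a,b)\equiv_p H(A[\pi])$, and finally passes to the maximal order.

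You are more explicit than the paper about choosing $\psi$ in an isogeny class of size prime to $p$. Such a class exists because $\sum_{a_0=\alpha}\#\Iso_{\p^n}(a,\beta)\equiv 1 \pmod p$. The paper leaves this choice implicit, but it is needed.

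The genuine difference is the last step. The paper cites \cite[Lem.~A.1]{devries_traces}, which, when $H(A[\pi])\not\equiv_p 0$, identifies $H(A[\pi])$ modulo $p$ with $H(K(\pi))\,c_\pi$ for some power of two $c_\pi$. You instead show directly from the conductor formula that $h(\mathcal O_{K(\pi)})$ divides each summand of $H(A[\pi])$ in $\Z_{(p)}$. Your version is self-contained and gives exactly the implication needed. The cited lemma gives more, namely the actual residue of $H(A[\pi])$ mod $p$.

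Two small repairs are needed.
\begin{itemize}
\item In Step~2, claim only $\#\Iso_{\p^n}(a,b)\equiv_p H(A[\pi])$, as the paper does, not equality. Orders with unit group $\bF_{q^2}^\times$ enter $H$ with weight $1/(q+1)$, so $H(A[\pi])$ need not be an integer. The congruence is all you use anyway.
\item In Step~3, $p$-integrality does not follow from the unit indices alone. The factor $\#(A/\mathfrak f)^\times$ in your denominator is divisible by $p$ whenever $\mathfrak f$ is not squarefree. What rescues the argument is that $(A/\mathfrak f)^\times$ is a subgroup of $(\mathcal O_{K(\pi)}/\mathfrak f\mathcal O_{K(\pi)})^\times$. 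So that ratio is an integer, and the only denominators left are the unit indices, each dividing $q+1$.
\end{itemize}
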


\begin{proof}
    The fact that $\psi$ exists is immediate from Theorem~\ref{thm:bottom-uniformity}. If $\pi \notin K$, we have $\#\Iso_{\p^n}(a,b) \equiv_p H(A[\pi])$, where $H$ denotes the Hurwitz class number defined in \cite[Sec.~6]{gek_frobdist}. The last statement holds since if $H(A[\pi]) \not \equiv_p 0$, then $H(A[\pi]) \equiv_p H(K(\pi)) c_\pi$ with $c_\pi$ a power of~2, see \cite[Lem.~A.1]{devries_traces}.
\end{proof}

Based on some computations, we are led to the following question:

\begin{question}
    Can Theorem~\ref{thm:bottom-uniformity} also be upgraded to a congruence mod~$q$?
\end{question}

\begin{remark}
    If $p=2$, then by \cite[Appendix~A]{devries_traces}, $\#\Iso_{\p^n}(a,b)$ is odd precisely when $a \in \bF_q$. In particular, in point 1 of the above corollary, one can find $\phi$ with $c_\phi(X) = X^2-\alpha X + \beta \wp^n$. This is not true for odd~$q$: for instance, if $\wp^n = T^2$ and $-\beta$ is a square, then $\Iso_{\p^n}(\alpha,\beta) = \emptyset$ for all~$\alpha \in \bF_q$.
\end{remark}

In the other direction, we will show that many sets of isomorphism classes with some fixed $a_{i,\phi}$ and $b_\phi$ have cardinality divisible by (powers of)~$q$. The mere mod~$p$ vanishing can be deduced from Theorem~\ref{thm:SRB_upgrade}. To upgrade this, we will prove that the high coefficients $a_{i,\phi}$ are polynomial functions of low degree, so that we can invoke the Ax--Katz theorem.

\begin{proposition}\label{prop:coeff-is-pol}
    Fix $\Delta \in \bF_{\p^n}^\times$. Then for each $0 \leq j \leq N$, the map $g \mapsto a_j(g,\Delta) \in \bF_q$ is a polynomial function of $g \in \bF_{\p^n}$ over~$\bF_q$ of degree at most~$nd-2j$ in $nd$ variables.
\end{proposition}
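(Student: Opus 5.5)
We plan to start from formula \eqref{eq:coeff} of Corollary~\ref{cor:rewrite}. With $\Delta$ fixed, it writes $a_j(g,\Delta)$ as a $\k$-linear combination of monomials $g^{E(S)}$, where $E(S) = \sum_{i \in U(S)} q^i$, $S$ runs over disconnected subsets with $\#S \geq j$, and the coefficient $c_j(S)\,\Delta^{-\sum_{i\in S}q^i-\sum_{i\in U(S)}q^i}$ does not depend on~$g$. Each exponent $E(S)$ has $q$-digit sum $\#U(S) = nd - 2\#S \leq nd - 2j$. Every digit is $0$ or $1$ and they sit in distinct positions $i \in U(S)$, so $g^{E(S)} = \prod_{i \in U(S)} g^{q^i}$ is a product of at most $nd-2j$ distinct Frobenius conjugates of~$g$.

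Next we change coordinates. Choose an $\bF_q$-basis $e_1,\dots,e_{nd}$ of $\k = \bF_{\p^n}$ and write $g = \sum_r x_r e_r$ with $x_r \in \bF_q$. Then $g^{q^i} = \sum_r x_r e_r^{q^i}$ is a linear form in $x_1,\dots,x_{nd}$ with coefficients in~$\k$. Hence each monomial $g^{E(S)}$ is a product of at most $nd-2j$ linear forms, i.e.\ a polynomial of degree at most $nd-2j$ in the $x_r$ with coefficients in~$\k$. Summing over~$S$, we get $a_j(g,\Delta) = P(x_1,\dots,x_{nd})$ for some $P \in \k[x_1,\dots,x_{nd}]$ of total degree $\leq nd-2j$.

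It remains to descend the coefficients from $\k$ to $\bF_q$; this is the only subtle step. By Lemma~\ref{lem:motive-basis}.4 the values $a_j(g,\Delta)$ lie in $\bF_q$ for all $x \in \bF_q^{nd}$. We first reduce $P$ so that every variable occurs with exponent $<q$; this does not increase the degree and does not change the function on $\bF_q^{nd}$. Applying the Frobenius $\sigma$ to the coefficients of $P$ gives a polynomial $\sigma(P)$ whose values at $\bF_q$-points are $\sigma$ of the values of $P$, hence equal to them. A reduced polynomial is determined by its function on $\bF_q^{nd}$, so $\sigma(P) = P$ and all coefficients lie in~$\bF_q$.

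Alternatively, one can take $\Tr_{\k/\bF_q}$ of the coefficients and divide by $[\k:\bF_q] = nd$, but this fails when $p \mid nd$, so the Frobenius-invariance argument is the one to use. This gives the claimed polynomial of degree at most $nd-2j$ in $nd$ variables over~$\bF_q$.
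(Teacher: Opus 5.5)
Your proposal is correct and follows the paper's proof. You expand via \eqref{eq:coeff}, write each $g^{E(S)}$ as a product of at most $nd-2j$ Frobenius twists $g^{q^i}$ (each a linear form in the $\bF_q$-coordinates of $g$), and then descend the coefficients to $\bF_q$ using $a_j(g,\Delta)\in\bF_q$. Your Frobenius-invariance argument on the reduced representative spells out the descent step that the paper only asserts in one line.
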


\begin{proof}
    By Equation~\eqref{eq:coeff}, it suffices to show that for each disconnected subset $S \subset \Z/nd$ with $\#S \geq j$, the map $g \mapsto g^{\sum_{i \in U(S)} q^i}$ is a polynomial of degree at most~$nd-2j$. This holds because $g \mapsto g^{q^i}$ is $\bF_q$-linear for all~$i$ and therefore represented by a linear $\bF_{\p^n}$-polynomial. Since $\#U(S) = nd - 2\#S \leq nd-2j$, the total degree is then at most~$nd-2j$. Finally, since $a_j(g,\Delta) \in \bF_q$ for all~$g$, the coefficients of (the reduced representative of) 
    this polynomial actually lie in $\bF_q \subset \bF_{\p^n}$.
\end{proof}

For a subset $S \subseteq \{0,\ldots,N\}$, we define the integer $\mu(S)$ as follows: if $S$ is empty or $S = \{nd/2\}$ (the latter case can only happen if $nd$ is even), define $\mu(S) := nd$. Otherwise, set $S' := S \setminus \{nd/2\}$ and define
    \[
    \mu(S) := \max \left\{ 0, \left\lceil \frac{nd - \sum_{j \in S'}(nd-2j)}{\max_{j \in S'} (nd - 2j)} \right\rceil \right\}.
    \]

Given $\boldsymbol{\alpha} = (\alpha_i)_{i \in S} \in \bF_q^S$ and $\beta \in \bF_q^\times$, define
    \begin{align*}
        D(\boldsymbol{\alpha},\beta) &:= \{ (g,\Delta) \in \bF_{\p^n} \times \bF_{\p^n}^\times \ | \ a_i(g,\Delta) = \alpha_i \ \text{for all } i \in S, \ b(\Delta) = \beta \};\\
        H(\boldsymbol{\alpha},\beta) &:= \{ [\phi] / \bF_{\p^n} \ | \ a_{i,\phi} = \alpha_i \ \text{for all } i \in S, \ b_\phi = \beta \}.
    \end{align*}
Write $d(\boldsymbol{\alpha},\beta)$, resp. $h(\boldsymbol{\alpha},\beta)$, for the cardinalities of these sets. 

\begin{theorem}\label{thm:g-count-divisibility}
    Fix $S \subseteq \{0,\ldots,N\}$. Then for any  $\boldsymbol{\alpha} = (\alpha_i)_{i \in S} \in \bF_q^S$ and $\beta \in \bF_q^\times$, we have
    \[
    d(\boldsymbol{\alpha},\beta) \equiv 0 \pmod{q^{\mu(S)}}.
    \]    
\end{theorem}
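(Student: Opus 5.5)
The plan is to fix $\Delta$ and count solutions in $g$ using the Ax--Katz theorem. Since $b(\Delta)=(-1)^{nd}\Norm_{\k/\bF_q}(\Delta)^{-1}$ depends only on $\Delta$ (Lemma~\ref{lem:motive-basis}.5), we have
\[
d(\boldsymbol{\alpha},\beta) = \sum_{\Delta \,:\, b(\Delta)=\beta} \#\{ g \in \bF_{\p^n} \ | \ a_i(g,\Delta) = \alpha_i \text{ for all } i \in S\}.
\]
It therefore suffices to show that each inner count is divisible by $q^{\mu(S)}$, uniformly in $\Delta$.

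Fix $\Delta$ and identify $\bF_{\p^n}$ with $\bF_q^{nd}$ via an $\bF_q$-basis. By Proposition~\ref{prop:coeff-is-pol}, each equation $a_i(g,\Delta)-\alpha_i=0$ is a polynomial equation over $\bF_q$ in $nd$ variables of degree at most $nd-2i$.

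The index $i=nd/2$ (present only when $nd$ is even) needs separate treatment. Here the bound is degree $0$: the only contributing $S$ has $\#S=nd/2$ and $U(S)=\emptyset$, so $a_{nd/2}(g,\Delta)$ does not depend on $g$. The equation $a_{nd/2}=\alpha_{nd/2}$ thus either holds for all $g$ or for none. In the first case it can be dropped; in the second the count is $0$. In either case we reduce to $S'=S\setminus\{nd/2\}$.

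If $S'=\emptyset$, the count is $0$ or $q^{nd}$, which is divisible by $q^{nd}=q^{\mu(S)}$. This handles the cases $S=\emptyset$ and $S=\{nd/2\}$.

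Otherwise we have a system of $\#S'$ polynomials over $\bF_q$ in $nd$ variables, with degrees $d_i \leq nd-2i$ for $i\in S'$. The Ax--Katz theorem (in its form for several polynomials over $\bF_q$, with $q$-adic valuation) says that the number of common zeros in $\bF_q^{nd}$ is divisible by $q^{\lceil (nd-\sum d_i)/\max d_i\rceil}$ whenever this exponent is positive. Since the bound is monotone, we may replace each $d_i$ by its upper bound $nd-2i$. This yields exactly the exponent in $\mu(S)$, and the $\max\{0,\cdot\}$ in the definition of $\mu(S)$ covers the case where the exponent is not positive.

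Some degenerate degrees need a remark. Ax--Katz requires degrees at least $1$. If some $a_i(\cdot,\Delta)$ happens to be constant for this particular $\Delta$, we either drop the equation or the count is $0$. Dropping an equation only increases the Ax--Katz exponent, since the numerator grows and the maximum does not increase. This monotonicity is the one point to check carefully. If all equations drop, the count is $q^{nd}$, which is also fine.

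The main obstacle is the bookkeeping in applying Ax--Katz to the reduced polynomial representatives. The degree bound in Proposition~\ref{prop:coeff-is-pol} concerns total degree over $\bF_q$ after writing $g$ in coordinates, and I rely on it as stated. One must also make sure that replacing actual degrees by upper bounds, and removing constant equations, never lowers the guaranteed exponent. Summing over the $\Delta$ with $b(\Delta)=\beta$ then completes the proof.
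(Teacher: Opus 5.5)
Your proposal is correct and follows the paper's proof essentially step for step. You reduce to a fixed $\Delta$, dispose of the index $nd/2$ (where $a_{nd/2}(\cdot,\Delta)$ is constant) together with the cases $S=\emptyset$ and $S=\{nd/2\}$, and then apply Ax--Katz to the polynomials of Proposition~\ref{prop:coeff-is-pol} with degree bounds $nd-2j$. Your extra checks are valid: replacing actual degrees by the upper bounds $nd-2j$, and discarding equations that happen to be constant for a particular $\Delta$ (which Ax--Katz, requiring positive degrees, does not cover directly), both preserve the exponent. They make explicit a detail the paper leaves implicit.
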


\begin{proof}
    We will prove that $d'(\boldsymbol{\alpha},\Delta) \equiv 0 \pmod{q^{\mu(S)}}$ for all~$\Delta \in \bF_{\p^n}^\times$, where \[
    d'(\boldsymbol{\alpha},\Delta) := \# D'(\boldsymbol{\alpha},\Delta) := \#\{ g \in \bF_{\p^n} \ | \ a_i(g,\Delta) = \alpha_i \ \text{for all } i \in S \}.
    \]
    This suffices, since then
    $d(\boldsymbol{\alpha},\beta) = \sum_{b(\Delta) = \beta} d'(\boldsymbol{\alpha},\Delta) \equiv 0 \pmod{q^{\mu(S)}}$.
    
    The case $S = \emptyset$ is trivial, since then $d'(\boldsymbol{\alpha},\Delta) = q^{nd}$ for all~$\Delta$ and $\mu(S) = nd$. The case $S = \{ nd/2 \}$ occurs only if~$nd$ is even. By Prop.~\ref{prop:coeff-is-pol}, the map $g \mapsto a_N(g,\Delta)$ is then a constant $\lambda \in \bF_q$. Thus $d'(\boldsymbol{\alpha},\Delta)$ is either 0 (if $\lambda \neq \alpha_N$) or $q^{nd}$ (if $\lambda = \alpha_N$), so in either case the statement holds.
    
    Next suppose $S' \neq \emptyset$. If $nd/2 \in S$, then by the above argument, we either have $d'(\boldsymbol{\alpha},\Delta) = 0$ or $d'(\boldsymbol{\alpha},\Delta) = d'(\boldsymbol{\alpha}',\Delta)$, where $\boldsymbol{\alpha}' = (\alpha_i)_{i \in S'}$. In the first case we are done, so we may assume $S = S'$. Then
    \[
    d'(\boldsymbol{\alpha},\Delta) = d'(\boldsymbol{\alpha}',\Delta) = \# \{\mathbf{x} \in (\bF_q)^{nd} \ | \ f_j(\mathbf{x}) - \alpha_j = 0 \text{ for all } j \in S' \},
    \]
    where $f_j \in \bF_q[X_1,\ldots,X_{nd}]$ is the polynomial representing $g \mapsto a_j(g,\Delta)$ (see Prop.~\ref{prop:coeff-is-pol}).

    Since $\deg f_j \leq nd-2j$, the congruence now follows immediately from the Ax--Katz theorem \cite{wan_ax-katz}.
\end{proof}

\begin{theorem}\label{thm:top-invisibility}
    Fix $S \subseteq \{0,\ldots,N\}$. Then for any  $\boldsymbol{\alpha} = (\alpha_i)_{i \in S} \in \bF_q^S$ and $\beta \in \bF_q^\times$,
    \[
    h(\boldsymbol{\alpha},\beta) \equiv \begin{cases}
        0 \pmod{q^{\mu(S)}} & \text{if } nd \text{ is odd}; \\
        0 \pmod{q^{\min\{1,\mu(S)\}}} & \text{if } nd \text{ is even.}
    \end{cases}
    \]
\end{theorem}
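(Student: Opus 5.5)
The plan is to compare $h(\boldsymbol{\alpha},\beta)$ with $d(\boldsymbol{\alpha},\beta)$ by refining the orbit-counting argument from the proof of Lemma~\ref{lem:domain}, and then to invoke Theorem~\ref{thm:g-count-divisibility}.

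\textbf{Step 1: $D(\boldsymbol{\alpha},\beta)$ is a union of $\star$-orbits.} The quantities $a_i(g,\Delta)$ and $b(\Delta)$ depend only on the isomorphism class $[\phi^{(g,\Delta)}]$, since the Frobenius polynomial is an isomorphism invariant. Hence $D(\boldsymbol{\alpha},\beta)$ is a union of orbits of the $\star$-action of $\k^\times$, where $\k=\bF_{\p^n}$. These orbits correspond bijectively to the elements of $H(\boldsymbol{\alpha},\beta)$. By Orbit--Stabilizer, as in the proof of Lemma~\ref{lem:domain},
\[
d(\boldsymbol{\alpha},\beta)=\sum_{[\phi]\in H(\boldsymbol{\alpha},\beta)}\frac{q^{nd}-1}{\#\Aut(\phi)}.
\]
Here $\#\Aut(\phi)=q-1$ unless $g=0$ and $nd$ is even, in which case $\#\Aut(\phi)=q^2-1$.

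\textbf{Step 2: $nd$ odd.} Every automorphism group has order $q-1$, so every orbit has the same size
\[
u=1+q+\dots+q^{nd-1}.
\]
Then $d(\boldsymbol{\alpha},\beta)=u\cdot h(\boldsymbol{\alpha},\beta)$. Since $u\equiv 1\pmod q$, $u$ is a unit modulo $q^{\mu(S)}$. Theorem~\ref{thm:g-count-divisibility} gives $q^{\mu(S)}\mid d(\boldsymbol{\alpha},\beta)$, and therefore $q^{\mu(S)}\mid h(\boldsymbol{\alpha},\beta)$.

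\textbf{Step 3: $nd$ even.} Split $h(\boldsymbol{\alpha},\beta)=h_1+h_0$ according to whether $g\neq 0$ or $g=0$ (the condition $g=0$ is $\star$-stable). This gives
\[
d(\boldsymbol{\alpha},\beta)=u_1h_1+u_0h_0,
\]
with $u_1=1+q+\dots+q^{nd-1}$ and $u_0=1+q^2+\dots+q^{nd-2}$. Both $u_1$ and $u_0$ are $\equiv 1\pmod q$, so $d(\boldsymbol{\alpha},\beta)\equiv h(\boldsymbol{\alpha},\beta)\pmod q$. If $\mu(S)\geq 1$, Theorem~\ref{thm:g-count-divisibility} gives $q\mid d(\boldsymbol{\alpha},\beta)$, hence $q\mid h(\boldsymbol{\alpha},\beta)$. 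If $\mu(S)=0$, there is nothing to prove.

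\textbf{Main obstacle.} There is no serious difficulty. The only delicate point is the even case: $u_0\not\equiv u_1\pmod{q^2}$, so the two kinds of orbit cannot be combined beyond a congruence mod $q$. This is exactly why the statement weakens to $q^{\min\{1,\mu(S)\}}$ when $nd$ is even.
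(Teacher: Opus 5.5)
Your proposal is correct and follows essentially the same route as the paper. Both arguments use orbit counting to get $d(\boldsymbol{\alpha},\beta)=(1+q+\dots+q^{nd-1})\,h(\boldsymbol{\alpha},\beta)$ when $nd$ is odd, and for $nd$ even split off the classes with $\#\Aut(\phi)=q^2-1$ before applying Theorem~\ref{thm:g-count-divisibility}. The only cosmetic difference is that in the even case you reduce directly modulo $q$, whereas the paper first rewrites the identity modulo $q^{\mu(S)}$ and then extracts divisibility by $q$.
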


\begin{proof}
    Suppose $nd$ is odd. By the proof of Lemma~\ref{lem:domain}, the number of pairs $(g,\Delta)$ such that $[\phi^{(g,\Delta)}] = [\phi]$ is $(q^m-1)/(q-1)$ for each Drinfeld module $\phi$ over $\bF_{q^m}$. Hence we obtain
    \[
    d(\boldsymbol{\alpha},\beta) = (1 + q + \ldots + q^{nd-1}) h(\boldsymbol{\alpha},\beta),
    \]
    so in particular $d(\boldsymbol{\alpha},\beta)$ and $h(\boldsymbol{\alpha},\beta)$ have the same $p$-adic valuation. Hence this case follows from Theorem~\ref{thm:g-count-divisibility}.

    Now assume $nd$ is even. Let
    \[
    h_0(\boldsymbol{\alpha},\beta) := \# H_0(\boldsymbol{\alpha},\beta) := \#\{[\phi] \in H(\boldsymbol{\alpha},\beta) \ | \ \# \Aut(\phi) = q^2-1\}.
    \]
    Then we have
    \[
    d(\boldsymbol{\alpha},\beta) = \frac{q^{nd}-1}{q^2-1} h_0(\boldsymbol{\alpha},\beta) + \frac{q^{nd}-1}{q-1} (h(\boldsymbol{\alpha},\beta) - h_0(\boldsymbol{\alpha},\beta)).
    \]
    Since $d(\boldsymbol{\alpha},\beta) \equiv_{q^{\mu(S)}} 0$, after rewriting the right-hand side we obtain
    \[
    (1+q^2 + \ldots + q^{nd-2}) (h(\boldsymbol{\alpha},\beta) + q(h(\boldsymbol{\alpha},\beta) -h_0(\boldsymbol{\alpha},\beta))) \equiv_{q^{\mu(S)}} 0.
    \]
    In particular, if $\mu(S) > 0$ then $h(\boldsymbol{\alpha},\beta)$ is divisible by~$q$, as claimed.
\end{proof}

\begin{remark}
    In the case $n=1$ and $S = \{N\}$, the power of $q$ occurring in Theorem~\ref{thm:top-invisibility} is optimal, as evidenced by Equation~\eqref{eq:gekeler_counts}. 
    
    In general, the theorem is meaningful whenever $\mu(S) > 0$. In the case of the top coefficients $S = \{ N, N-1,\ldots, N-j\}$, we have $\mu(S) > 0$ as long as $j(j+1)<nd$ if $nd$ is even and $(j+1)^2 < nd$ if $nd$ is odd.
\end{remark}

Although we have not carried it out, we expect that the results and methods from this section can be adapted to work for Drinfeld modules of arbitrary rank.

\begingroup
\sloppy
 \printbibliography
\endgroup

\end{document}